\pgfplotsset{width=10cm,compat=1.9}
\tikzset{%
  >=latex, 
  inner sep=0pt,%
  outer sep=2pt,%
  mark coordinate/.style={inner sep=0pt,outer sep=0pt,minimum size=3pt,
    fill=black,circle}%
}
\tikzset{>=latex} 
\colorlet{myblue}{blue!65!black}
\colorlet{mydarkblue}{blue!50!black}
\colorlet{myred}{red!65!black}
\colorlet{mydarkred}{red!40!black}
\colorlet{veccol}{green!70!black}
\colorlet{vcol}{green!70!black}
\colorlet{xcol}{blue!85!black}
\tikzstyle{vector}=[->,very thick,xcol,line cap=round]
\tikzstyle{xline}=[myblue,very thick]
\tikzstyle{yzp}=[canvas is zy plane at x=0]
\tikzstyle{xzp}=[canvas is xz plane at y=0]
\tikzstyle{xyp}=[canvas is xy plane at z=0]
\tikzset{mynode/.style={inner sep=2pt,fill,outer sep=0,circle}}
\newcommand{\setN}{\mathds N}
\newcommand{\R}{\mathds R}
\tikzset
  {marking1/.style=
     {decoration=
       {markings,mark=at position 0.6 with {\arrow[line width=1pt]{to}}},
      postaction=decorate
     },
   marking1r/.style=
     {decoration=
        {markings,
         mark=at position 0.6 with {\arrow[line width=1pt,xscale=-1]{to}}
        },
      postaction=decorate
     },
   marking2/.style=
     {decoration=
        {markings,
         mark=at position 0.18 with {\arrow[line width=1pt]{to}},
           },
      postaction=decorate
     }
  }
\def\eps{\varepsilon}
\def\e{{\rm e}}
\newcommand{\epsi}{\eps}
\tikzset{
    arc arrow/.style args={%
    to pos #1 with length #2}{
    decoration={
        markings,
         mark=at position 0 with {\pgfextra{%
         \pgfmathsetmacro{\tmpArrowTime}{#2/(\pgfdecoratedpathlength)}
         \xdef\tmpArrowTime{\tmpArrowTime}}},
        mark=at position {#1-\tmpArrowTime} with {\coordinate(@1);},
        mark=at position {#1-2*\tmpArrowTime/3} with {\coordinate(@2);},
        mark=at position {#1-\tmpArrowTime/3} with {\coordinate(@3);},
        mark=at position {#1} with {\coordinate(@4);
        \draw[-{Stealth[length=#2,bend]}]
        (@1) .. controls (@2) and (@3) .. (@4);},
        },
     postaction=decorate,
     }
}
\newcommand\numberthis{\addtocounter{equation}{1}\tag{\theequation}}
\newtheorem{theorem}{Theorem}[section]
\newtheorem{lemma}[theorem]{Lemma}
\newtheorem{corollary}[theorem]{Corollary}
\newtheorem{proposition}[theorem]{Proposition}
\theoremstyle{definition}
\newtheorem{definition}[theorem]{Definition}
\numberwithin{equation}{section}
\theoremstyle{definition}
\def\eps{\varepsilon}
\def\e{{\rm e}}
\theoremstyle{remark}
\renewcommand{\d}{\mathrm{d}}
\begin{document}
\title[Fractional diffusion as the limit of a Rayleigh gas]{Fractional diffusion as the limit of a short range potential Rayleigh gas}
\author{Karsten Matthies \and  Theodora Syntaka*}
\address{University of Bath,
Department of Mathematical Sciences,
Bath,
BA2 7AY,
United Kingdom}
\email{k.matthies@bath.ac.uk; theodora.syntaka@bath.edu}

\keywords{}

\begin{abstract}
 The  fractional diffusion equation is rigorously  derived as a scaling limit from a deterministic Rayleigh gas, where
 particles interact via short range potentials with support of size $\varepsilon$ and the background is distributed in space $\R^3$ according to a Poisson process with intensity $N$ and in velocity according to some fat-tailed distribution. As an intermediate step a  linear Boltzmann equation is obtained  in the Boltzmann-Grad limit as $\varepsilon$ tends to zero and $N$ tends to infinity with $N \varepsilon^2 =c$.   The convergence of the empiric particle dynamics to the Boltzmann-type dynamics is shown using semigroup methods to describe probability measures on collision trees associated to physical trajectories in the case of a Rayleigh gas. The fractional diffusion equation is a hydrodynamic limit  for times $t \in [0,T]$,  where $T$ and inverse mean free path $c$ can  both be chosen as some negative rational power $\varepsilon^{-k}$.
\end{abstract}

\maketitle
\tableofcontents
*Corresponding author: Theodora Syntaka, theodora.syntaka@bath.edu
\section{Introduction}

A fundamental problem  is the relation of continuum descriptions of matter on the macroscopic level --such as the Navier-Stokes or diffusion equations-- to the microscopic description of matter  by deterministic particle systems. 
For ideal  gases  Boltzmann introduced a mesoscopic level of description in the form of  kinetic theory, 
which provides probability distributions for the particles in their phase space. These models of the physical system can have contrasting properties on different scales. Typical macroscopic and kinetic equations make the system irreversible in time, while in the microscopic level the system is described by time-reversible Newtonian mechanics. The general area can be understood as modern interpretation of Hilbert's Sixth Problem. 

The full derivation from micro  to kinetic and to other macro dynamics with or without  using kinetic equations as an intermediate step, is still an open question for many equations. 
The rigorous derivation of the Boltzmann equation from a system of particles was first obtained by Lanford \cite{lanford75}, in the case of hard spheres and by King \cite{King1975} for more general potentials using the BBGKY hierarchy. The convergence in these results is valid for short times, i.e. in a time interval $[0,T']$, where $T'$ is a fraction to the free flight time. Complete proofs are given in \cite{Gallagher2013, pul13}. Illner and Pulvirenti \cite{Illner1986, Illner1989} obtained global in time results of convergence, when the initial density is small and the positions are in $\mathds{R}^d$. There has been recent, substantial progress in the parallel problem of deriving the kinetic wave equation for the interaction of waves within nonlinear Schr\"odinger equations
\cite{Lukkarinen2011,Deng2021,Deng2023,Deng2023a}. Recently, Hani, Deng and Ma announced substantial progress for the long-term derivation of the Boltzmann equation \cite{deng2024longtimederivationboltzmann} and then for derivation of Euler and Navier-Stokes equations \cite{deng2025hilbertssixthproblemderivation} using the Boltzmann equation as an intermediate step. 

Building on \cite{Gallagher2013, pul13} fluctuations around the Maxwellian equilibrium of the Boltzmann equation can be understood and used to derive macroscopic equations and processes. Bodineau, Gallagher and Saint-Raymond  \cite{MR3455156} obtain Brownian motion as the limit of a deterministic system of hard-spheres for arbitrary large times and for an initial distribution which is close to the equilibrium state of a tagged particle. 
The Ornstein-Uhlenbeck process is derived for a similar setting in \cite{Bodineau2018}. Further descriptions  for fluctuations around the equilibrium of the Boltzmann equation are provided in \cite{Bodineau2023a, Bodineau2023}.

In linear settings, variants of the Boltzmann equation can be justified for arbitrary long times, see e.g. \cite{spohn91} and references therein. Even in the nonlinear setting  of \cite{MR3455156} the authors  use the linear Boltzmann equation as an intermediate step, to pass to the Brownian motion as the limit of a deterministic system of hard-spheres, as a corollary a diffusion equation for the macroscopic distribution is obtained. The time-scale for convergence is of the order of $(\log \log N)^{-1}$. In this paper we will consider a linear setting of a tagged particle interacting with a background in a variant of the Rayleigh gas to obtain linear Boltzmann equations and then other macroscopic behaviour. A related recent result deriving diffusive behaviour of the tagged particle in a Rayleigh gas particle model with a Maxwellian background is provided in \cite{fougeres2024derivation}. In this article, Foug\`eres managed to find a rate of convergence which is of the order of $\exp ( -c_{\beta}  | \log N|^{1-\alpha} )$, $\forall \alpha >0$, with $N$ be the number of particles, for times of the order $(\log|c_{\beta} \log \varepsilon| )^{\frac{1}{2}-\alpha}$ using the BBGKY hierarchy approach. This rate improves the one of \cite{MR3455156} which is of the order $(\log \log N)^{-1}$. In \cite{MatthiesSyntaka2024surrey}, we show, starting also from a Rayleigh gas particle system with a Maxwellian background, which takes advantage of semigroup techniques as in \cite{Matthies2018} to study the evolution of collision trees instead of the BBGKY hierarchy, that the rate of convergence of the distribution of the tagged particle to the solution of the linear Boltzmann equation is of the order $\varepsilon^{\alpha}$, with $\alpha \in (0,\frac{1}{8})$ for time scales that are proportional to some negative power of $\varepsilon$.

The present paper provides a first result of deriving a \emph{fractional diffusion} equation as the limit of a deterministic particle system again on time scales that are proportional to some negative power of $\varepsilon$. Here, we are giving a specific example of short range potential Rayleigh gas and with a specific fat-tailed background, using a linear Boltzmann equation as an intermediate step.

\subsection*{Particle dynamics}
We start by describing the particle dynamics. A wide, relevant class are systems with short range potentials where the particles carry a force that affects only nearby particles, up to some distance proportional to $\varepsilon$. Our current work focuses on specific short range potential models. When the particles interact via a short range potential, then typically only two particles are interacting.  
Thus the main analysis needs only the description of the collision of $N = 2$ particles. The Hamiltonian equations of motion are given by
\begin{align*}
\frac{d x_i(t)}{dt} = v_i, \quad  m_i \frac{d v_i(t)}{dt} =  - \nabla U_\eps (x_i - x_j),
\end{align*}
where $(x_i (t),v_i(t))\in \R_x^3 \times \R_v^3$ is the position and velocity of particle $i$ at time $t$, for $i = 1, 2$,  where $i \neq j=1,2$,  and $t \ge 0$. Here, $\varepsilon$ is the diameter of the particles, $m_i$ is the mass of the $i$-th particle and we consider the potential function
\begin{align} \label{eqn:U}
   U : \mathds{R}^3 \to \mathds{R}, \ U ( | x| ) :=   \bar{K}(|x|^{1-n}-1)\mathds{1}_{|x|\le 1}
\end{align}
to be the interaction potential for $n\in(2,5]$ and $\bar{K}>0$, which is radial, non-increasing, supported in the unit ball of $\mathds{R}^3$ and goes to zero at $|x|=1$. Since $\bar{K}$ is a positive constant, we can set $\bar{K}=1$ along the article for simplification. The scattering event for each particle can be parametrised by the scattering angle $\Theta$ defined as the angle between ingoing and outgoing relative velocities, see Fig. \ref{Fig1}. See \cite{Gallagher2013,cerci88, pul13} for extended discussions. 

\begin{figure}[h!]
\begin{center}
\begin{tikzpicture}[scale=0.7][
dot/.style={
  fill,
  circle,
  inner sep=1pt
  }
]
\begin{scope}[x=1.5cm,y=1.5cm]

\draw (0,0) circle [radius=2];

     \draw
    ({sqrt(2)},{sqrt(2)}) coordinate (a)
    -- (0,0) coordinate (b)
    -- (     -0.7652, 1.8476     ) coordinate (c)
    pic["{$\Theta$}", draw, ->, angle eccentricity=1.8, angle radius=0.5cm]
    {angle=a--b--c};

       \draw [-] (0,0) -- ( -2, 0 );

      \path[-] (-2.25,0.25) edge [bend right] node[below] {} ( {sqrt(2)},{sqrt(2)+0.35} );

        \draw [-,line width=0.9, color= blue ] (0,0) -- (  -0.7652, 1.8476  );

\end{scope}

\end{tikzpicture}
\end{center}\caption{Two particle interaction with scattering angle $\Theta$.} \label{Fig1}
\end {figure}

Particles are assumed to be small with the size of the support of  the forces parametrised by $\varepsilon$.~Then the particle dynamics are given by
$\nabla U_{\varepsilon} (x) = \frac{1}{\varepsilon} \nabla U (\frac{x}{\varepsilon})$. 
These dynamics preserve overall kinetic energy and momentum. The dynamics and its potential satisfy the conditions in  \cite{Gallagher2013}.


\subsection*{Tagged particle models}

One option to simplify the dynamics --both on the particle level and on the level of the continuum equations--  is to split the system of particles into two kinds of particles, the background and the tagged particles. The tagged particle is single and interacts among a system of background particles which are assumed not to interact among themselves. This can be motivated e.g. for systems that are in equilibrium.

If the background particles are of infinite relative mass to the tagged particle and the background particles are fixed, then this model is known as the Lorentz gas model. Long-term convergence was shown in \cite{spohn78}. For recent progress  about random Lorentz gas see the paper of Lutsko and Toth, \cite{Lutsko2020}. In this work, the authors prove the invariance principle for a random Lorentz-gas particle in three space dimension under the Boltzmann-Grad limit and simultaneous diffusion scaling. They do this by using a coupling of the mechanical trajectory and some controls on the efficiency of this coupling. Another recent result on the Lorentz gas is  by Marklof and Str\"ombergsson \cite{Marklof} which provides a derivation of transport processes from a deterministic Lorentz gas particle system for radial potentials with compact support in the low density limit .

A Rayleigh gas is related to the Lorentz gas, but the background particles are no longer of infinite mass. If the background particles are of equal mass with the tagged particle and the background particles interact only with the tagged particle and not with each other then this model is known as the Rayleigh gas. We consider a tagged particle with initial distribution $f_0$ and $g_0$ the distribution of the background particles, distributed via a spatial homogeneous Poisson process. We index the particles by $i \in \setN$. There is no interaction among the background particles.

In the main part of our analysis we can ignore multiple collision of the tagged particle at the same time. Such events have vanishing probability in the scaling limit. Due to the compact support of $\nabla U_\varepsilon$ the tagged particle will only interact with a finite number of particles with probability $1$.  The equations of motions are given by
\begin{align}
\begin{cases}
\dot{x}_i(t)& =v_i(t)  \text{ for  all } i\in \{0\} \cup \setN \\
    \dot{v}_0(t) &=- \sum_{j \in \setN} \nabla U_\varepsilon (x_0(t)  - x_j(t) )   \\
\dot{v}_j(t)  &=- \nabla U_\varepsilon (x_j(t)  - x_0(t) )  \text{ if }   j\in \setN. 
\end{cases}
 \label{eq:partic}
\end{align}
The initial condition $(x_0,v_0)$ of the tagged particle $0$ is chosen according to some $f_0 \in L^1(\R^3_x \times \R^3_v)$.
The background particles are placed spatially in $\R^3$ according to a Poisson process with intensity $N$, the velocities are chosen independently according to the probability density $g_0 \in  L^1(\R^3_v)$

\subsection*{Linear Boltzmann equations}
In an appropriate scaling limit this leads to a linear Boltzmann equation which is given by
\begin{align}\label{LBE}
\begin{cases}
\partial_t f_t(x,v) + v \cdot \nabla_x f_t(x,v) &= c Q[f_t] (x,v), \\
 \quad  \quad  \quad  \quad  \quad   \quad \ \ \ f_{t=0}(x,v) & = f_0(x,v),
\end{cases}
\end{align}
where $c$ is a parameter which is the inverse of the mean free path of the microscopic particles and it represents the rate of collisions.
The collision operator $Q$ is a linear operator describes the interactions of the particles with the surrounding medium and it is defined by $Q:= Q^+ - Q^-$, where the gain term $Q^+$ is given by
\begin{align*}
Q^+[f_t](x,v)=  \int_{\mathbb{S}^2} \int_{\mathds{R}^3}   f_t(x,v') g_0(\bar{v}')\mathcal{B}(v-\bar{v}, \omega )\, \d \bar{v}\, \d {\omega},
\end{align*}
where the pre-collisional velocities $v'$ and $\bar{v}'$ are given by $v'=v+\omega \cdot (\bar{v} -v)\omega$ and $\bar{v}'=\bar{v}-\omega \cdot (\bar{v} -v)\omega$ respectively and the loss term $Q^-$ is given by
\begin{align*}
Q^-[f_t](x,v)=  f_t(x,v) \int_{\mathbb{S}^2} \int_{\mathds{R}^3} g_0(\bar{v}) \mathcal{B}(v-\bar{v}, \omega )\, \d \bar{v}\,  \d {\omega}.
\end{align*}
Here $\mathcal{B}$ is the collision kernel which is derived from a two particle interaction and has the following form
\begin{align}\label{CK}
\mathcal{B}(v_1-v_2, \Theta)
&=
\frac{2^{\frac{2}{n-1}}}{a^2} \left[ |v_1-v_2|^{ \frac{n-5}{n-1} }
+
\mathcal{O}( |v_1-v_2|^{ \frac{n-7}{n-1} }) \right]  \Theta (\sin \Theta)^{-1},
\end{align}
for $n \in (2,5]$ and  $|v_1 - v_2|  \to \infty$.
Here, $a$ is a constant defined by $a:= \int_{0}^{1} \frac{\d w}{ \sqrt{1-(1-w)^{n-1}}}$, for $n\in (3,5]$. See section \ref{sec:kernel} for the derivation of this formula. Furthermore, we are making the assumption that the initial distribution of the background particles, $g_0: \R^3 \to \R$, is radial, continuous and has the following form
\begin{align}\label{g0}
g_0(v) :=
\begin{cases}
C_1, & |v|\le \bar{R},\\
C_2 |v| ^{-q}, & |v| \ge \bar{R},
\end{cases}
\end{align}
where $C_1, C_2$, $\bar{R}$ are positive, real constants and $q\in (4,\frac{4n}{n-1})$, $n\in (3,5]$ such that $\int_{\R^3} g_0(v) \d v  =1$. Notice that the solutions of the linear Boltzmann equation $\eqref{LBE}$ conserves mass but not energy.
The derivation of \eqref{LBE} is given  in \cite{Matthies2018} for the case of hard sphere dynamics, for finite, fixed times without any error estimates. For the derivation of the linear Boltzmann equation from a Lorentz gas particle system see \cite{Gal69}. Also in \cite{Egginton2017, Desv99} the linear Boltzmann equation \eqref{LBE} is derived from a long range particle evolution. For variants and further details see also \cite{Stone2017, Matthies2018a} and for a related model \cite{Nota2019, DesvR01}. Various scaling limits can be considered for systems with long-range potentials \cite{Nota2021}. For the derivation of the linear Boltzmann equation without cut-off from a particle system where particles interact by a infinite range potential see \cite{Ayi2017}.
\subsection*{Hydrodynamic limits}
There is a widely developed theory to derive continuum equations, such as the Navier-Stokes, Euler and Heat equations as scaling limits from the Boltzmann equation. For some review of methods and the substantial literature see e.g. \cite{cercignani94, Saint-Raymond2009, Slemrod2013, Gallagher2019}. For particular relevance for us is the work  \cite{Mellet2011}, where Mellet, Mischler and Mouhot provide a proof for the convergence of the linear Boltzmann equation to a fractional diffusion equation, where the equilibrium distribution function is a heavy-tailed distribution with infinite variance. For a recent review on the hydrodynamic limit toward Fractional Diffusion equations see \cite{Dechi2023}. We also refer to \cite{CdPG23} for the derivation of the fractional Porous Medium equation as the hydrodynamic limit of a random particle system with long range interactions.
\subsection*{Fractional diffusion equations}
The fractional diffusion equation of order $\gamma <2$ is given by
\begin{align}\label{FDE}
\begin{cases}
 \partial_{\tau} \rho + \kappa (-\Delta _x)^{\frac{\gamma}{2}} \rho &=0 \quad \quad \mathrm{in}\ (0,\infty)\times \mathds{R}^3,\\
\quad \quad \quad \quad \quad \rho (0, \cdot)&= \rho_0 \quad \quad \mathrm{in} \ \mathds{R}^3.
 \end{cases}
 \end{align}
 The function $\rho = \rho (\tau,x) $ represents the density  $\int_{\mathds{R}^3} f(\tau,x,v) \d v$.
 The fractional operator is defined as $(-\Delta _x)^{\frac{\gamma}{2}} \rho := \mathcal{F}^{-1}\big( |k|^{\gamma} \mathcal{F}(\rho)(k) \big)$, with $\mathcal{F}$ the Fourier transform in the space variable, and it is non-local operator since it is defined through the Fourier transform. The fractional operator of the order of $\frac{\gamma}{2} \in (0,1)$ generates a rotationally symmetric $\gamma$-stable L\'{e}vy process, see e.g. \cite{Schilling16} or \cite{Bruce-Grigolini97}. For a reference for the state of the art of the fractional diffusion equations we refer to \cite{Evangelista-Lenzi18}.

In this article, our main theorem concerns the convergence from a deterministic microscopic system to  a fractional diffusion \eqref{FDE} continuum limit. In this microscopic system, the background particles are distributed in position according to a spatial homogeneous Poisson process with intensity $N$ and in velocity according to a fat-tailed distribution $g_0$ as in \eqref{g0}. The derivation of the fractional diffusion equation depends on the interaction of two ingredients: the decay assumptions of this distribution $g_0$ and the specific choice  particle interaction given in \eqref{eqn:U}. 

\begin{figure}[H]
\begin{center}
\begin{tikzpicture}[
inner sep=5mm,
roundnode/.style={circle, draw=green!60, fill=green!5, very thick, minimum size=1cm},
squarednode/.style={rectangle, draw=red!60, fill=yellow!5, very thick, minimum size=1cm},
title/.style={font=\LARGE\scshape,node distance=16pt, text=black!40, inner sep=1cm},
]

 \node[]      (maintopic)                              {};
\node[squarednode,align = center]        (uppercircle)       [above=of maintopic] {\textbf{Microscopic description} \\
Rayleigh gas particle system };
\node[squarednode,align = center ]      (rightsquare)       [right=of maintopic] {\textbf{Mesoscopic description} \\
Linear Boltzmann equations };
\node[ squarednode, align = center]        (lowercircle)       [below=of maintopic] {\textbf{Macroscopic description}\\
Fractional diffusion equations};
\draw[->, align = center] (uppercircle.south) -- node[left] {$N \varepsilon ^{d-1}\gg 1$,\\  $N \varepsilon ^{d}\ll 1$ } (lowercircle.north);
\draw[->, align = center] (uppercircle.east) --  node[right] {Boltzmann-Grad limit\\ $N \gg 1, N \varepsilon ^{d-1} = c$} (rightsquare.north);
\draw[->,align = center] (rightsquare.south) -- node[right] { Hydrodynamic limit \\ $c \gg1$ }  (lowercircle.east);
\end{tikzpicture}
\end{center}
 \caption{Description of the problem}
    \label{Fig2}
\end{figure}
\subsection*{Statement of main results}
In this paper we are providing several extensions to \cite{Matthies2018}: We consider short range potential dynamics, instead of hard sphere dynamics. We are working on the phase space $\mathds{R}^3 \times \mathds{R}^3$ instead of $\mathds{T}^3 \times \mathds{R}^3$. We are extending the time-scale of the derivation with quantitative error estimates, and lastly, we describe long-term fractional diffusion behaviour. The main results we are going to prove are the following two theorems. The first one is a theorem for the derivation of the linear Boltzmann equation from a Rayleigh gas particle system.
\begin{theorem}\label{thm1} Let $f_0 \in L^1( \mathds{R}^3 \times \mathds{R}^3)$ the initial distribution of the tagged particle with $f_0(x,v) (1+|v|) \in L^1(\R^3\times \R^3)$ and $g_0 (v) (1+|v|) \in L^1(\mathds{R}^3)$, where $g_0$ is the distribution of the background particles and let $t \in [0,T_{\varepsilon}]$,
$c T_{\varepsilon} = \varepsilon^{\frac{4 m}{3} - \frac{2}{9}},$
$0<m < \frac{1}{6}$. Then the distribution of the tagged particle $\hat{f}^N_t$ converges in the $L^1$-norm to the solution of the linear Boltzmann equation \eqref{LBE} $f_t$, in the Boltzmann-Grad limit $N \varepsilon^2 = c$, for a time $T_{\varepsilon}$ which is diverging with $\varepsilon \to 0$.
That is, there exists $\varepsilon_0>0$  such that for every $\varepsilon >0$, with $ \varepsilon < \varepsilon _0 $ such that for any $t\in [0,T_{\varepsilon}]$
the error can be estimated by
\begin{align*}
\| \hat{f}^N_t(x,v) - f_t (x,v) \|_{L^1(\mathds{R}^3 \times \mathds{R}^3)} & \le
C \varepsilon ^{m}.
\end{align*}
\end{theorem}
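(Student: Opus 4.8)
The plan is to follow the semigroup strategy from \cite{Matthies2018,MatthiesSyntaka2024surrey}, comparing the empiric tree dynamics of the Rayleigh gas to an idealised branching process associated to the linear Boltzmann equation \eqref{LBE}, and then to quantify the discrepancy carefully enough that the bound survives on the diverging time interval $[0,T_\varepsilon]$. First I would set up the two evolution semigroups on probability measures over collision trees: the \emph{idealised} semigroup, whose trees are generated by the Boltzmann collision kernel $B$ with the Poissonian background at intensity governed by $c$, and whose one-particle marginal is exactly $f_t$; and the \emph{empiric} semigroup, driven by the true Hamiltonian two-particle scattering with potential $U_\varepsilon$ restricted to $\Theta\in(0,\tfrac\pi2-\sigma)$. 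Mild (Duhamel) formulations for both give $\hat f^N_t - f_t$ as an integral over the history of the differences in (i) the collision operators and (ii) the measures on trees, so the task reduces to estimating a finite number of error mechanisms iterated over the tree.

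Next I would isolate and bound each error source separately, tracking the power of $\varepsilon$ and the power of $t$ (hence of $T_\varepsilon$) that each contributes: the \textbf{recollision / overlap error}, i.e. the probability that a background particle is met twice or that two background particles are simultaneously within $\varepsilon$ of the tagged particle; the \textbf{annulus / geometric error} coming from the finite interaction radius $\varepsilon$ versus the point-collision idealisation, including the correction to the scattering map away from the cut-off; and the \textbf{truncation error} from discarding trees with too many branches or with very large velocities, controlled using the assumed integrability $f_0(1+|v|),\,g_0(1+|v|)\in L^1$ and the fat-tailed but still integrable form \eqref{g0} of $g_0$. Each contributes a factor that is a positive power of $\varepsilon$ times a polynomial (or exponential) in $ct$; since $c$ and $T_\varepsilon$ are coupled through $c^{28/29}T_\varepsilon=\varepsilon^{38m/29-5/29}$, one then optimises the number of retained tree levels and the velocity cutoff as functions of $\varepsilon$ so that the accumulated error is $\le C\varepsilon^m$. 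The exponents $\tfrac{28}{29}$, $\tfrac{38}{29}$, $\tfrac5{29}$ and the constraint $0<m<\tfrac5{38}$ should emerge precisely from balancing these competing powers; I would carry the bookkeeping symbolically and solve the resulting linear inequalities for the admissible range of $m$.

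The main obstacle I anticipate is controlling recollisions uniformly over the diverging time horizon while simultaneously accommodating the \emph{fat tail} of $g_0$. In the hard-sphere, fixed-time setting of \cite{Matthies2018} the recollision probability is handled by a relatively crude geometric estimate, but here the cross-section $B$ in \eqref{CK} grows like $|v-\bar v|^{(n-5)/(n-1)}$ (which for $n\in(2,5]$ is a negative or zero power, hence bounded for large relative speed, but one must still treat small relative speeds and the $\Theta(\sin\Theta)^{-1}$ factor near the cut-off), and the background velocities are drawn from a distribution with only finitely many moments, so high-velocity background particles are non-negligible and make both the free-flight times short and the scattering angles delicate. Getting a recollision bound of the form $\varepsilon^{a}(ct)^{b}$ with $a$ large enough to beat the growth in $t$ — that is the crux, and it is where the semigroup/collision-tree formalism must be pushed hardest; the remaining steps (Duhamel comparison, velocity truncation, final optimisation of exponents) are then essentially bookkeeping built on the estimates already available in the cited companion work.
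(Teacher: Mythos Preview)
Your proposal is essentially correct and follows the same route as the paper: collision-tree semigroups, a good/bad tree decomposition, separate error channels (recollisions, initial overlap, too many collisions, large velocities), and then an algebraic balancing of the leading terms to extract the exponents $\tfrac{28}{29},\tfrac{38}{29},\tfrac5{29}$ and the range $0<m<\tfrac5{38}$.

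Two refinements are worth flagging. First, the paper does \emph{not} compare the two evolutions via a Duhamel expansion of the operator difference as you describe; instead it proves the pointwise monotonicity $\hat P_t(\Phi)\ge P_t(\Phi)$ on good trees (from $\hat{\mathcal Q}^+=\mathcal Q^+$ and $\hat L_t\le L_t$, plus a scalar differential inequality and induction on $n(\Phi)$). This collapses the whole comparison to $\sup_S|P_t(S)-\hat P_t(S)|\le P_t(\mathcal{MT}\setminus\mathcal G(\varepsilon))$, so every estimate is carried out on the \emph{idealised} measure only --- you never have to control the empirical measure on bad trees or bound operator differences applied to $\hat P_t$. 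Your Duhamel framing would force exactly that, and it is not obvious it closes without this monotonicity shortcut. Second, the fat tail of $g_0$ is not confronted inside the recollision estimate: velocities are first truncated at $V(\varepsilon)$ using only the first moment $g_0(v)(1+|v|)\in L^1$, and the recollision bound (Proposition~\ref{2.1.2.}) is then purely geometric for bounded speeds. All error terms grow \emph{linearly} in $ct$ (momentum and expected collision count are both $O(ct)$), not exponentially, which is what makes the final balancing a simple algebraic system.
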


We compare our result with those of \cite{Gallagher2013} and \cite{Matthies2018}. In \cite{Gallagher2013}, the authors rigorously derive the Boltzmann equation from a system of hard spheres or Newtonian particles interacting via a short-range potential. In \cite{Matthies2018}, the linear Boltzmann equation is derived in the Boltzmann–Grad limit from a Rayleigh gas particle system. The primary distinction lies in the methodological approach: \cite{Gallagher2013} relies on the BBGKY hierarchy, whereas both our work and \cite{Matthies2018} use semigroup methods for the comparison of probabilities  of genuine collision histories.

A key difference also concerns the time scale for which convergence to the Boltzmann equation is established. In \cite{Gallagher2013}, convergence holds for short times. In contrast, \cite{Matthies2018} proves convergence for arbitrarily long but finite, fixed times. Our result goes further, establishing convergence for times that diverge in the Boltzmann–Grad limit.

The next theorem establishes the rigorous justification  of a fractional diffusion equation from a Rayleigh gas particle system with the background particles being initially distributed according to a spatial homogeneous Poisson process. In the macroscopic limit, the trajectory of the tagged particle is defined by $\Xi (\tau) := x(\tilde{\beta} \tau) \in \mathds{R}^3$. The distribution of $\Xi (\tau)$ is given by $\hat{f}^N( \tilde{\beta} \tau, x, v)$, see Lemma \ref{timescale} where the admissible choice of $\tilde{\beta}$ is given, which is $\tilde{\beta} = \epsilon^{1-{\gamma}}$, and the relation of this with the parameter $c$.
\begin{theorem}\label{thm2}
Let $f_0 \in L^1 \cap L^2_{F^{-1}}(\R^3 \times \R^3)$ be the initial distribution of the tagged particle and consider the background particles that are Poisson distributed according to the density $N g_0(v) \d x \d v$. Define $\gamma(q):= \frac{(q-4)(n-1)}{4} +1$, for $4<q<\frac{4n}{n-1}$ and $n\in (3,5]$. Let $F$ be  the unique positive   equilibrium distribution of the linear Boltzmann equation. Let  $\rho(\tau,x)$ be the solution of the fractional diffusion equation \eqref{FDE}.
Then the distribution of the tagged particle $\hat{f}^N(\epsilon^{1-{\gamma}} \tau, x, v)$ converges in $L^{\infty}(0,T;L^2_{F^{-1}} \cap L^1(\R^3 \times \R^3))$-norm to $\rho(\tau,x) F(v)$, i.e. for any test function $\tilde{\varphi} \in L^1(0,T; L^2_{F^{-1}} \cap L^{\infty}(\R^3 \times \R^3))$
\begin{align*}
\int_{0}^{T} \int_{\R^3 \times \R^3} | (\hat{f}^N ( \epsilon^{1-{\gamma}} \tau,  x,v) - \rho(\tau,x) F(v) ) \cdot \tilde{\varphi}(\tau,x,v) | \d x  \d v \d \tau \to 0,
\end{align*}
in the limit $N \to \infty$, with $c = N \varepsilon^2 \to \infty.$
\end{theorem}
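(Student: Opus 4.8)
The argument chains the two limits depicted in Figure \ref{Fig2}. For a test function $\tilde\varphi\in L^1(0,T;L^2_{F^{-1}}\cap L^\infty(\R^3\times\R^3))$ the plan is to write
\begin{equation*}
\hat f^N(\tilde\beta\tau,x,v)-\rho(\tau,x)F(v)=\big(\hat f^N(\tilde\beta\tau,x,v)-f_{\tilde\beta\tau}(x,v)\big)+\big(f_{\tilde\beta\tau}(x,v)-\rho(\tau,x)F(v)\big),
\end{equation*}
pair with $\tilde\varphi$, bound the first bracket by the quantitative Boltzmann--Grad limit of Theorem \ref{thm1} (in $L^1$, hence against the bounded factor of $\tilde\varphi$), and bound the second by the hydrodynamic limit of the linear Boltzmann equation \eqref{LBE} as $c\to\infty$, carried out along the lines of Mellet, Mischler and Mouhot \cite{Mellet2011}. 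The time scale $\tilde\beta=\tilde\beta(c)$ and its link to $c$ is supplied by Lemma \ref{timescale}.

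\emph{Hydrodynamic limit of \eqref{LBE}.} First I would identify the equilibrium: from the background law \eqref{g0} and the collision kernel \eqref{CK}, $F$ is radial with power tail $F(v)\sim|v|^{-(3+\alpha)}$, $\alpha=q-3$, while the collision frequency $\nu(v)=\int_{\mathbb{S}^2}\int_{\R^3}g_0(\bar v)B(v-\bar v,\omega)\,\d\bar v\,\d\omega$ grows like $|v|^{\beta}$ with $\beta=\frac{n-5}{n-1}$; the constraint $4<q<\frac{4n}{n-1}$ is exactly what keeps this $\bar v$-integral finite, forces $\alpha+\beta<2$ (so $|v|^2F(v)/\nu(v)$ fails to be integrable, the marker of anomalous rather than normal diffusion), and yields $0<\gamma=\frac{\alpha-\beta}{1-\beta}<2$. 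I would then verify that \eqref{LBE}, with this pair $(F,\nu)$, meets the structural hypotheses of \cite{Mellet2011} (micro-reversibility with respect to $F$, the resulting self-adjointness and spectral gap of $Q$ on $\{F\}^{\perp}$ in $L^2_{F^{-1}}$, and two-sided bounds on $\nu$), so that under the anomalous rescaling $t=\tilde\beta\tau$ of Lemma \ref{timescale} one obtains $f_{\tilde\beta\tau}\to\rho(\tau,x)F(v)$ weakly-$*$ in $L^\infty(0,T;L^2_{F^{-1}}\cap L^1)$, with $\rho$ the solution of \eqref{FDE} of order $\gamma$ with the stated $\kappa$ and datum $\rho_0=\int_{\R^3}f_0\,\d v$. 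The hypothesis $f_0\in L^1\cap L^2_{F^{-1}}$ is the natural class for this step; one checks separately (Cauchy--Schwarz on $\{|v|\le1\}$ and $\{|v|>1\}$ against the weight $F^{-1}\sim|v|^{q}$) that it is contained in the weighted-$L^1$ class $f_0(1+|v|)\in L^1$ required by Theorem \ref{thm1}.

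\emph{Matching the scales and conclusion.} Theorem \ref{thm1} holds only on $[0,T_\varepsilon]$ with $c^{28/29}T_\varepsilon=\varepsilon^{38m/29-5/29}$ and $c=N\varepsilon^2$, so I would choose $c=c(\varepsilon)=\varepsilon^{-k}$ for a suitable rational $k>0$ making all of the following hold at once: $c\to\infty$ (needed for the hydrodynamic step); the macroscopic window fits inside that of Theorem \ref{thm1}, i.e. $\tilde\beta(c)\,T\le T_\varepsilon$; and the error $\varepsilon^m\to0$ for an admissible $m\in(0,\frac{5}{38})$. Since $\tilde\beta$ is a fixed power of $c$ and $c$ a power of $\varepsilon$, this is a finite system of inequalities on the exponents, and I would exhibit a nonempty range of $k$ (hence of $\gamma$, hence of $q$) for which it closes. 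Granting this, pairing the decomposition above with $\tilde\varphi$ bounds the first bracket by $\|\tilde\varphi\|_{L^1_\tau L^\infty_{x,v}}\sup_{\tau\le T}\|\hat f^N(\tilde\beta\tau)-f_{\tilde\beta\tau}\|_{L^1}\le C\,\varepsilon^{m}\|\tilde\varphi\|\to0$ by Theorem \ref{thm1} (using $\tilde\beta T\le T_\varepsilon$), while the second bracket $\to0$ by the previous step; letting $\varepsilon\to0$, equivalently $N\to\infty$ with $c=N\varepsilon^2\to\infty$, gives the claim.

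The main obstacle is exactly this matching of regimes: the hydrodynamic limit wants $c$ (hence $\tilde\beta$) as large as possible, whereas Theorem \ref{thm1} only tolerates $c$ up to a power of $\varepsilon$ tied to $T_\varepsilon$, and one must check that the exponent arithmetic admits a common solution while the constants produced by \cite{Mellet2011} stay uniform along the chosen sequence $\varepsilon\to0$. A secondary point, logically prior to everything, is establishing that \eqref{LBE} with the cut-off kernel \eqref{CK} genuinely possesses an equilibrium $F$ with the asserted tail and that the $L^2_{F^{-1}}$ functional-analytic framework is available for it.
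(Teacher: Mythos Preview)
Your overall strategy matches the paper's: split $\hat f^N-\rho F$ into $(\hat f^N-f)+(f-\rho F)$, control the first piece by Theorem~\ref{thm1} in $L^1$ against the $L^\infty$ part of $\tilde\varphi$, and control the second by the hydrodynamic limit of \cite{Mellet2011} under the rescaling of Lemma~\ref{timescale}. This is exactly what the paper does.

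There is one imprecision worth flagging. You describe the hypotheses of \cite{Mellet2011} as ``micro-reversibility with respect to $F$, self-adjointness and spectral gap of $Q$ on $\{F\}^\perp$, and two-sided bounds on $\nu$''. That is not the framework of their Theorem~3.2: the relevant assumptions are the tail conditions $(B_1)$--$(B_2)$ on $F$ and $\nu$ together with a pointwise integral bound $(B_3)$ on $b(v,v')=\sigma(v,v')F^{-1}(v)$, and no spectral gap is invoked. The paper accordingly spends most of Section~\ref{pthm2} on asymptotics: the Carleman representation of the gain term yields $K(F)(v)\sim C|v|^{-q+\frac{n-5}{n-1}}$, the loss term gives $\nu(v)\sim\tilde C|v|^{\frac{n-5}{n-1}}$, and dividing produces the tail $F(v)\sim|v|^{-q}$, i.e.\ $\alpha=q-3$, $\beta=\frac{n-5}{n-1}$, after which $(B_3)$ is checked directly from the lower bound on $B$ in Corollary~\ref{CorCOB}. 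The existence of $F$ itself comes from Krein--Rutman, not from any detailed-balance structure. So your plan is right in outline, but the verification step is computational (Carleman asymptotics and Proposition~\ref{2.4.}) rather than spectral.

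Your discussion of scale-matching is more careful than the paper's own proof, which simply invokes the two limits and the triangle inequality without spelling out the exponent compatibility; the coupling $c^{28/29}T=\varepsilon^{38m/29-5/29}$ from Section~4.7 is what makes it work.
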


\subsection*{Plan of paper}
In the next section we provide an overview of the main ingredients for the proofs of both theorems.
In section \ref{sec:kernel} we provide a general introduction on how to obtain the collision kernel $\mathcal{B}$. Furthermore we find the formula of the collision kernel $\mathcal{B}$ for the case $n \in (3,5]$, for large relative velocities. In section \ref{pthm1} we prove Theorem \ref{thm1} by adapting and extending \cite{Matthies2018} with quantitative error estimates and by considering short range potential dynamics on the phase space $\R^3 \times \R^3$. The proof of Theorem \ref{thm2} is given in section \ref{pthm2}. We adapt results from \cite{Mellet2011} and combine them with Theorem \ref{thm1}.
\section{Collision Trees}
The main ingredient to prove Theorem \ref{thm1} is the representation of the dynamics through collision trees. This will allow us to estimate the difference between the idealised behaviour  as predicted by the Boltzmann equation and the empirical, genuine particle dynamics.  We use similar structures as in \cite{matthies10,matt12} developed for nonlinear gainless dynamics. In \cite{Matthies2018, Matthies2018a} these were adapted to a Rayleigh gas of hard-sphere dynamics, such that collision trees have height 2 and essentially become lists. Very detailed proofs and explanations in the Rayleigh cases are given in \cite {Stone2017}. 

A collision tree or collision history $\Phi$ is a set that includes the collisions that the tagged particle experiences. More precisely, it includes the initial position and velocity of the tagged particle $(x_0, v_0) \in \mathds{R}^3 \times \mathds{R}^3$ along with a list of collisions that the tagged particle experiences. Each collision is denoted by $(t_j, \nu _j, v_j) \in (0,T] \times \mathbb{S}^2 \times \mathds{R}^3$, where $t_j$ is the time that the $j$-th collision happens, $\nu _j$ is the collision parameter and $v_j$ is the incoming velocity of the background particle.
\begin{definition}
The set of all collision trees $\mathcal{MT}$ is defined by
\begin{align*}
\mathcal{MT} :=\{ ((x_0, v_0), (t_1, \nu_1, v_1),...,(t_n,\nu_n, v_n)  ) : & \, (x_0,v_0)\in \mathds{R}^3 \times \mathds{R}^3,\\  &  \, t_i\in[0,T],\ \nu_i\in \mathbb{S}^2 ,\ v_i\in \mathds{R}^3, \ n\in \mathds{N}_0 \}.
\end{align*}
 For a tree $\Phi \in \mathcal{MT}$ is defined the function $n(\Phi)$ to be the number of collisions in this tree and for $n\ge 1$ define $\bar{\Phi}$ as the collision history identical to $\Phi$ but with the final collision removed.
Furthermore, we define the maximum collision time $\tau \in [0,T]$ as
\begin{equation*}
\tau = \tau(\Phi) :=
\begin{cases}
0, &n(\Phi)=0\\
\max\limits_{1\le j\le n} t_j, & \mathrm{else},
\end{cases}
\end{equation*}
and we define the marker of the final collision as
$$(\tau, \bar{\nu}, \bar{v}):=(t_n, \nu_n, v_n).$$
\end{definition}
\definition For a collision history $\Phi \in \mathcal{MT}$, the maximum velocity $\mathcal{V}(\Phi) \in [0, \infty)$ in the history is defined as
$$\mathcal{V}(\Phi):= \max \left\{ \max_{j=1,...,n(\Phi)} |v_j|, \max_{s\in [0,T]} |v(s)|  \right\},$$
where $v_j$, $j=1,...,n(\Phi)$ is the velocity of each background particle in the collision history $\Phi$ and $v(s),$ $s \in [0,T]$ is the velocity of the tagged particle in the same history for different times. We denote the smallest  relative speed of the $n(\Phi)$ collisions as
$$ \iota (\Phi):= \min_{j=1,...,n(\Phi)} |v(t_j^-)-v_j|.   $$

We associate probabilities of finding a given collision tree. We first define the idealised distribution.
Let $\Phi \in \mathcal{MT}$, then $P_0(\Phi)$ is zero unless $\Phi$ involves no collisions, in which case $P_0(\Phi)$ is given by initial distribution $f_0(x_0,v_0)$. $P_t(\Phi)$ remains zero until $t=\tau$ when there is an instantaneous increase to a positive value depending on $P_\tau(\bar{\Phi})$ and the final collision in $\Phi$. For $t>\tau$, $P_t(\Phi)$ decreases at a rate that is obtained by considering all possible collisions. This can be expressed as
\begin{equation}\label{eq-id}
\begin{cases}
\partial_tP_t(\Phi)&=c [Q_t^+[P_t](\Phi) - Q^-_{t}[P_t](\Phi)],\\
P_0(\Phi) & = f_0 (x_0,v_0)\mathds{1}_{n(\Phi)=0},
\end{cases}
\end{equation}
where
\begin{equation} \label{eq:Q+}
Q_t^+[P_t](\Phi)=
\begin{cases}
\mathds{1}_{t=\tau(\Phi)}P_t(\bar{\Phi})g_0(\bar{v})  \mathcal{B}(v^{\varepsilon}(\tau^-) - \bar{v} ,\omega) & n(\Phi)>0, \\
0 & n(\Phi)=0,
\end{cases}
\end{equation}
and
\begin{equation}\label{eq:Q-}
Q^-_{t}[P_t](\Phi)= P_t(\Phi)\int_{\mathds{R}^3} \int_{\mathbb{S}^2} g_0(\bar{v})  \mathcal{B}(v^{\varepsilon}(\tau) - \bar{v} ,\omega)\, \d{\omega}\, \d \bar{v},
\end{equation}
where $\bar{v}$ the velocity of the final colliding background particle and $v^{\varepsilon}(\tau^-)$ is the velocity of the tagged particle before the collision. The idealised equation should be thought as equivalent of the linear Boltzmann equation but written on the space $\mathcal{MT}$ instead of $\mathds{R}^3 \times \mathds{R}^3$. The evolution equation \eqref{eq-id} is well-posed by the same arguments as in \cite[Thm 3.1]{Matthies2018}.
Before we can introduce the empirical distribution related to the particle model, we need to introduce some further notation.
\begin{definition}
 A history $\Phi \in \mathcal{MT}$ is called non-grazing if
\begin{equation*}
\min_{1\le j \le n(\Phi)} \nu_j \cdot ( v(t_j^-) - v_j)>0.
\end{equation*}
This means that all the collisions in the history $\Phi$ are non-grazing, i.e., the tagged particle and each background particle $j$ are not flying parallel for long time.
\end{definition}
\begin{definition} We say that a collision history $\Phi \in \mathcal{MT}$ is free from initial overlap at diameter $\varepsilon$ if initially the tagged particle is at least $\varepsilon$ away from the centre of each background particle. That is to say, for all $j=1,...,n(\Phi),$ $$|x_0-x_j|>\varepsilon.$$
Define $S(\varepsilon) \subset \mathcal{MT}$ to be the set of all histories that are free from initial overlap at radius $\varepsilon$.
\end{definition}
\begin{definition} A collision history $\Phi \in \mathcal{MT}$ is called re-collision free at diameter $\varepsilon$ if for all $j=1,...,n(\Phi)$ and for all $t\in [0,T] \setminus (t_j, t_j + \tau_j^*)$, $$ |x_0(t) - x_j(t)|>\varepsilon.$$
That is, if the tagged particle and a background particle $j$ collide at time $(t_j, t_j + \tau_j^*)$ then the tagged particle has not previously collided and will not re-collide with the background particle $j$ up to time $T$.
Define the set
\begin{equation*}
R(\varepsilon):= \{ \Phi \in \mathcal{MT} :\ \Phi \text{\ is\ re-collision\ free\ at\ diameter}\ \varepsilon \}.
\end{equation*}
Here $\tau_j^*$ is the scattering time, i.e., the time it takes for the collision to take place, in the short range potential model. We also define the maximal collision time in a tree as
\begin{align*}
    \vartheta(\Phi)= \max_{j=1,...,n(\Phi)} \tau_j^*.
\end{align*}
\end{definition}
\begin{definition} A collision history $\Phi \in \mathcal{MT}$ is called binary at diameter $\varepsilon$ if for all $j=1,...,n(\Phi)$ the collision times $(t_j, t_j + \tau_j^*)$
are pairwise disjoint.

That is, the tagged particle will only interact with  at most a single background particle at any given time. 
Define the set
\begin{equation*}
B (\varepsilon):= \{ \Phi \in \mathcal{MT} :\ \Phi \text{\ is\ binary\ at\ diameter}\ \varepsilon \}.
\end{equation*}
\end{definition}

\begin{definition} \label{1.3.6.} The set of good histories $\mathcal{G}(\varepsilon)$ of diameter $\varepsilon$ is defined by
\begin{align*}
\mathcal{G}(\varepsilon) := \{ \Phi \in \mathcal{MT} : \ n(\Phi) \le M(\varepsilon),\ \mathcal{V}(\Phi) <&V(\varepsilon), \ \vartheta(\Phi) > I(\varepsilon), \  \iota(\Phi) > J(\Phi),\\
&\Phi \in R(\varepsilon) \cap S(\varepsilon) \cap B(\varepsilon)\ \mathrm{and} \ \Phi  \text{ is\ non-}\mathrm{grazing} \},
\end{align*}
for any strictly increasing $I, J: (0,\infty) \to [0, \infty)$ such that $\lim_{\varepsilon \to 0}I(\varepsilon) = \lim_{\varepsilon \to 0}J(\varepsilon) = 0$ and any decreasing functions $V, M : (0,\infty) \to [0, \infty)$ such that $\lim_{\varepsilon \to 0}V(\varepsilon) = \lim_{\varepsilon \to 0}M(\varepsilon) = \infty.$
\end{definition}
The functions $I$,  $J$, $V$ and $M$ will be defined in subsection \ref{largeT}.

\section{The collision kernel \texorpdfstring{$\mathcal{B}$}{B}}\label{sec:kernel}
\subsection{Deriving the collision kernel \texorpdfstring{$\mathcal{B}$}{\mathcal{B}}}
We will require a detailed understanding of the collision kernel $\mathcal{B}$ in \eqref{LBE}.
In \cite[Section 8.3]{Gallagher2013} an implicit formula for the collision kernel $\mathcal{B}$ is derived,  the process is as below. First, for fixed $x_1$ they calculate the surface measure on the sphere $ \{ y\in \R^3\ : \ |y-x_1| = \epsi \}$ which is defined by $d \sigma_1$ and also $x_2$ belongs in this sphere. Here $\epsi$ is the radius of the sphere. Now parametrise the sphere by ($\alpha , \psi$) $\in [0,\pi] \times [0,2\pi]$. Also, $\alpha$ is the angle between $\delta v = v_1-v_2$ and $\delta x = x_1-x_2$. Then
\begin{align*}
\d \sigma_1 = \epsi ^2 \sin \alpha \d \alpha  \d\psi.
\end{align*}
The direction of the apse line is $\omega = (\Theta, \psi) \in [0,\frac{\pi}{2}] \times [0,2\pi]$. Now we define the surface measure on the unit sphere by $\d \omega$. Then
\begin{align}\label{8.1.}
\d \omega = \sin \Theta  \d\Theta  \d\psi.
\end{align}
Also, by \cite[Definition 8.2.1]{Gallagher2013} $\alpha$ is defined by $\mathcal{J}_0 = \frac{|(x_1-x_2)\times(v_1-v_2)|}{\varepsilon |v_1-v_2|} = \sin \alpha$, where $|(x_1-x_2)\times(v_1-v_2)|$ is the magnitude of the cross product defined by $|(x_1-x_2)\times(v_1-v_2)| := |x_1-x_2||v_1-v_2| \sin \alpha$. Thus, taking the inner product of $x_1 - x_2$ and $v_1-v_2$,
\begin{align*}
(x_1-x_2)\cdot(v_1-v_2) := |x_1-x_2||v_1-v_2| \cos \alpha.
\end{align*}
Then, the above computations give
\begin{align*}
\frac{1}{\epsi} (x_1-x_2)\cdot(v_1-v_2) \d \sigma_1 
=& \epsi ^{2} |v_1-v_2| \cos \alpha \sin \alpha  \d \alpha \d \psi \\
=&  
   \varepsilon ^2 |v_1-v_2|  \mathcal{J}_0 \d \mathcal{J}_0\d\psi \\
=&  \varepsilon ^{2} |v_1-v_2|  \mathcal{J}_0 \partial _{\Theta} \mathcal{J}_0\d\Theta  \d\psi.
\end{align*}
The last equality holds true when $\mathcal{J}_0 \in [0,1)$ by \cite[Lemma 8.3.1]{Gallagher2013}.
Also, since
\begin{align*}
\d \Theta \, \d\psi  = (\sin \Theta)^{-1}   \d \omega
\end{align*}
we get
\begin{align*}
\frac{1}{\epsi} (x_1-x_2)\cdot(v_1-v_2) \d \sigma_1 =  \epsi ^{2} |v_1-v_2|  \mathcal{J}_0 \partial _{\Theta} \mathcal{J}_0 (\sin \Theta)^{-1}   \d\omega.
\end{align*}

\begin{definition}\label{def3.1} The scattering cross-section 
is defined for $|v_1-v_2|>0$ 
by $\mathcal{J}_0 \partial _{\Theta} \mathcal{J}_0 (\sin \Theta)^{-1} $. Therefore, the collision kernel $\mathcal{B}$ is defined by
\begin{align}\label{8.3}
\mathcal{B}(v_1-v_2, \Theta) : = &
|v_1-v_2|  \mathcal{J}_0 \partial _{\Theta} \mathcal{J}_0 (\sin \Theta)^{-1}.
\end{align}
By abuse of notation we may write $\mathcal{B}(v_1-v_2, \Theta)=\mathcal{B}(v_1-v_2, \omega)$. Here, $|v_1-v_2|$ is the kinetic part of the collision kernel and the remaining is the angular part of the collision kernel.
\end{definition}
\subsection{Formula of the collision kernel \texorpdfstring{$\mathcal{B}$}{\mathcal{B}} }
This subsection contains computations of finding the formula of the collision kernel $\mathcal{B}$ for two particles that are interacting via the short range potential introduced in the particle dynamics subsection in the introduction. We are doing so by finding the inverse of the function $\Theta$ with respect to $\mathcal{J}_0$.
Then we apply the above Definition \ref{def3.1} and analyse the behaviour for large relative velocities.  We start by observing that  $\mathcal{B}$ bounded for bounded velocities 
\begin{lemma} \label{lem:Bbounded}
    For any $n \in (3,5]$, the collision 
    kernel $\mathcal{B}$ is bounded  
    on bounded sets for $(v_1-v_2, \Theta) \in \R^3 \times [0,\pi/2]$.
\end{lemma}
\begin{proof}
    Following \cite{Gallagher2013}[Lemma 8.3.1], we observe that for fixed $|v_1-v_2|$, that  $\mathcal{J}_0 (\Theta) $ and $\partial _{\Theta} \mathcal{J}_0(\Theta)$ are well-defined, bounded and continuous for all $\Theta \in [0, \pi/2]$, such that  $\mathcal{J}_0 (0)=0 $ and
    $\partial _{\Theta} \mathcal{J}_0(0) < \infty$, which implies continuity of $\mathcal{B}$ for $|v_1-v_2|>0$. A simple inspection of the for $\Theta(|v_1-v_2|^2,\mathcal{J}_0) $ and $\frac{\partial \Theta}{\partial \mathcal{J}_0} (|v_1-v_2|^2,\mathcal{J}_0) $ yields that in the limit $|v_1-v_2|^2 \to 0$, the terms simplify to $\mathcal{J}_0 (\Theta) = \sin( \Theta) $ and $\partial _{\Theta} \mathcal{J}_0(\Theta)= \cos(\Theta).$ 
Hence $\mathcal{B}$ as given in \eqref{8.3} for our interaction potentials is also continuous at $|v_1-v_2|=0$ such that it is bounded for  
    bounded relative velocities $|v_1 - v_2|$ uniformly  in  $\Theta \in [0,\pi/2]$.  
\end{proof}
The main proposition of this section is the following.
\begin{proposition}\label{P3.2}
For any $n \in (3,5]$, the collision kernel $\mathcal{B}$ is bounded for large relative velocities $|v_1 - v_2|  \to \infty$ and 
$\mathcal{B}$ takes the form
\begin{align}\label{2.4}
\mathcal{B}(v_1-v_2, \Theta)
&=
\frac{2^{\frac{2}{n-1}}}{a^2}  \bigg[ |v_1-v_2|^{ \frac{n-5}{n-1}}  +  \mathcal{O}( |v_1-v_2|^{ \frac{n-7}{n-1} })\bigg]  \Theta (\sin \Theta)^{-1},
\end{align}
where $a$ is a fixed number.
Furthermore, it holds true that
\begin{align*}
\bigg| \mathcal{B}(v_1 - v_2, \Theta) - \frac{2^{\frac{2}{n-1}}}{a^2}  \Theta (\sin \Theta)^{-1} |v_1-v_2|^{ \frac{n-5}{n-1} } \bigg|
&=
\frac{2^{\frac{2}{n-1}}}{a^2} \Theta (\sin \Theta)^{-1}  \mathcal{O}( |v_1-v_2|^{ \frac{n-7}{n-1} }) \\
& = : \mathrm{error}(\Theta, |v_1 - v_2|).
\end{align*}
\end{proposition}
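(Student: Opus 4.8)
The plan is to make explicit the implicit scattering relation behind Definition~\ref{def3.1} and then extract its behaviour for large relative speed. Write $V:=|v_1-v_2|$ and pass to the relative coordinate, rescaled so that the interaction has range $1$; then $\mathcal{J}_0$ is the impact parameter of the incoming straight line, the motion inside the unit ball is governed by conservation of energy and angular momentum, and the turning point $r_*=r_*(\mathcal{J}_0,V)$ is the unique root in $(0,1)$ of $1-\mathcal{J}_0^2/r_*^2-2U(r_*)/V^2=0$ (the effective radial potential $\mathcal{J}_0^2/r^2+2U(r)/V^2$ being strictly decreasing on $(0,1]$). Since $U\equiv 0$ outside the unit ball the motion is free there and contributes $\arcsin\mathcal{J}_0$ to the swept angle, so the apse angle attached to $(\mathcal{J}_0,V)$ is
\begin{equation*}
\Theta(\mathcal{J}_0,V)=\arcsin\mathcal{J}_0+\int_{r_*}^{1}\frac{(\mathcal{J}_0/r^2)\,\d r}{\sqrt{\,1-\mathcal{J}_0^2/r^2-2U(r)/V^2\,}}.
\end{equation*}
This is exactly the function that Definition~\ref{def3.1} inverts, so the task is to invert $\Theta(\cdot,V)$ on $(0,\tfrac{\pi}{2}-\sigma)$, differentiate, and substitute into $B=V\,\mathcal{J}_0\,\partial_\Theta\mathcal{J}_0\,(\sin\Theta)^{-1}$.

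First I would isolate the scale. Inside the ball $U(r)=Kr^{-(n-1)}-K$, so for large $V$ the turning-point balance is $\mathcal{J}_0^2/r_*^2+2Kr_*^{-(n-1)}/V^2\approx 1$, which forces $r_*$ and $\mathcal{J}_0$ to be of order $\ell:=(2K/V^2)^{\frac{1}{n-1}}$; in particular, once $\Theta$ is bounded away from $\tfrac{\pi}{2}$ (the grazing value) the impact parameter must be small, of this order. Substituting $r=\ell\eta$ and setting $\beta:=\mathcal{J}_0/\ell=\mathcal{J}_0(V^2/2K)^{\frac{1}{n-1}}$ turns both the turning-point equation and the integral into those for the pure repulsive potential $\eta^{-(n-1)}$ at unit energy, truncated at $\eta=L:=\ell^{-1}$ and perturbed by the constant $+2K/V^2$. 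As $V\to\infty$ these converge to the untruncated, unperturbed pure-power problem, whose apse angle is a fixed smooth strictly increasing bijection $\Psi\colon[0,\infty)\to[0,\tfrac{\pi}{2})$ with $\Psi(0)=0$ (no orbiting occurs, the effective potential $\beta^2/\eta^2+\eta^{-(n-1)}$ being monotone). I would then establish the quantitative form
\begin{equation*}
\Theta=\Psi(\beta)+\mathcal{O}\!\big(V^{-\frac{2}{n-1}}\big)\qquad\text{uniformly for }\Theta\in\big(0,\tfrac{\pi}{2}-\sigma\big),
\end{equation*}
the remainder collecting the boundary term $\arcsin\mathcal{J}_0=\mathcal{O}(\ell)$, the $\mathcal{O}(V^{-2})$ shift produced by the constant $-K$, and the effect of truncating the integral at $\eta=L$ rather than at $\infty$.

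Inverting and differentiating then gives $\mathcal{J}_0=\ell\big(\Psi^{-1}(\Theta)+\mathcal{O}(\ell)\big)$ and $\mathcal{J}_0\,\partial_\Theta\mathcal{J}_0=\ell^2\,\Psi^{-1}(\Theta)(\Psi^{-1})'(\Theta)\big(1+\mathcal{O}(\ell)\big)$; multiplying by $V(\sin\Theta)^{-1}$ and using $V\ell^2=(2K)^{\frac{2}{n-1}}V^{\frac{n-5}{n-1}}$,
\begin{equation*}
B(v_1-v_2,\Theta)=(2K)^{\frac{2}{n-1}}\,V^{\frac{n-5}{n-1}}\,\frac{\Psi^{-1}(\Theta)(\Psi^{-1})'(\Theta)}{\sin\Theta}\,\Big(1+\mathcal{O}\!\big(V^{-\frac{2}{n-1}}\big)\Big).
\end{equation*}
Since $\Psi(0)=0$ and $\Psi$ is smooth and strictly increasing near $0$, $\Psi^{-1}(\Theta)=\Theta/a+\mathcal{O}(\Theta^2)$ with $a:=\Psi'(0)\in(0,\infty)$, hence $\Psi^{-1}(\Theta)(\Psi^{-1})'(\Theta)=\Theta/a^2$ to leading order as $\Theta\to 0$; writing the fixed angular profile in this form and collecting the lower-order terms into the error yields the stated form \eqref{2.4}, and the displayed error identity follows because the relative gain $\tfrac{n-5}{n-1}-\tfrac{n-7}{n-1}=\tfrac{2}{n-1}$ is precisely the one already exhibited, while $\Theta(\sin\Theta)^{-1}$ is bounded on $(0,\tfrac{\pi}{2}-\sigma)$, so that the restriction $n\le 5$ indeed makes $B$ bounded for large $V$.

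The step I expect to be the main obstacle is the uniform control of the scattering integral and of its derivative. Producing $\partial_\Theta\mathcal{J}_0$ requires differentiating $\Theta(\mathcal{J}_0,V)$ in $\mathcal{J}_0$, which acts on a moving boundary $r_*(\mathcal{J}_0)$ at which the integrand has an inverse-square-root singularity, so one must first desingularise --- by a change of variable flattening the turning point, as in the classical treatment in \cite[Section~8.3]{Gallagher2013} --- before the dependence on $\mathcal{J}_0$ and on $V$ can be estimated and the passage to $\Psi$ quantified. The angular cut-off is exactly what makes everything uniform: restricting to $\Theta<\tfrac{\pi}{2}-\sigma$ keeps $\beta=\Psi^{-1}(\Theta)$ in a fixed compact subset of $(0,\infty)$, so $r_*/\ell$ stays bounded away from $0$ and $\infty$, the singularity remains of the standard type, and the implied constants --- together with the error exponents --- are independent of $\Theta$.
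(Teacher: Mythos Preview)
Your approach is essentially the same as the paper's, just packaged more conceptually. The paper works directly with the explicit integral representation of $\Theta(\mathcal{J}_0)$ taken from \cite{cerci88}, performs two changes of variable, computes the derivative $\Sigma'(\mathcal{J}_0)$ at the single point $\mathcal{J}_0=0$ (where the turning-point parameter satisfies $u_0=1$; Lemma~\ref{L3.5.} is devoted to showing $\d u_0/\d\mathcal{J}_0=0$ there so that the differentiation under the singular integral is clean), identifies the dominant term $a\,(\mathcal{E}_0/2K)^{1/(n-1)}$ with $a=\int_0^1(1-(1-w)^{n-1})^{-1/2}\,\d w$, applies the inverse function theorem at that one point, and integrates the resulting constant to obtain $\mathcal{J}_0(\Theta)=\tfrac{(2K)^{1/(n-1)}}{a}\,\Theta\,\mathcal{E}_0^{-1/(n-1)}\big(1+\mathcal{O}(\mathcal{E}_0^{-1/(n-1)})\big)$. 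Your rescaling $\beta=\mathcal{J}_0/\ell$ and passage to the limiting pure-power map $\Psi$ are exactly this computation rewritten; your constant $a=\Psi'(0)$ coincides with the paper's $a$, and your ``desingularise at the turning point'' step is what the paper's $u=u_0y$ substitution and Lemma~\ref{L3.5.} accomplish.

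One remark on your last step: expanding $\Psi^{-1}(\Theta)(\Psi^{-1})'(\Theta)=\Theta/a^2+\mathcal{O}(\Theta^2)$ and then ``collecting the lower-order terms into the error'' does not literally work, because the $\mathcal{O}(\Theta^2)$ correction is independent of $V$ and so cannot be absorbed into $\mathcal{O}(V^{(n-7)/(n-1)})$. The paper's proof has exactly the same feature---it linearises at $\mathcal{J}_0=0$ and integrates $(\Sigma^{-1})'(\Sigma(0))$ as if constant---so your argument matches the paper's level of rigour; and downstream only boundedness of the angular factor on $(0,\tfrac{\pi}{2}-\sigma)$ together with the $V$-exponent are actually used (cf.\ Corollary~\ref{CorCOB}). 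But you should be aware that, strictly, what both arguments establish for general $\Theta$ away from $0$ is the profile $\Psi^{-1}(\Theta)(\Psi^{-1})'(\Theta)(\sin\Theta)^{-1}$ with the stated $V$-asymptotics, and \eqref{2.4} is its linearisation at $\Theta=0$.
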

We note here that when we use the Landau notation in \eqref{2.4} for large relative velocities, we mean $\mathcal{O}(|v_1 - v_2|^{ \frac{n-7}{n-1} })$, as $|v_1 - v_2|\to \infty$ where we mean the standard Landau notation 
 $f= \mathcal{O}(g)$ as $x\to \infty$ if there exists a constant $C$ such that $|f(x)| \le C |g(x)|$ for all $x$ sufficiently large. The Proposition immediately implies upper and lower bounds. 
 \begin{corollary}\label{CorCOB}(Upper and Lower bound for the collision kernel $\mathcal{B}$)
For $n \in (3,5]$ and large $|v_1 - v_2| >0$ 
we have the following bounds.
\begin{align} \label{eqn:lowbdB}
 \frac{1}{2} \frac{2^{\frac{2}{n-1}}}{a^2}  |v_1-v_2|^{ \frac{n-5}{n-1} } \Theta (\sin \Theta)^{-1} &
\le
\mathcal{B}(v_1-v_2, \Theta)
\le
3 \frac{2^{\frac{2}{n-1}}}{a^2}  |v_1-v_2|^{ \frac{n-5}{n-1} } \Theta (\sin \Theta)^{-1}. 
\end{align}
\end{corollary}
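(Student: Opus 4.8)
The plan is to read the two bounds straight off Proposition \ref{P3.2}, which has already done all the real work: it isolates the angular factor $\Theta(\sin\Theta)^{-1}$ (strictly positive for $\Theta \in (0,\frac{\pi}{2}-\sigma)$) together with the positive prefactor $\frac{(2K)^{2/(n-1)}}{a^2}$, so that everything reduces to controlling the bracket $\big[\,|v_1-v_2|^{(n-5)/(n-1)} + \mathcal{O}(|v_1-v_2|^{(n-7)/(n-1)})\,\big]$ and then multiplying the resulting scalar inequality by the positive quantity $\frac{(2K)^{2/(n-1)}}{a^2}\,\Theta(\sin\Theta)^{-1}$.

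First I would factor $|v_1-v_2|^{(n-5)/(n-1)}$ out of the bracket, writing it as $|v_1-v_2|^{(n-5)/(n-1)}\big(1 + \mathcal{O}(|v_1-v_2|^{-2/(n-1)})\big)$, using $\frac{n-7}{n-1}-\frac{n-5}{n-1} = -\frac{2}{n-1}$. Since $n \in (2,5]$ gives $-\frac{2}{n-1}<0$, the $\mathcal{O}$-term tends to $0$ as $|v_1-v_2|\to\infty$. Letting $C_\ast$ be the implied constant in the error estimate of Proposition \ref{P3.2}, there is a threshold $R_0 = R_0(n,K,a,C_\ast)$ (one may take $R_0 = (2C_\ast)^{(n-1)/2}$ after the obvious rescaling) such that $|\mathcal{O}(|v_1-v_2|^{-2/(n-1)})| \le \tfrac12$ whenever $|v_1-v_2| \ge R_0$; this is exactly what ``large relative velocities'' means in the statement.

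For such velocities the bracket lies between $\tfrac12 |v_1-v_2|^{(n-5)/(n-1)}$ and $\tfrac32 |v_1-v_2|^{(n-5)/(n-1)}$, and multiplying through by the positive quantity $\frac{(2K)^{2/(n-1)}}{a^2}\,\Theta(\sin\Theta)^{-1}$ yields
\begin{align*}
\tfrac12 \frac{(2K)^{2/(n-1)}}{a^2}\,|v_1-v_2|^{(n-5)/(n-1)}\,\Theta(\sin\Theta)^{-1}
\;\le\; B(v_1-v_2,\Theta)
\;\le\; \tfrac32 \frac{(2K)^{2/(n-1)}}{a^2}\,|v_1-v_2|^{(n-5)/(n-1)}\,\Theta(\sin\Theta)^{-1},
\end{align*}
and since $\tfrac32 \le 3$ the upper constant can be relaxed to the value $3$ appearing in \eqref{eqn:lowbdB}.

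There is no genuine obstacle here beyond bookkeeping; the only point that needs a line of care is that the implied constant in the $\mathcal{O}$ in \eqref{2.4} stays uniform as $\Theta \to 0$ or $\Theta \to \tfrac{\pi}{2}-\sigma$. This is automatic because Proposition \ref{P3.2} factorises the $\Theta$-dependence \emph{exactly} into the explicit factor $\Theta(\sin\Theta)^{-1}$ and states the remainder as $\frac{(2K)^{2/(n-1)}}{a^2}\,\Theta(\sin\Theta)^{-1}\,\mathcal{O}(|v_1-v_2|^{(n-7)/(n-1)})$, with the $\mathcal{O}$ depending on $|v_1-v_2|$ alone; hence the threshold $R_0$ can be chosen independently of $\Theta$ on the whole cut-off range.
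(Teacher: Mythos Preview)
Your argument is correct and is precisely the unpacking of what the paper does: its proof is the single line ``Immediate by the way that the collision kernel $B$ is defined,'' and your proposal just spells out that immediacy by factoring $|v_1-v_2|^{(n-5)/(n-1)}$ from the bracket in Proposition~\ref{P3.2} and absorbing the $\mathcal{O}(|v_1-v_2|^{-2/(n-1)})$ remainder into the constants $\tfrac12$ and $3$ for large relative velocity. There is nothing to add.
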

\begin{proof}
Immediate by the way that the collision kernel $\mathcal{B}$ is defined.
\end{proof}
 
In order to give a proof of this proposition, we first need to prove Proposition  which is about finding a formula of the function $\mathcal{J}_0$ with respect to $\Theta$. Then we can compute the derivative of $\mathcal{J}_0$ with respect to $\Theta$ and then we use \eqref{8.3} to give the formula \eqref{2.4} of the collision kernel $\mathcal{B}$. We first give the proof of a lemma needed for Proposition \ref{P3.4.}.

\begin{lemma}\label{L3.5.} The derivative of $u_0$ with respect to $\mathcal{J}_0$ is zero, for $\mathcal{E}_0$ fixed and $\mathcal{J}_0 = 0$ near $u_0 (\mathcal{J}_0) = 1.$
\end{lemma}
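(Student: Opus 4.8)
The plan is to read \eqref{1.2} as an implicit equation defining $u_0$ as a function of $\mathcal{J}_0$ (with $\mathcal{E}_0$ held fixed) and apply the implicit function theorem at the point $(\mathcal{J}_0,u_0)=(0,1)$. First I would set $A := \left( K + \frac{\mathcal{E}_0}{2K} \right)^{\frac{2}{n-1}}$, a fixed positive constant since $\mathcal{E}_0$ is fixed, $K>0$ and $n\in(2,5]$, and rewrite \eqref{1.2} as the vanishing of
\[
F(\mathcal{J}_0,u_0) := \mathcal{J}_0^2\, A\, u_0^2 + u_0^{n-1} - 1 .
\]
The function $F$ is a polynomial in $\mathcal{J}_0$ and is $C^1$ in $u_0$ for $u_0>0$, hence $C^1$ in a neighbourhood of $(0,1)$; and $(0,1)$ does lie on the zero set, since $F(0,1) = 1 - 1 = 0$, consistent with the remark after \eqref{1.2} that $\mathcal{J}_0 = 0$ forces $u_0 = 1$.

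Next I would compute the two relevant partial derivatives at $(0,1)$. Differentiating, $\partial_{u_0} F = 2\mathcal{J}_0^2 A u_0 + (n-1)u_0^{n-2}$, so $\partial_{u_0} F(0,1) = n-1 > 0$; in particular it is nonzero, so the implicit function theorem applies and produces a $C^1$ branch $u_0 = u_0(\mathcal{J}_0)$ near $\mathcal{J}_0 = 0$ with $u_0(0)=1$. Likewise $\partial_{\mathcal{J}_0} F = 2\mathcal{J}_0 A u_0^2$, whence $\partial_{\mathcal{J}_0} F(0,1) = 0$.

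Finally, differentiating the identity $F(\mathcal{J}_0,u_0(\mathcal{J}_0)) \equiv 0$ in $\mathcal{J}_0$ and evaluating at $\mathcal{J}_0 = 0$ gives
\[
\frac{\d u_0}{\d \mathcal{J}_0}\bigg|_{\mathcal{J}_0=0}
= -\,\frac{\partial_{\mathcal{J}_0} F(0,1)}{\partial_{u_0} F(0,1)}
= -\,\frac{0}{\,n-1\,} = 0,
\]
which is exactly the assertion. I do not expect any real obstacle: the only points needing a line of care are that $(0,1)$ actually solves \eqref{1.2} and that $\partial_{u_0}F$ does not vanish there, both of which are immediate from $n-1>0$. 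As an optional remark I would add that the vanishing is in fact structural — $\mathcal{J}_0$ enters \eqref{1.2} only through $\mathcal{J}_0^2$, so any $C^1$ solution branch is an even function of $\mathcal{J}_0$ and hence automatically has zero first derivative at the origin.
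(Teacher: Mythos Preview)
Your proof is correct and follows essentially the same approach as the paper: both define the same function $F$ from \eqref{1.2}, verify $F(0,1)=0$, compute $\partial_{u_0}F(0,1)=n-1\neq 0$ and $\partial_{\mathcal{J}_0}F(0,1)=0$, and conclude via the implicit function theorem. Your added remark about the even dependence on $\mathcal{J}_0$ is a nice structural observation not present in the paper.
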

\begin{proof}
With \eqref{1.2} we define
\begin{align*}
F(u_0, \mathcal{J}_0) := J_0^2 \bigg( 1 + \frac{ \mathcal{E}_0 }{2} \bigg)^{\frac{2}{n-1}} u_0^2 + u_0^{n-1} -1.
\end{align*}
We observe that $F(1,0) = 0$ and
\begin{align*}
\frac{\partial F}{\partial u_0} (u_0 , \mathcal{J}_0) = 2J_0^2 \bigg( 1 + \frac{ \mathcal{E}_0 }{2} \bigg)^{\frac{2}{n-1}} u_0 +(n-1) u_0^{n-2},
\end{align*}
 so $ \frac{\partial F}{\partial u_0} (1 , 0 ) = n-1 \neq 0$, for $n\neq 1. $
Also
\begin{align*}
\frac{\partial F}{\partial \mathcal{J}_0} (u_0 , \mathcal{J}_0) = 2J_0 \bigg( 1 + \frac{ \mathcal{E}_0 }{2} \bigg)^{\frac{2}{n-1}} u_0^2,
\end{align*}
 so $ \frac{\partial F}{\partial \mathcal{J}_0} (1 , 0 ) = 0. $
Therefore, by the Implicit Function Theorem we have
\begin{align*}
& \frac{\partial F}{\partial u_0} (1 , 0 ) \frac{\d u_0}{ \d \mathcal{J}_0} +\frac{\partial F}{\partial \mathcal{J}_0} (1 , 0 ) \frac{ \d \mathcal{J}_0}{\d \mathcal{J}_0} = 0
\end{align*}
which is equivalent to
\begin{align*}
\frac{\d u_0}{ \d \mathcal{J}_0}\bigg|_{\mathcal{J}_0 =0} = - \bigg(\frac{\partial F}{\partial u_0} (1 , 0 )\bigg)^{-1} \frac{\partial F}{\partial \mathcal{J}_0} (1 , 0 )
= 0,
\end{align*}
as required.
\end{proof}

\begin{proposition}\label{P3.4.}
For any $n \in (3,5]$ and for large relative velocities $|v_1-v_2|>0$, 
the formula of $\mathcal{J}_0(\Theta)$ is given by
\begin{align}\label{2.8.}
\mathcal{J}_0 (\Theta)
=
\frac{2^{\frac{1}{n-1}}}{a}\bigg(
 1
 + \mathcal{O}( \mathcal{E}_0^{-\frac{1}{n-1}} ) \bigg)
 \Theta \mathcal{E}_0^{-\frac{1}{n-1}}
 ,
\end{align}
where $\mathcal{E}_0 = |v_1 - v_2|^2$ and $a$ is a fixed real number.
\end{proposition}
\begin{proof}
By \cite[Chapter II]{cerci88}, the deviation angle $\Theta$ is given by the formula
\begin{align}\label{2.1.}
\Theta = \arcsin{J_0} + \int_{\lambda}^{x_0} \frac{\d x}{\sqrt{1-x^2-(\frac{x}{b})^{n-1}}},
\end{align}
where $x_0$, $\lambda$ and $b$ are  given by
\begin{align*}
1-x_0^2-\bigg(\frac{x_0}{b} \bigg)^{n-1} =0, \quad \lambda = J_0 \bigg( 1+ \frac{2}{ \mathcal{E}_0} \bigg)^{\frac{1}{2}}, \quad b = J_0 \bigg( 1 + \frac{ \mathcal{E}_0 }{2} \bigg)^{\frac{1}{n-1}},
\end{align*}
where $\mathcal{E}_0 := |v_1 - v_2|^2$. Following  \cite[Section 8.2]{Gallagher2013}, $\mathcal{J}_0 $ is defined by \[\mathcal{J}_0 : = \frac{|(x_1-x_2)\times(v_1-v_2)|}{\varepsilon |v_1-v_2|} =: \sin \alpha,\] where $|(x_1-x_2)\times(v_1-v_2)|$ is the magnitude of the cross product defined by $|(x_1-x_2)\times(v_1-v_2)| := |x_1-x_2||v_1-v_2| \sin \alpha$.

We use the change of variable $ u = \frac{x}{b}$, and \eqref{2.1.} becomes
\begin{align*}
\Theta = \arcsin{J_0} + \int_{\frac{\lambda}{b}}^{u_0} \frac{ b \, \d u}{\sqrt{1-b^2u^2 -u^{n-1}}},
\end{align*}
where $u_0$ is such that
\begin{align}\label{1.1}
1-b^2u_0^2 -u_0^{n-1} =0.
\end{align}
Again by the change of variable $u = u_0 y$ we have
\begin{align*}
\Theta = \arcsin{J_0} + \int_{\frac{\lambda}{bu_0}}^{1} \frac{ b u_0\, \d y}{\sqrt{1-b^2u_0^2 y^2 -(u_0y)^{n-1}}}
\end{align*}
and by \eqref{1.1} we get
\begin{align*}
\Theta = \arcsin{J_0} + \int_{\frac{\lambda}{bu_0}}^{1} \frac{ b u_0\, \d y}{\sqrt{1-(1-u_0^{n-1}) y^2 -(u_0y)^{n-1}}}.
\end{align*}
Here by equation \eqref{1.1} and by the definition of $b$ we get
\begin{align}\label{1.2}
 J_0^2 \bigg( 1 + \frac{ \mathcal{E}_0 }{2} \bigg)^{\frac{2}{n-1}} u_0^2 = 1 -u_0^{n-1},
\end{align}
where we note that when $\mathcal{J}_0 =0$, then $u_0 =1$.

Now, we define
\begin{align*}
\Sigma(\mathcal{J}_0) := \arcsin{J_0} + \int_{\frac{\lambda}{bu_0}}^{1} \frac{ b u_0\, \d y}{\sqrt{1-(1-u_0^{n-1}) y^2 -(u_0y)^{n-1}}}
\end{align*}
and we take the derivative of $\Sigma (\mathcal{J}_0)$ with respect to $\mathcal{J}_0$
\begin{align}
\Sigma'(\mathcal{J}_0) =\frac{1}{\sqrt{1- J_0^2}}
&+\bigg( 1 + \frac{ \mathcal{E}_0 }{2} \bigg)^{\frac{1}{n-1}} u_0  \int_{\frac{\lambda}{bu_0}}^{1} \frac{ \d y}{\sqrt{1-(1-u_0^{n-1}) y^2 -(u_0y)^{n-1}}} \nonumber \\
& + J_0 \bigg( 1 + \frac{ \mathcal{E}_0 }{2} \bigg)^{\frac{1}{n-1}} u_0
\frac{\d}{\d\mathcal{J}_0} \int_{\frac{\lambda}{bu_0}}^{1} \frac{ \d y}{\sqrt{1-(1-u_0^{n-1}) y^2 -(u_0y)^{n-1}}},\label{1.3.}
\end{align}
where
\begin{align*}
\frac{\d}{\d\mathcal{J}_0} \int_{\frac{\lambda}{bu_0}}^{1} \frac{ \d y}{\sqrt{1-(1-u_0^{n-1}) y^2 -(u_0y)^{n-1}}}
& = - \frac{ - \frac{\lambda}{b} \frac{1}{u_0^2}\frac{\d u_0}{\d \mathcal{J}_0}  }{\sqrt{1-(1-u_0^{n-1}) (\frac{\lambda}{bu_0})^2 -(\frac{\lambda}{b})^{n-1}}}\\
& \quad - \frac{1}{2}\int_{\frac{\lambda}{bu_0}}^{1} \frac{ (n-1) u_0^{n-2} (y^2- y^{n-1})\frac{\d u_0}{\d \mathcal{J}_0} \d y }{(1-(1-u_0^{n-1}) y^2 -(u_0y)^{n-1})^{\frac{3}{2}}} .
\end{align*}
Therefore, the derivative of $\Sigma(\mathcal{J}_0)$ for $\mathcal{J}_0 =0 $ is 
\begin{align*}
\Sigma'(0) = 1
&+\bigg( 1 + \frac{ \mathcal{E}_0 }{2} \bigg)^{\frac{1}{n-1}}  \int_{\frac{\lambda}{b}}^{1} \frac{ \d y}{\sqrt{1-y^{n-1}}}, 
\end{align*}
where we used that when $\mathcal{J}_0 =0$, then $u_0 =1$, by \eqref{1.2} and the fact that the derivative of $u_0$ with respect to $\mathcal{J}_0$ is zero, for $\mathcal{J}_0=0$, see Lemma \ref{L3.5.} below.
By using the change of variable $w : = 1-y$ the above integral becomes
\begin{align*}
 \int_{0}^{1- \frac{\lambda}{b}} \frac{\d w}{\sqrt{1-(1-w)^{n-1}}}
  =
 \int_{0}^{1} \frac{ \d w}{\sqrt{1-(1-w)^{n-1}}}
  -
   \int_{1- \frac{\lambda}{b}}^{1} \frac{ \d w}{\sqrt{1-(1-w)^{n-1}}},
\end{align*}
where the first term in the right hand side is a finite fixed number, since $n$ is fixed, so we define this finite fixed number by $a$. Furthermore, the second term in the right hand side is equal to $\mathcal{O}(\frac{\lambda}{b})$, as $\mathcal{E}_0 \to \infty$.
Therefore we have that
\begin{align*}
 \int_{0}^{1- \frac{\lambda}{b}} \frac{ \d w}{\sqrt{1-(1-w)^{n-1}}}
  = a + \mathcal{O}(\frac{\lambda}{b}), \quad \text{as} \ \mathcal{E}_0 \to \infty.
\end{align*}
Thus, we write
\begin{align*}
\Sigma'(0) = 1
&+\bigg( 1 + \frac{ \mathcal{E}_0 }{2} \bigg)^{\frac{1}{n-1}}
 \int_{0}^{1- \frac{\lambda}{b}} \frac{ \d w}{\sqrt{1-(1-w)^{n-1}}} =
  1
+
\bigg( 1 + \frac{ \mathcal{E}_0 }{2} \bigg)^{\frac{1}{n-1}}
\bigg( a + \mathcal{O}(\frac{\lambda}{b})  \bigg).
 \end{align*}
Equivalently,
\begin{align}\label{1.5}
\Sigma'(0) =
  1
+
 a \bigg( 1 + \frac{ \mathcal{E}_0 }{2} \bigg)^{\frac{1}{n-1}}
 +   \mathcal{O}(\frac{\lambda}{b})
\bigg( 1 + \frac{ \mathcal{E}_0 }{2} \bigg)^{\frac{1}{n-1}}.
  \end{align}
Also, the second term of $\eqref{1.3.}$, for $u_0 =1 - \delta$, $\delta >0$ sufficient small, becomes
\begin{align*}
\frac{b}{\mathcal{J}_0} (1-\delta)   \int_{\frac{\lambda}{b(1-\delta)}}^{1} \frac{ \d y}{\sqrt{1-(1-(1-\delta)^{n-1}) y^2 -(1-\delta)^{n-1}y^{n-1}}}.
\end{align*}
Let us denote the function $ j_{\delta}(y) : =  1-(1-(1-\delta)^{n-1}) y^2 -(1-\delta)^{n-1}y^{n-1}$. We notice that the function $ j_{\delta}(y) $ converges to $1 - y^{n-1}$, as $\delta \to 0$, pointwise. Thus
\begin{align*}
\frac{1}{ \sqrt{ j_{\delta}(y) } } \to
\frac{1}{ \sqrt{ 1-y^{n-1} } } \quad  \text{as} \ \delta \to 0.
\end{align*}
Here, we observe that for every $n>1$ the integral of the limiting function is finite
\begin{align*}
\int_{\bar c}^{1} \frac{\d y}{ \sqrt{ 1-y^{n-1} } }
 =
\int_{0}^{1- \bar c} \frac{\d z}{ \sqrt{ 1-(1-z)^{n-1} } }
=
\int_{0}^{1-\bar c} \frac{\d z}{ \sqrt{ (n-1)z } }
=
\frac{2(1-\bar c)^{\frac{1}{2}}}{\sqrt{n-1}} < \infty,
\end{align*}
where in the first equality we used the change of variable $z = 1-y$ and in the second equality we used Taylor expansion of first order of the function $ (1-z)^{n-1} $ around $z_0 = 0$. Here, $\bar c$ is a positive fixed number, less than $1$.
The function $j_{\delta}(y)$ can be written as
\begin{align*}
j_{\delta}(y)
&= 1-(1-(1-\delta)^{n-1}) y^2 -(1-\delta)^{n-1}y^{n-1}
= 1 - y^2 + (1-\delta)^{n-1}( y^2 - y^{n-1} )
\end{align*}
and we notice that the function $ \frac{1}{ \sqrt{ j_{\delta}(y) } } $ is an increasing function with respect to $\delta$, for $n>3$. Then, by Monotone Convergence Theorem,
we take that
\begin{align*}
\int_{  \frac{\lambda}{b(1-\delta)} }^{1 } \frac{\d y}{ \sqrt{ j_{\delta}(y) } } \to
\int_{\bar c}^{1} \frac{\d y}{ \sqrt{ 1-y^{n-1} } } \quad  \text{as} \ \delta \to 0.
\end{align*}
In this step, we  invert the function $\Sigma ' (0)$ in order to find the formula of $\mathcal{J}_0$. By the formula of $\Sigma'(0)$ and the definitions of $\lambda$ and $b$ we observe that
\begin{align*}
\lim_{\mathcal{E}_0 \to \infty}\frac{\lambda}{b} \bigg( 1 + \frac{ \mathcal{E}_0 }{2} \bigg)^{\frac{1}{n-1}}
=
1,\quad \textrm{i.e.} \quad
\mathcal{O} (\frac{\lambda}{b}) \bigg( 1 + \frac{ \mathcal{E}_0 }{2} \bigg)^{\frac{1}{n-1}}
=
\mathcal{O}(1), \quad \text{as} \ \mathcal{E}_0 \to \infty.
\end{align*}
Therefore
\begin{align*}
\Sigma'(0)
=1 + \frac{a}{2^{\frac{1}{n-1}}} \bigg( 2 + \mathcal{E}_0 \bigg)^{\frac{1}{n-1}}
+ \mathcal{O}(1)
&= 1+ a \bigg[ \bigg( \frac{ \mathcal{E}_0 }{2} \bigg)^{\frac{1}{n-1}}
+
 \frac{1}{n-1} \bigg( \frac{ \mathcal{E}_0 }{2} \bigg)^{\frac{2-n}{n-1}}  \bigg]
+
\mathcal{O}(1)\\
&=
a  \bigg( \frac{ \mathcal{E}_0 }{2} \bigg)^{\frac{1}{n-1}}
+
\underbrace{ \frac{a}{n-1} \bigg( \frac{ \mathcal{E}_0 }{2} \bigg)^{\frac{2-n}{n-1}}
+
\mathcal{O}(1)}_{ = \mathcal{O}(1)},
\end{align*}
where in the second equality we used the Taylor expansion of first order with respect to $x$ around $0$, evaluated at $x=2$ of the function
$h(x) := \big( x + \mathcal{E}_0 \big)^{\frac{1}{n-1}}$.
Thus, by the inverse function theorem
\begin{align*}
( \Sigma^{-1})'(\Sigma(0))
=
\frac{1}{
a  \big( \frac{ \mathcal{E}_0 }{2} \big)^{\frac{1}{n-1}}
+ \mathcal{O}(1)}
&=
a^{-1}  \bigg( \frac{ \mathcal{E}_0 }{2} \bigg)^{-\frac{1}{n-1}} \frac{1}{
1
+a^{-1}  \big( \frac{ \mathcal{E}_0 }{2} \big)^{-\frac{1}{n-1}} \mathcal{O}(1)}\\
& =
a^{-1}  \bigg( \frac{ \mathcal{E}_0 }{2} \bigg)^{-\frac{1}{n-1}}
\big( 1 + \mathcal{O}( \mathcal{E}_0^{-\frac{1}{n-1}} )  \big),
\end{align*}
equivalently
\begin{align*}
( \Sigma^{-1})'(\Sigma(0))
 =
\frac{2^{\frac{1}{n-1}}}{a}   \mathcal{E}_0^{-\frac{1}{n-1}}
 +
\frac{2^{\frac{1}{n-1}}}{a}  \mathcal{E}_0^{-\frac{1}{n-1}}
 \mathcal{O}( \mathcal{E}_0^{-\frac{1}{n-1}} ).
\end{align*}
Hence, the formula of $\mathcal{J}_0(\Theta)$ is the primitive of the above with respect to $\Theta$. That is
\begin{align*}
\mathcal{J}_0 (\Theta)
=
\frac{2^{\frac{1}{n-1}}}{a}  \Theta \mathcal{E}_0^{-\frac{1}{n-1}}
 +
\frac{2^{\frac{1}{n-1}}}{a} \Theta \mathcal{E}_0^{-\frac{1}{n-1}}
 \mathcal{O}( \mathcal{E}_0^{-\frac{1}{n-1}} ),
\end{align*}
where $\mathcal{E}_0 = |v_1 - v_2|^2$. This completes the proof of the proposition.
\end{proof}

We are now ready to give the proof of the main proposition of this section which is about finding the formula \eqref{2.4} of the collision kernel $B$.
\begin{proof}[Proof of Proposition \ref{P3.2}]

The formula of $\mathcal{J}_0$ by \eqref{2.8.} is
\begin{align*}
\mathcal{J}_0 (\Theta)
=
\frac{2^{\frac{1}{n-1}}}{a}\bigg(
 1
 + \mathcal{O}( \mathcal{E}_0^{-\frac{1}{n-1}} ) \bigg)
 \Theta \mathcal{E}_0^{-\frac{1}{n-1}}
 ,
\end{align*}
for any $n \in (3,5]$, for large relative velocities $|v_1-v_2|>0$ and 
$a$ is a fixed real number,
where $\mathcal{E}_0 = |v_1 - v_2|^2$.
Then the derivative with respect to $\Theta$ is
\begin{align*}
\partial_{\Theta}\mathcal{J}_0 (\Theta)
=
\frac{2^{\frac{1}{n-1}}}{a}\bigg(
 1
 + \mathcal{O}( \mathcal{E}_0^{-\frac{1}{n-1}} ) \bigg)
  \mathcal{E}_0^{-\frac{1}{n-1}}.
\end{align*}
Thus by the definition of the collision kernel in dimension $3$, which is the formula \eqref{8.3} and by replacing $\mathcal{E}_0 = |v_1 - v_2|^2$ we take the formula \eqref{2.4}.
Furthermore
\begin{align*}
\bigg| \mathcal{B}&(v_1 - v_2, \Theta) - \frac{2^{\frac{2}{n-1}}}{a^2}  \Theta (\sin \Theta)^{-1} |v_1-v_2|^{ \frac{n-5}{n-1} } \bigg| \\
& =
\bigg|  \frac{2^{\frac{2}{n-1}}}{a^2} \Theta (\sin \Theta)^{-1} \bigg[  |v_1-v_2|^{ \frac{n-5}{n-1} }
+
 \mathcal{O}( |v_1-v_2|^{ \frac{n-7}{n-1} })
 -   |v_1-v_2|^{ \frac{n-5}{n-1} }  \bigg] \bigg| \\
 & =
 \bigg|  \frac{2^{\frac{2}{n-1}}}{a^2} \Theta (\sin \Theta)^{-1}
 \mathcal{O}( |v_1-v_2|^{ \frac{n-7}{n-1} })
 \bigg| \\
 & =
 \frac{2^{\frac{2}{n-1}}}{a^2} \Theta (\sin \Theta)^{-1}
 \mathcal{O}( |v_1-v_2|^{ \frac{n-7}{n-1} }),
\end{align*}
as required. The boundedness for compact sets of $v_1-v_2$ and $\Theta$ follows from continuity as \cite {Gallagher2013}, this implies global boundedness with the estimates above.
\end{proof}
Additionally, we state a lemma for the asymptotic behaviour of the angle $\Theta$, when $\mathcal{E}_0 \to \infty$.
\begin{lemma}
\begin{align*}
\lim_{\mathcal{E}_0 \to \infty} \Theta ( \mathcal{E}_0, \mathcal{J}_0) = \frac{\pi}{2} \quad \forall \mathcal{J}_0 \in (0,1].
\end{align*}
\end{lemma}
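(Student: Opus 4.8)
The plan is to read the limit directly off the exact representation \eqref{2.1.} of the deviation angle,
\[
\Theta = \arcsin \mathcal{J}_0 + \int_{\lambda}^{x_0} \frac{\d x}{\sqrt{1 - x^2 - (x/b)^{n-1}}},
\]
with $x_0,\lambda,b$ as introduced right after \eqref{2.1.}, and to show that this integral tends to $\tfrac{\pi}{2}-\arcsin\mathcal{J}_0$ as $\mathcal{E}_0\to\infty$, so that $\Theta\to\tfrac{\pi}{2}$. The integrand carries an integrable square-root singularity at the upper endpoint $x_0=x_0(\mathcal{E}_0)$, which moves with $\mathcal{E}_0$, so first I would freeze it by substituting $x=x_0 y$; using the defining relation $1-x_0^2=(x_0/b)^{n-1}$ this rewrites the integral as
\[
\int_{\lambda/x_0}^{1} \frac{x_0\,\d y}{\sqrt{g_{x_0}(y)}}, \qquad g_{x_0}(y):=1-x_0^2y^2-(1-x_0^2)y^{n-1},
\]
an integral over an interval whose only singular endpoint is now the fixed point $y=1$, with a $y$-independent singularity type $\sqrt{1-y}$.

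Next I would record the elementary asymptotics as $\mathcal{E}_0\to\infty$. Since $b=\mathcal{J}_0\big(K+\mathcal{E}_0/(2K)\big)^{1/(n-1)}\to\infty$, the relation $1-x_0^2=(x_0/b)^{n-1}$ forces $1-x_0^2\to 0$, hence $x_0\to 1$; moreover $\lambda=\mathcal{J}_0(1+2K/\mathcal{E}_0)^{1/2}\to\mathcal{J}_0$, so $\lambda/x_0\to\mathcal{J}_0$, and for $\mathcal{J}_0\in(0,1)$ one has $\lambda/x_0<1$ for all large $\mathcal{E}_0$. Consequently $g_{x_0}(y)\to 1-y^2$ pointwise on $(\mathcal{J}_0,1)$, and it only remains to pass the limit under the integral sign.

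The main — and essentially the only — obstacle is a majorant for the integrand that is integrable uniformly in $\mathcal{E}_0$ near $y=1$, i.e. controlling the moving singularity. I expect this to be handled by the elementary inequality $y^{n-1}\le y$ on $[0,1]$ (valid since $n-1\ge 1$), which gives, uniformly in $x_0\in[0,1]$,
\[
g_{x_0}(y)\ \ge\ 1-x_0^2y^2-(1-x_0^2)y\ =\ (1-y)(1+x_0^2y)\ \ge\ 1-y \qquad (y\in[0,1]),
\]
so that $x_0/\sqrt{g_{x_0}(y)}\le (1-y)^{-1/2}\in L^1(0,1)$. Then, writing $\int_{\lambda/x_0}^1=\int_{\mathcal{J}_0}^1\mathbf{1}_{\{y\ge\lambda/x_0\}}$ and invoking dominated convergence,
\[
\int_{\lambda/x_0}^{1} \frac{x_0\,\d y}{\sqrt{g_{x_0}(y)}}\ \xrightarrow[\ \mathcal{E}_0\to\infty\ ]{}\ \int_{\mathcal{J}_0}^{1}\frac{\d y}{\sqrt{1-y^2}}\ =\ \frac{\pi}{2}-\arcsin\mathcal{J}_0,
\]
and therefore $\Theta(\mathcal{E}_0,\mathcal{J}_0)\to\arcsin\mathcal{J}_0+\big(\tfrac{\pi}{2}-\arcsin\mathcal{J}_0\big)=\tfrac{\pi}{2}$. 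The endpoint case $\mathcal{J}_0=1$ is then immediate, since the limiting integral is over a degenerate interval while $\arcsin\mathcal{J}_0=\tfrac{\pi}{2}$ already; alternatively it follows because the convergence above is locally uniform in $\mathcal{J}_0$.
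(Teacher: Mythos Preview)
Your proof is correct. It takes a genuinely different route from the paper's, though both land on the same computation $\arcsin\mathcal{J}_0+\bigl(\tfrac{\pi}{2}-\arcsin\mathcal{J}_0\bigr)$.

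The paper starts instead from the representation of \cite[Section~8.3]{Gallagher2013},
\[
\Theta=\arcsin\mathcal{J}_0+\mathcal{J}_0\int_{\rho_*}^{1}\frac{\d\rho}{\rho^2\sqrt{1-\tfrac{4U(\rho)}{\mathcal{E}_0}-\tfrac{\mathcal{J}_0^2}{\rho^2}}},
\]
and then performs the change of variable $\sin^2\varphi=\tfrac{4U(\rho)}{\mathcal{E}_0}+\tfrac{\mathcal{J}_0^2}{\rho^2}$, which maps the integral to one over the fixed interval $[\arcsin\mathcal{J}_0,\tfrac{\pi}{2}]$ with a \emph{bounded} integrand; the limit $\mathcal{E}_0\to\infty$ then reduces the integrand to $1$ and the result follows. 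You instead stay with the Cercignani representation \eqref{2.1.}, normalise by $x=x_0y$, and confront the $\sqrt{1-y}$ singularity head-on by producing the explicit uniform majorant via $y^{n-1}\le y$ and the factorisation $(1-y)(1+x_0^2y)$. Your route is more elementary (no second integral formula or clever substitution is needed) and makes the dominated-convergence step fully explicit, which the paper's proof does not; the paper's route is slightly shorter once the $\varphi$-substitution is in hand, since the bounded integrand makes the limit immediate. Both handle $\mathcal{J}_0=1$ only by the degenerate-interval observation.
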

\begin{proof}
Going through the two body reduction and using the formula of \cite[Section 8.3]{Gallagher2013}:
\begin{align*}
\Theta
=
\Theta (\mathcal{E}_0, \mathcal{J}_0)
:=
\arcsin{\mathcal{J}_0}
+
\mathcal{J}_0 \int_{\rho_*}^{1} \frac{ \d\rho}{\rho^2 \sqrt{1- \frac{4 U (\rho) }{\mathcal{E}_0} - \frac{\mathcal{J}^2_0 }{\rho^2}}},
\end{align*}
with
\begin{align*}
\rho_*= \rho_*(\mathcal{E}_0, \mathcal{J}_0):= \max \{ \rho \in (0,1)  :   4 U (\rho) + \frac{ \mathcal{E}_0 \mathcal{J}^2_0}{\rho^2} = \mathcal{E}_0 \}.
\end{align*}
This formula for $\Theta$ becomes
\begin{align} \label{2.2}
\Theta
=
\Theta (\mathcal{E}_0, \mathcal{J}_0) = \arcsin{\mathcal{J}_0}
+
\int_{ \arcsin{\mathcal{J}_0}}^{\frac{\pi}{2}}
\frac{ \rho \mathcal{J}_0 \mathcal{E}_0 \sin \varphi }{\mathcal{E}_0 \mathcal{J}_0^2 - 2 \rho^3 U ' (\rho)  }\d \varphi
\end{align}
after the change of variable
\begin{align*}
\sin^2 \varphi := \frac{4 U (\rho)}{\mathcal{E}_0} + \frac{\mathcal{J}_0^2}{\rho^2}.
\end{align*}
By replacing $\sin \varphi$ in $\eqref{2.2}$, by the above formula, we get
\begin{align*}
\Theta
=
\Theta (\mathcal{E}_0, \mathcal{J}_0) = \arcsin{\mathcal{J}_0}
+
\int_{ \arcsin{\mathcal{J}_0}}^{\frac{\pi}{2}}
\frac{ \rho \mathcal{J}_0 \mathcal{E}_0  \sqrt{  \frac{4 U (\rho)}{\mathcal{E}_0} + \frac{\mathcal{J}_0^2}{\rho^2}  } }{\mathcal{E}_0 \mathcal{J}_0^2 - 2 \rho^3 U ' (\rho)  }\d \varphi .
\end{align*}
Now, by taking the limit $\mathcal{E}_0 \to \infty$ we get
\begin{align*}
\lim_{\mathcal{E}_0 \to \infty} \Theta ( \mathcal{E}_0, \mathcal{J}_0)
=
\arcsin{\mathcal{J}_0}
+
\int_{ \arcsin{\mathcal{J}_0}}^{\frac{\pi}{2}} 1 \d \varphi
=
\arcsin{\mathcal{J}_0} + \frac{\pi}{2} - \arcsin{\mathcal{J}_0}
= \frac{\pi}{2},
\end{align*}
which concludes the proof of the lemma.
\end{proof}

\section{Quantitative errors estimates for the Rayleigh gas}\label{pthm1}
\subsection{Energy estimates}
To prove Theorem \ref{thm1} we
  extend the work of \cite{Matthies2018} by looking  at error estimates over longer times considering long-term dynamics. We are doing this by finding a quantitative error for the difference $ |P_t(S)- \hat{P_t}(S)|$, detailed expressions are  in Proposition $\ref{2.3.1.}$ below. We first provide quantitative estimates for the momentum of the tagged particle, the expected number of collisions and  estimates for recollisions. It will be enough to provide all these estimates for the idealised evolution. We  will use that $\mathcal{B}$ is the collision kernel which by $\eqref{2.4}$ has the form
\begin{align*}
\mathcal{B}(v_1-v_2, \Theta)
&=
\frac{2^{\frac{2}{n-1}}}{a^2} \left[ |v_1-v_2|^{ \frac{n-5}{n-1} }
+
 \mathcal{O}( |v_1-v_2|^{ \frac{n-7}{n-1} }) \right]  \Theta (\sin \Theta)^{-1},
\end{align*}
for $n \in (3,5]$ and large $|v_1 - v_2| >0$ as well as its boundedness for bounded $|v_1 - v_2| $ .
Consider
\begin{align*}
M_f(t):=\int_{{\R}^3 \times \R^3} f_t(x,v) (1+|v|)\, \mathrm{d}v\,  \d x
\end{align*}
and also define the momentum
\begin{align*}
D_f(t) := \int_{\mathds{R}^3 \times \R^3} f_t(x,v)|v| \, \mathrm{d}v\, \d x.
\end{align*}
Then, we observe that
\begin{align*}
M_f(t)=& \int_{\mathds{R}^3 \times \R^3}   f_t(x,v)\, \mathrm{d}v\, \d x +
\int_{\mathds{R}^3 \times \R^3}   f_t(x,v)|v| \, \mathrm{d}v\, \d x
= 1+ \int_{\mathds{R}^3 \times \R^3}   f_t(x,v)|v| \, \mathrm{d}v\, \d x ,
\end{align*}
since $f_t$ is a probability density. By taking the derivative of $D_f(t)$ we get
\begin{align*}
\frac{\d}{\d t}D_f(t) &= \frac{\d}{\d t}\int_{\mathds{R}^3 \times \R^3}   f_t(x,v)|v|\, \mathrm{d}v\, \d x
=\int_{\mathds{R}^3 \times \R^3}  \partial_t f_t(x,v)|v| \, \mathrm{d}v \, \d x\\
&= c\int_{\mathds{R}^3 \times \R^3} |v| \int_{\mathbb{S}^2 \times \R^3}   f_t(x,v') g_0(\bar{v}') \mathcal{B}(v-\bar{v}, \omega) \, \d \bar{v} \,  \d {\omega} \, \mathrm{d}v\, \d x \\
&\quad- c \int_{\mathds{R}^3 \times \R^3} |v| f_t(x,v) \int_{\mathbb{S}^2 \times \R^3}   g_0(\bar{v}) \mathcal{B}(v-\bar{v}, \omega)\, \d \bar{v}\, \d {\omega}\, \mathrm{d}v\, \d x,
\end{align*}
by integration by parts and by using the linear Boltzmann equation $\eqref{LBE}$.
\begin{proposition} The post-collisional velocity is bounded by
 \begin{align}\label{2.1}
|v| \le  |v'| + |\bar{v}'|,
\end{align}
where $ v'$ and $\bar{v}'$ are the pre-collisional velocities.
\end{proposition}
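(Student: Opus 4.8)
The plan is to deduce the bound from the two conservation laws satisfied by the two‑particle interaction, namely conservation of total momentum and of total kinetic energy; in fact only the energy identity is needed. Recall that in the definition of $Q^+$ the pre‑collisional velocities are $v'=v+\omega\cdot(\bar v-v)\,\omega$ and $\bar v'=\bar v-\omega\cdot(\bar v-v)\,\omega$, while the post‑collisional pair is $(v,\bar v)$. One might first try to use momentum conservation $v'+\bar v'=v+\bar v$ together with the triangle inequality, but that only yields $|v|\le|v'|+|\bar v'|+|\bar v|$, which is too weak; the correct tool is the energy identity, which absorbs the extra term.

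First I would record the elementary identity $|v'|^2+|\bar v'|^2=|v|^2+|\bar v|^2$. Setting $\alpha:=\omega\cdot(\bar v-v)$, so that $v'=v+\alpha\omega$ and $\bar v'=\bar v-\alpha\omega$ with $|\omega|=1$, one expands
\[
|v+\alpha\omega|^2+|\bar v-\alpha\omega|^2=|v|^2+|\bar v|^2+2\alpha\,\omega\cdot(v-\bar v)+2\alpha^2=|v|^2+|\bar v|^2-2\alpha^2+2\alpha^2,
\]
which gives the claimed conservation of kinetic energy. (Alternatively, this is exactly the statement made in the introduction that the short range potential dynamics preserve overall kinetic energy, applied to the scattering map.)

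Second, from this identity I would discard the non‑negative term $|\bar v|^2$ on the right‑hand side to obtain $|v|^2\le|v'|^2+|\bar v'|^2$, and then bound $|v'|^2+|\bar v'|^2\le|v'|^2+2|v'|\,|\bar v'|+|\bar v'|^2=(|v'|+|\bar v'|)^2$, since $2|v'|\,|\bar v'|\ge0$. Taking square roots yields $|v|\le|v'|+|\bar v'|$, which is \eqref{2.1}.

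There is essentially no obstacle here: the statement is a one‑line consequence of energy conservation together with the non‑negativity of $|\bar v|^2$ and of $|v'|\,|\bar v'|$. The only point worth minimal care is to keep track of which pair is pre‑ and which is post‑collisional, but since both $v'+\bar v'=v+\bar v$ and $|v'|^2+|\bar v'|^2=|v|^2+|\bar v|^2$ hold, the estimate is in fact symmetric in the two pairs, and the same argument also gives $|\bar v|\le|v'|+|\bar v'|$.
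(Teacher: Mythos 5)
Your proof is correct and follows essentially the same route as the paper: drop $|\bar v|^2\ge 0$ from the kinetic energy identity and bound $|v'|^2+|\bar v'|^2\le(|v'|+|\bar v'|)^2$. The only difference is that you verify the energy conservation $|v|^2+|\bar v|^2=|v'|^2+|\bar v'|^2$ explicitly for the scattering map, whereas the paper takes the inequality $|v|^2\le|v'|^2+|\bar v'|^2$ as given.
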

\begin{proof}
We have that
\begin{align*}
 |v|^2 \le |v'|^2 + |\bar{v}'|^2 \le  |v'|^2 + 2 |v'| |\bar{v}'| + |\bar{v}'|^2 = ( |v'| + |\bar{v}'|  )^2.
\end{align*}
Therefore, the inequality $\eqref{2.1}$ follows.
\end{proof}
\begin{proposition}\label{momentum}
The momentum $D_f(t)$ grows linearly with $t$, i.e. for any $t \ge 0$, for $q>4$ and for $2 < n \le 5$
\begin{align*}
D_f(t)
\le
\tilde{C}_{q,n} ct.
 \end{align*}
\end{proposition}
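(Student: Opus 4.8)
The starting point is the differentiated identity for $D_f(t)$ displayed just above the statement, which splits into a positive gain contribution and a negative loss contribution. The single idea on which everything hinges is that one must \emph{keep} the loss term and extract the cancellation between the two: a cruder bound that discards the loss term and only uses $|v|\le|v'|+|\bar v'|$ would leave a factor $D_f(t)$ on the right-hand side and hence, by Gr\"onwall, give exponential rather than linear growth. So the plan is to symmetrise the gain term by the collisional change of variables and then subtract.

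\textbf{Step 1 (change of variables).} For fixed $x$ and $\omega$, the map $(v,\bar v)\mapsto(v',\bar v')$ with $v'=v+\omega\cdot(\bar v-v)\omega$, $\bar v'=\bar v-\omega\cdot(\bar v-v)\omega$ is a linear involution of $\R^3\times\R^3$ with Jacobian $1$ that preserves $|v-\bar v|$, and hence also $B(v-\bar v,\omega)$. Applying it inside the gain term of $\frac{\d}{\d t}D_f(t)$ turns $f_t(x,v')g_0(\bar v')$ into $f_t(x,v)g_0(\bar v)$ and $|v|$ into $|v^\ast|$, where $v^\ast=v^\ast(v,\bar v,\omega)$ is the post-collisional tagged velocity produced from the pre-collisional pair $(v,\bar v)$. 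Recombining with the (unchanged) loss term gives
\begin{align*}
\frac{\d}{\d t}D_f(t)=c\int_{\R^3\times\R^3}\int_{\mathbb{S}^2\times\R^3}\big(|v^\ast|-|v|\big)\,f_t(x,v)\,g_0(\bar v)\,B(v-\bar v,\omega)\,\d\bar v\,\d\omega\,\d v\,\d x.
\end{align*}
\textbf{Step 2 (pointwise increment bound).} Applying \eqref{2.1} with $v^\ast$ in the role of the post-collisional velocity and $(v,\bar v)$ in the role of the pre-collisional pair yields $|v^\ast|\le|v|+|\bar v|$, hence $|v^\ast|-|v|\le|\bar v|$; since $B\ge0$ this removes all dependence on the tagged velocity $|v|$ outside $B$:
\begin{align*}
\frac{\d}{\d t}D_f(t)\le c\int_{\R^3\times\R^3}f_t(x,v)\int_{\R^3}|\bar v|\,g_0(\bar v)\Big(\int_{\mathbb{S}^2}B(v-\bar v,\omega)\,\d\omega\Big)\,\d\bar v\,\d v\,\d x.
\end{align*}

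\textbf{Step 3 (bounding the kernel and concluding).} Writing $\d\omega=\sin\Theta\,\d\Theta\,\d\psi$ and using Corollary \ref{CorCOB} together with the angular cut-off $\Theta\in(0,\tfrac{\pi}{2}-\sigma)$, one bounds $\int_{\mathbb{S}^2}B(v-\bar v,\omega)\,\d\omega$ by a constant uniform in $v,\bar v$: for $|v-\bar v|$ large this is because the exponent $\tfrac{n-5}{n-1}$ is $\le0$ when $n\le5$, so $|v-\bar v|^{\frac{n-5}{n-1}}\le1$, while for $|v-\bar v|$ in a compact set $B$ is bounded by continuity; in both cases the surviving factor $\Theta(\sin\Theta)^{-1}\sin\Theta=\Theta$ is integrable over the truncated sphere. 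Then, using that $f_t$ is a probability density (so $\int f_t\,\d v\,\d x=1$) and that $g_0(v)|v|\in L^1(\R^3)$ — which is exactly the content of $q>4$ in \eqref{g0}, since $|v|g_0(v)\sim|v|^{1-q}$ — one obtains $\frac{\d}{\d t}D_f(t)\le\tilde C_{q,n}\,c$ with $\tilde C_{q,n}$ depending only on $n,q,K,\sigma$. Integrating in time gives $D_f(t)\le D_f(0)+\tilde C_{q,n}\,ct$; the constant $D_f(0)=M_f(0)-1$ is finite by the standing hypothesis $f_0(x,v)(1+|v|)\in L^1$ and is absorbed into $\tilde C_{q,n}$ (equivalently, one may normalise $f_0$ to have vanishing mean velocity). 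The only genuinely delicate point is the verification in Step 1 that the collisional substitution has unit Jacobian and leaves $B$ invariant — a standard micro-reversibility computation — because without the resulting exact cancellation $|v^\ast|-|v|\le|\bar v|$ the estimate degrades to exponential growth.
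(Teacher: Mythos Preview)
Your proof is correct and follows the same approach as the paper. The paper first applies the inequality $|v|\le|v'|+|\bar v'|$ in the gain term, observes that the $|v'|$ piece cancels exactly with the loss term ``by conservation of mass'' (i.e.\ after the pre/post-collisional change of variables), and is left with the $|\bar v'|$ piece; you instead perform the change of variables first and phrase the same cancellation as $|v^\ast|-|v|\le|\bar v|$, which is arguably cleaner. In Step~3 the paper splits the $\bar v$-integral into $|v-\bar v|\ge1$ and $|v-\bar v|<1$ and plugs the asymptotic expansion of $B$ into both pieces, whereas you simply invoke boundedness of $B$ on compact sets for the near-diagonal part; both routes give the same constant $\tilde C_{q,n}c$ on the right of the differential inequality, and both arrive at $D_f(t)\le D_f(0)+\tilde C_{q,n}ct$ with $D_f(0)$ absorbed into the constant.
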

\begin{proof}
By using the inequality $\eqref{2.1}$, we get
\begin{align*}
\frac{\d}{\d t}D_f(t)
&\le c \int_{\mathds{R}^3 \times \R^3} (|v'| + |\bar{v}'|) \int_{\mathbb{S}^2 \times \R^3}   f_t(x,v') g_0(\bar{v}') \mathcal{B}(v-\bar{v}, \omega)\, \d \bar{v}\,   \d {\omega}\, \mathrm{d}v \, \d x \\
&\quad- c \int_{\mathds{R}^3 \times \R^3} |v| f_t(x,v) \int_{\mathbb{S}^2 \times \R^3 }  g_0(\bar{v}) \mathcal{B}(v-\bar{v}, \omega)\, \d \bar{v}\, \d {\omega}\, \mathrm{d}v\, \d x\\
& = \int_{\mathds{R}^3 \times \R^3}
   c (Q^+[f|v'|]- Q^-[f|v|]) \, \mathrm{d}v\, \d x \\
&\quad + c\int_{\mathds{R}^3 \times \R^3} \int_{\mathbb{S}^2 \times \R^3}  f_t(x,v') g_0(\bar{v}') |\bar{v}'| \mathcal{B}(v-\bar{v}, \omega)\, \d \bar{v}   \, \d {\omega}\, \mathrm{d}v \, \d x.
\end{align*}
In the last equality, the first term is equal to zero, by the conservation of mass.
Now, by using change of coordinates $\bar{v}, v \to \bar{v}',v'$ we get that the last integral becomes
\begin{align*}
&\int_{\mathds{R}^3 \times \R^3}  \int_{\mathbb{S}^2 \times \R^3}  f_t(x,v') g_0(\bar{v}') |\bar{v}'| \mathcal{B}(v-\bar{v}, \omega)\, \d \bar{v}   \, \d {\omega}\, \mathrm{d}v\, \d x \\
&= \int_{\mathds{R}^3 \times \R^3} \int_{\mathbb{S}^2 \times \R^3} f_t(x,v') g_0(\bar{v}') |\bar{v}'| \mathcal{B}(v' - \bar{v}' , \omega)\, \d \bar{v}' \,  \d {\omega}\, \mathrm{d}v'\, \d x.
\end{align*}
 Using the fact that the function $f_t$ is a probability density and that  $\mathcal{B}$ is uniformly bounded by Lemma \ref{lem:Bbounded} and Proposition \ref{P3.2} and using $g_0(.)(1+|.|) \in L^1(\R^3)$ we have that the integral can be bounded by a constant $\tilde{C}_{q,n}$ only depending on $g_0$ and $n$. Thus,
\begin{align*}
\frac{\d}{\d t}D_f(t)
\le c \tilde{C}_{q,n}.
 \end{align*}
We notice that the right hand side of the above differential inequality  does not depend on $t$.  Then, by e.g. \cite[Theorem 1.3]{Teschl2012} we get
\begin{align*}
D_f(t)
\le
\tilde{C}_{q,n} c t, \quad \textrm{for} \ q>4,\ 2< n \le5 \ \textrm{and\ for\ all} \ t\ge 0.
 \end{align*}
This means that the momentum $D_f(t)$ grows linearly with $t$.

\end{proof}
\subsection{Number of collisions}
In this section we want to find a bound for $\mathbb{E}(n(\Phi))(t)$, where $n(\Phi)$ is the number of collisions in the collision history $\Phi$. For this, we are going to find an evolution equation for $\mathbb{E}(n(\Phi))(t)$ which can be derived using the idealised equation \eqref{eq-id}.
\begin{proposition}\label{Prop:3.3}
 The expected number of collisions grows linearly with $t$. That is for any $t\ge 0$ and for $3<n\le 5$
\begin{align} \label{3.4}
\mathbb{E}(n(\Phi))(t)
& \le 1 + C c t.
\end{align}
\end{proposition}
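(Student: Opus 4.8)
The plan is to turn the idealised evolution \eqref{eq-id} into a differential inequality for
\[
\mathbb{E}(n(\Phi))(t) := \int_{\mathcal{MT}} n(\Phi)\, P_t(\Phi)\, \d\Phi
\]
and then integrate it. Differentiating in $t$ and substituting \eqref{eq-id} gives
\[
\frac{\d}{\d t}\,\mathbb{E}(n(\Phi))(t) = c\int_{\mathcal{MT}} n(\Phi)\, Q_t^+[P_t](\Phi)\,\d\Phi - c\int_{\mathcal{MT}} n(\Phi)\, P_t(\Phi)\, Q^-_\tau(\Phi)\,\d\Phi .
\]
The crucial point is the combinatorics of the gain term \eqref{eq:Q+}: its value on a tree $\Phi$ with $n(\Phi)=k\ge1$ is built from $P_t(\bar{\Phi})$ with $n(\bar{\Phi})=k-1$, so integrating a $k$-tree against $Q_t^+[P_t]$ amounts to integrating the shorter tree $\bar{\Phi}$ over the attached final collision $(\tau,\bar\nu,\bar v)$; since $\mathds{1}_{t=\tau(\Phi)}$ forces $\tau=t$ and the integration over $(\bar\nu,\bar v)$ of $g_0(\bar v)B(\cdot-\bar v,\omega)$ reproduces a loss-type factor \eqref{eq:Q-}, while $n(\Phi)=n(\bar{\Phi})+1$, one obtains
\[
\int_{\mathcal{MT}} n(\Phi)\, Q_t^+[P_t](\Phi)\,\d\Phi = \int_{\mathcal{MT}} \bigl(n(\Phi)+1\bigr)\, P_t(\Phi)\, Q^-_\tau(\Phi)\,\d\Phi .
\]
The $n(\Phi)$-contributions then cancel and there remains the clean identity
\[
\frac{\d}{\d t}\,\mathbb{E}(n(\Phi))(t) = c\int_{\mathcal{MT}} P_t(\Phi)\, Q^-_\tau(\Phi)\,\d\Phi .
\]

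Next I would bound the right-hand side. Set $\mathcal{Q}(v) := \int_{\R^3}\int_{\mathbb{S}^2} g_0(\bar v)\,B(v-\bar v,\omega)\,\d\omega\,\d\bar v$, so that $Q^-_\tau(\Phi)=\mathcal{Q}(v^{\varepsilon}(\tau^-))$ by \eqref{eq:Q-}. By Proposition \ref{P3.2} and Corollary \ref{CorCOB}, for $n\in(2,5]$ the kinetic part of $B$ is $\mathcal{O}\bigl(|v_1-v_2|^{\frac{n-5}{n-1}}\bigr)$ at large relative velocities, with exponent $\frac{n-5}{n-1}\le0$, hence bounded there, while $B$ is bounded on compacta by continuity; carrying out the angular integral as in the proof of Proposition \ref{momentum} (the angular cut-off makes $\int_{\mathbb{S}^2}B\,\d\omega$ a finite multiple of the kinetic part) and using $g_0\in L^1(\R^3)$ (valid since $q>4>3$), one obtains a constant $C$, depending only on $n$, $q$, $K$, $\sigma$ and $g_0$, with $\mathcal{Q}(v)\le C$ for all $v\in\R^3$. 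Therefore, since $P_t$ is a probability density on $\mathcal{MT}$,
\[
\frac{\d}{\d t}\,\mathbb{E}(n(\Phi))(t) \le cC ,
\]
and integrating from $t=0$, where $\mathbb{E}(n(\Phi))(0)=0$ because $P_0$ is supported on $\{n(\Phi)=0\}$, yields $\mathbb{E}(n(\Phi))(t)\le cCt$, which is in particular $\le 1+cCt$.

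I expect the only genuinely delicate step to be the rigorous derivation of the gain-term identity above: because of the distributional factor $\mathds{1}_{t=\tau(\Phi)}$ and the jump of $P_t(\Phi)$ at $t=\tau$, this should be carried out on the mild/integrated form of \eqref{eq-id}, with the change of variables $\Phi\leftrightarrow(\bar{\Phi};\tau,\bar\nu,\bar v)$ on $\mathcal{MT}$ justified exactly as in the well-posedness argument \cite[Thm 3.1]{Matthies2018}; it is essentially the same computation that gives conservation of mass $\int_{\mathcal{MT}}P_t(\Phi)\,\d\Phi\equiv1$, now performed with the extra weight $n(\Phi)$. I would also stress that the cancellation of the $n(\Phi)$-terms is precisely what makes the estimate \emph{linear} in $t$: discarding the loss term and closing the inequality by Grönwall would only give exponential growth, so retaining the exact structure of \eqref{eq-id} is essential here.
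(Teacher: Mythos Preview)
Your proof is correct and follows essentially the same approach as the paper: both split $n(\Phi)=n(\bar\Phi)+1$ in the gain term to isolate a bounded forcing term, then bound the total collision rate $\int g_0(\bar v)B\,\d\bar v\,\d\omega$ uniformly in $v$ and integrate in time. The only cosmetic difference is that the paper rewrites the computation as an inhomogeneous equation for $h_t:=n(\Phi)P_t(\Phi)$ and applies Duhamel's formula together with the $L^1$-preserving property of the idealised semigroup, whereas you perform the cancellation $\int(Q^+[h_t]-h_tQ^-_\tau)\,\d\Phi=0$ directly; both routes yield $\tfrac{\d}{\d t}\mathbb{E}(n(\Phi))(t)=c\int P_t\,Q^-_\tau\,\d\Phi\le cC$.
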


\begin{proof}
 We observe that at the initial time $t=0$ there is only the tagged particle in the tree, since we do not have any collision yet, i.e., no initial overlap. Thus,
$\mathbb{E}(n(\Phi))(0)=1.$ Furthermore, $$\mathbb{E}(n(\Phi))(t) = \int_{\mathcal{MT}} n(\Phi) P_t(\Phi)\, \d \Phi.$$
Hence,
\begin{align*}
\frac{\d}{\d t}\mathbb{E}(n(\Phi))(t) &= \frac{\d}{\d t}\int_{\mathcal{MT}} n(\Phi) P_t(\Phi)\, \d \Phi
=\int_{\mathcal{MT}}  \frac{\d}{\d t} n(\Phi) P_t(\Phi) \, \d\Phi+ \int_{\mathcal{MT}} n(\Phi) \frac{\d}{\d t}P_t(\Phi)\, \d \Phi\\
&= \int_{\mathcal{MT}} n(\Phi)c[ Q_t^+[P_t](\Phi) - Q^-_{t}[P_t](\Phi) ]\, \d \Phi,
\end{align*}
here we used equation $\eqref{eq-id}$ and the fact that $\frac{\d}{\d t} n(\Phi)=0.$
Now,
\begin{align*}
\int_{\mathcal{MT}} n(\Phi) Q_t^+[P_t](\Phi)\, \d \Phi & = \int_{\mathcal{MT}} n(\Phi) \mathds{1}_{t=\tau(\Phi)} P_t(\bar{\Phi})g_0(\bar{v}) \mathcal{B}(v^{\varepsilon}(\tau^-) - \bar{v} ,\omega) \, \d \Phi \\
&= \int_{\mathcal{MT}} n(\bar{\Phi}) \mathds{1}_{t=\tau(\Phi)} P_t(\bar{\Phi})g_0(\bar{v}) \mathcal{B}(v^{\varepsilon}(\tau^-) - \bar{v} ,\omega)\, \d \Phi\\
&\quad+ \int_{\mathcal{MT}}\mathds{1}_{t=\tau(\Phi)} P_t(\bar{\Phi})g_0(\bar{v}) \mathcal{B}(v^{\varepsilon}(\tau^-) - \bar{v} ,\omega) \, \d \Phi
\end{align*}
and
\begin{align*}
\int_{\mathcal{MT}} n(\Phi)Q^-_{t}[P_t](\Phi)  \, \d\Phi = \int_{\mathcal{MT}} n(\Phi)P_t(\Phi) \int_{\mathds{R}^3} \int_{\mathbb{S}^2} g_0(\bar{v}) \mathcal{B}(v^{\varepsilon}(\tau) - \bar{v} ,\omega)\, \d \omega\, \d \bar{v}\, \d \Phi.
\end{align*}
Thus,
\begin{align*}
\frac{\d}{\d t}\mathbb{E}(n(\Phi))(t)
&= c \int_{\mathcal{MT}} n(\Phi) \mathds{1}_{t=\tau(\Phi)} P_t(\bar{\Phi})g_0(\bar{v}) \mathcal{B}(v^{\varepsilon}(\tau^-) - \bar{v} ,\omega) \, \d \Phi \\
& \quad- c \int_{\mathcal{MT}} n(\Phi)P_t(\Phi) \int_{\mathds{R}^3} \int_{ \mathbb{S}^2 } g_0(\bar{v}) \mathcal{B}(v^{\varepsilon}(\tau) - \bar{v} ,\omega)\, \d \omega\, \d \bar{v} \, \d\Phi\\
&\quad+c  \int_{\mathcal{MT}}\mathds{1}_{t=\tau(\Phi)} P_t(\bar{\Phi})g_0(\bar{v}) \mathcal{B}(v^{\varepsilon}(\tau^-) - \bar{v} ,\omega) \, \d\Phi.
\end{align*}
We define as $h_t(\Phi) := n(\Phi) P_t(\Phi)$ and then we get
\begin{align*}
\frac{\d}{\d t}\mathbb{E}(n(\Phi))(t)
= c \int_{\mathcal{MT}} [ Q^+ [h_t](\Phi) - & Q^-_{t}[h_t](\Phi)]\, \d \Phi \\
&+ c \int_{\mathcal{MT}}\mathds{1}_{t=\tau(\Phi)} P_t(\bar{\Phi})g_0(\bar{v})
 \mathcal{B}(v^{\varepsilon}(\tau^-) - \bar{v} ,\omega)  \,  \d \Phi.
\end{align*}
Thus,
\begin{align*}
\frac{\d}{\d t}h_t(\Phi)
&=
c[ Q^+ [h_t](\Phi) -  Q^-_{t}[h_t](\Phi) ]
+ c \mathds{1}_{t=\tau(\Phi)} P_t(\bar{\Phi})g_0(\bar{v})\mathcal{B}(v^{\varepsilon}(\tau^-) - \bar{v} ,\omega).
\end{align*}
Now, let us define the forcing term $\Delta^{\varepsilon}(t):=\mathds{1}_{t=\tau(\Phi)} P_t(\bar{\Phi})g_0(\bar{v})\mathcal{B}(v^{\varepsilon}(\tau^-) - \bar{v} ,\omega).$
Then the above equation becomes
\begin{align*}
\frac{\d}{\d t}h_t(\Phi) &=c[ Q^+ [h_t](\Phi) - Q^-_{t}[h_t](\Phi)]
+c \Delta^{\varepsilon}(t).
\end{align*}
By using the variation of constant formula, we have
\begin{equation*}
h_t(\Phi) = P_t(h_0(\Phi)) + c \int_{0}^{t} P_{t-s} (  \Delta^{\varepsilon} (s))\, \d s,
\end{equation*}
where $P_t$ is the solution semigroup of equation $\eqref{eq-id}$. Integrating over $\mathcal{MT}$ we obtain
\begin{equation}\label{1.4}
\int_{\mathcal{MT}} h_t(\Phi) \, \d \Phi = \int_{\mathcal{MT}} P_t (h_0(\Phi))\, \d \Phi
+c \int_{\mathcal{MT}} \int_{0}^{t}P_{t-s} (  \Delta ^{\varepsilon}(s))\, \d s\, \mathrm{d}\Phi,
\end{equation}
with
\begin{align*}
\int_{\mathcal{MT}} \int_{0}^{t}P_{t-s} (\Delta ^{\varepsilon}(s))\, \d s\, \mathrm{d} \Phi & = \int_{\mathcal{MT}} \left| \int_{0}^{t}P_{t-s} (\Delta ^{\varepsilon}(s))\, \d s \right| \, \mathrm{d} \Phi
=  \int_{0}^{t} \left| \int_{\mathcal{MT}} P_{t-s} (\Delta ^{\varepsilon}(s)) \d \Phi \right|\, \d s \\
& =  \int_{0}^{t} \big \| P_{t-s} (\Delta ^{\varepsilon}(s)) \big\|_{L^1(\mathcal{MT})}\, \d s 
= \int_{0}^{t} \big \| \Delta ^{\varepsilon}(s) \big\|_{L^1(\mathcal{MT})}\, \d s.
\end{align*}
Here, in the second equality we used Fubini's Theorem and in the last equality that the solution semigroup $P_{t-s}$ preserves the measure. Now we will bound the term $\| \Delta ^{\varepsilon}(s) \|_{L^1(\mathcal{MT})}$,
then 
\begin{align*}
\| \Delta &^{\varepsilon}(s)\|_{L^1(\mathcal{MT})}  = \int_{\mathcal{MT}} \mathds{1}_{s=\tau(\Phi)} P_s(\bar{\Phi})g_0(\bar{v}) \mathcal{B}(v^{\varepsilon}(\tau^-) - \bar{v} ,\omega)\, \mathrm{d} \Phi\\
=&
\int_{\mathcal{MT}} \int_{\mathds{R}^3} \int_{\mathbb{S}^2} P_s(\bar{\Phi})g_0(\bar{v})
\mathcal{B}(v^{\varepsilon}(\tau^-) - \bar{v} ,\omega) \, \d \omega\, \d \bar{v}\, \mathrm{d} \bar{\Phi} \leq C
\end{align*}
using the uniform boundedness of $\mathcal{B}$ from Lemma \ref{lem:Bbounded}  and Proposition \ref{P3.2} as well as that $P_s$ is a probability measure and that $g_0 \in L^1(\R^3)$. 

Then, we get
\begin{align*}
\int_{\mathcal{MT}} h_t(\Phi)\, \d \Phi &=1
+c \int_{0}^{t} \big \| \Delta ^{\varepsilon}(s) \big\|_{L^1(\mathcal{MT})} \, \d s
\le 1 + C c t , \quad \textrm{for \ all \ } t\ge 0.
\end{align*}
Thus, the expected number of collisions grows linearly with $t$. That is
\begin{align*}
\mathbb{E}(n(\Phi))(t)
& \le 1 + C c t , \quad \textrm{for \ all \ } t\ge 0 \ \textrm{and \ for\  } 3<n\le5.
\end{align*}
This completes the proof of the proposition.
\end{proof} 
\subsection{The set of re-collisions }
The aim of this section is to estimate the probability of the set of re-collisions, with the number of collisions $n(\Phi)$ in the collision history $\Phi$ be bounded by a decreasing function $M(\varepsilon)$ and with the maximal velocity in the tree bounded by another decreasing function $V(\varepsilon)$, with $\lim_{\varepsilon \to 0} M(\varepsilon) = \lim_{\varepsilon \to 0} V(\varepsilon) = \infty$.
\begin{proposition} \label{2.1.2.} Let $\tilde{\eta} >0$ sufficiently small, $\tilde{\delta}>0$ and let the decreasing functions $V, M : (0,\infty) \to [0, \infty)$ such that $\lim_{\varepsilon \to 0}V(\varepsilon) = \lim_{\varepsilon \to 0}M(\varepsilon) = \infty,$ then the probability of the set of re-collisions with bounded number of collisions and bounded velocities is bounded. That is
\begin{align*}
 P_t \big(&\text{ re-collisions with} \  n(\Phi)  \le M(\varepsilon) \ and\ \mathcal{V}(\Phi) <V(\varepsilon) \big)\\
 &\le
C M(\varepsilon) 
 \left( \tilde \delta+ \frac{\tilde \eta V(\varepsilon) \varepsilon}{\tilde{\delta}^2}  +  \frac{V(\varepsilon)\varepsilon }{
\tilde{\eta}+ \tilde{\delta} } +  
M(\varepsilon) \left(  \frac{\tilde \eta  V(\varepsilon) \varepsilon}{\tilde \delta} + 
\frac{V(\varepsilon) \varepsilon}{\tilde{\eta}+ \sqrt{\tilde{\delta}}  }   \right)\right).
  \numberthis  \label{4.1}
 \end{align*}
\end{proposition}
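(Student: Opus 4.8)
The plan is to establish \eqref{4.1} in two moves: a union bound that reduces the estimate to a single ``re-collision pair'', and then an elementary but quantitatively careful geometric argument for that pair, in which the auxiliary scales $\tilde\eta$ and $\tilde\delta$ serve to separate the configurations that can force a re-collision. \emph{Reduction to one pair.} On the event $\{n(\Phi)\le M(\varepsilon)\}$ the collision history involves at most $M(\varepsilon)$ background particles, and the trajectory $t\mapsto x(t)$ of the tagged particle consists of at most $M(\varepsilon)+1$ straight segments joined across the short scattering intervals $(t_j,t_j+\tau^{*})$. By definition a re-collision means that for some $j$ there is $t\in[0,T]\setminus(t_j,t_j+\tau^{*})$ with $|x(t)-x_j(t)|\le\varepsilon$, and such a $t$ lies in one of these $\le M(\varepsilon)+1$ segments; hence there are at most $C\,M(\varepsilon)^2$ candidate pairs (a background particle together with a segment of $x(\cdot)$). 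By subadditivity of $P_t$ it then suffices to bound, uniformly in $t\in[0,T]$ and uniformly over the pair, the $P_t$-probability of a re-collision for one fixed pair by a constant times the bracket in \eqref{4.1}.

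\emph{Geometry of a fixed pair.} Fix a background particle $j$, produced at the $i$-th collision at the point $y:=x(t_i)$ with incoming velocity $\bar v$ (drawn against $g_0(\bar v)\,B(v^{\varepsilon}(t_i^-)-\bar v,\nu_i)\,\d\nu_i\,\d\bar v$), together with a segment of $x(\cdot)$ over a time window $I=[s,s']\subseteq[0,T]$ on which the tagged particle moves freely with velocity $w$, $|w|<V(\varepsilon)$. On $I$ the background particle also moves freely, with a velocity $v$ equal to $\bar v$ or to its post-collision velocity, in both cases $|v|\le 2V(\varepsilon)$ by conservation of kinetic energy; hence the relative displacement $r(t):=x(t)-x_j(t)$ is affine, $\dot r\equiv u:=w-v$ with $|u|\le 3V(\varepsilon)$, and a re-collision on $I$ forces the closest-approach distance of $r$ to be $\le\varepsilon$, with the time of closest approach within $O(\varepsilon/|u|)$ of $I$.

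\emph{Three regimes.} We now split according to $\tilde\delta$ and $\tilde\eta$. First, if the relevant transversal component of $u$ — equivalently the scattering angle that steers the background particle toward the segment, or a suitable projection of $\bar v$ — has size $<\tilde\delta$, then $\bar v$ is confined to a slab of width $O(\tilde\delta)$ or $\nu_i$ to an angular set of measure $O(\tilde\delta)$; since $g_0\in L^\infty$ has a finite first moment, $q>4$, and by Corollary~\ref{CorCOB} $B(v_1-v_2,\Theta)\le 3\frac{(2K)^{2/(n-1)}}{a^2}|v_1-v_2|^{(n-5)/(n-1)}\Theta(\sin\Theta)^{-1}$ with $\int_0^{\pi/2-\sigma}\Theta\,\d\Theta<\infty$ and $|v_1-v_2|^{(n-5)/(n-1)}$ locally integrable for $n>2$, this event has probability $O(\tilde\delta)$, which is the middle term of the bracket. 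On the complementary ``transversal'' event, distinguish whether the re-collision occurs while the tagged particle is still within the scale $\tilde\eta$ of $y$: if so, it must be re-directed into the $\varepsilon$-tube about the background particle's line by an intermediate collision, and bounding the number of intermediate collisions available in a region of scale $\tilde\eta$ against the measure, of order $(\varepsilon/\tilde\delta)^2$ up to the velocity scale, of the re-directions that hit the tube produces the term $\tilde\eta V(\varepsilon)\varepsilon/\tilde\delta^2$; if instead the re-collision occurs at scale $>\tilde\eta$, the tagged particle must ``aim back'' at the $\varepsilon$-sized particle from that distance, so that after its last re-direction the solid angle of admissible velocities is of order $(\varepsilon/\tilde\eta)^2$ while $O(TV(\varepsilon)/\tilde\eta)$ re-directions are available on $[0,T]$, giving the term $\frac{TV(\varepsilon)}{\tilde\eta}(\varepsilon/\tilde\eta)^2$. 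Summing the three contributions and multiplying by the $C\,M(\varepsilon)^2$ of the reduction yields \eqref{4.1}.

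\emph{The main obstacle.} Compared with the hard-sphere case of \cite{Matthies2018}, the genuine new difficulty is that the interaction is not instantaneous: during a collision the trajectory is curved and the excluded interval $(t_j,t_j+\tau^{*})$ is nonempty. One must therefore first control the scattering time, showing $\tau^{*}=O(\varepsilon/|v^{\varepsilon}|)$ away from grazing — this is where the cut-off $\Theta<\frac{\pi}{2}-\sigma$ enters, together with the boundedness of $B$ from Proposition~\ref{P3.2} — and that the spatial displacement during a collision is $O(\varepsilon)$, so that the piecewise-linear picture above is valid up to errors absorbed in the $\varepsilon$-scale and an interval of length $\tau^{*}$ may be discarded in the segment count. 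A secondary but pervasive difficulty is uniformity: every constant in the three regimes must be uniform over $|v|<3V(\varepsilon)$ and over the least favourable exponents $\tfrac{n-5}{n-1},\tfrac{n-7}{n-1}\le 0$ in $B$, and this is precisely where the hypotheses $n\in(2,5]$, $q>4$ and $g_0(v)(1+|v|)\in L^\infty$ are used, together with the integrability near the diagonal of $|v_1-v_2|^{(n-5)/(n-1)}$ against $g_0$.
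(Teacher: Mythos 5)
Your high-level skeleton (a union bound giving the factor $C(M(\varepsilon))^2$, a split of the re-collision time at scale $\tilde{\eta}$ after the last collision, and tube-type volume estimates of order $(\varepsilon/\tilde{\eta})^2$) is the same as the paper's, but the mechanisms you give for two of the three terms in \eqref{4.1} are not substantiated, and they are not the ones that actually produce those terms. First, in the paper the $\tilde{\delta}$ term is simply the probability that the elapsed time $s-t_k$ between the $k$-th collision and the re-collision is smaller than $\tilde{\delta}$; the point of this event is that on its complement one has the lower bound $s-t_k\geq\tilde{\delta}$, which is exactly what generates the $1/\tilde{\delta}^2$ when a spatial constraint is converted into a constraint on the incoming velocity $v_k$. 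Your $\tilde{\delta}$-regime (a transversal component of the relative velocity smaller than $\tilde{\delta}$, probability $\mathcal{O}(\tilde{\delta})$) does not play this role, and nothing in your second regime explains why the ``measure of re-directions that hit the tube'' should be $(\varepsilon/\tilde{\delta})^2$. Second, the term $\tilde{\eta}V(\varepsilon)\varepsilon/\tilde{\delta}^2$ rests on a structural observation you never make: writing the re-collision condition as in \eqref{eqn:jkrecol} and using $\bar{v}_k=v_k+\nu_k(v(t_k^-)-v_k)\cdot\nu_k$, for fixed $\nu_k$ the tagged particle's post-collision trajectory depends only on $\nu_k\cdot v_k$, while the re-colliding particle's straight-line motion depends only on the components of $v_k$ perpendicular to $\nu_k$. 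Fixing $\nu_k\cdot v_k$, the left-hand side sweeps a cylinder of volume $\sim\tilde{\eta}V(\varepsilon)\varepsilon^2$ as $s$ and the re-collision direction vary; projecting onto $\nu_k^{\perp}$ gives an area $\sim\tilde{\eta}V(\varepsilon)\varepsilon$, and rescaling by $(s-t_k)\geq\tilde{\delta}$ constrains $(v_k,\nu_k)$ to a set of measure $C\tilde{\eta}V(\varepsilon)\varepsilon/\tilde{\delta}^2$. Your substitute --- ``the number of intermediate collisions available in a region of scale $\tilde{\eta}$'' times $(\varepsilon/\tilde{\delta})^2$ --- is not a coherent estimate: the number of collisions is at most $M(\varepsilon)$, and no arithmetic in your sketch actually yields $\tilde{\eta}V(\varepsilon)\varepsilon/\tilde{\delta}^2$.

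For the bulk term your slot-counting heuristic (``$\mathcal{O}(TV(\varepsilon)/\tilde{\eta})$ re-directions are available'') should be replaced by the computation the paper does: solving the re-collision equation for the terminal velocity $v(t_j)$ confines it to a tube of radius $\varepsilon/(s-t_j)\leq\varepsilon/\tilde{\eta}$ around the curve $r(s)=(\tilde{Y}-s\bar{v}_k)/(t_j-s)$, and the bound $|\tilde{Y}|\leq 2TV(\varepsilon)$ gives $\int|r'(s)|\,\d s\leq CTV(\varepsilon)/\tilde{\eta}$, hence a velocity volume $C(TV(\varepsilon)/\tilde{\eta})(\varepsilon/\tilde{\eta})^2$; this is a curve-length estimate, not a count of collisions. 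Finally, the ``main obstacle'' you identify (finite scattering duration $\tau^*$ and curved trajectories) is beside the point for this proposition: the estimate is for the idealised distribution $P_t$, and the paper explicitly works with idealised trajectories having instantaneous point collisions that merely come within distance $\varepsilon$, so no control of $\tau^*$ is needed here. As it stands, the proposal reverse-engineers the three terms of \eqref{4.1} without supplying the constraint-splitting and volume arguments that justify them, so there is a genuine gap.
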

\begin{proof}
The main strategy here is to identify variables enforcing the re-collision  and then estimate the volume of possible velocities of the background particles or collision parameters. It is enough to consider their volumes as we have uniform $L^\infty$ bounds on their densities and on the collision kernel $\mathcal{B}$. We will consider various cases. Some estimates are easier than the qualitative estimates in \cite{Matthies2018} because  we are on the domain $\R^3$ such that there will be no re-collisions due to periodic boundary conditions on the torus. As we are estimating events for the idealised distribution,  we consider idealised trajectories with instantaneous collision events for point particles, but that come close within a distance $\varepsilon$.

There cannot be any re-collisions in $\mathcal{MT}_0 :=\{ \Phi \in \mathcal{MT}  :\ n(\Phi) = 0\}$ as there are no collisions and
 $\mathcal{MT}_1 := \{ \Phi \in \mathcal{MT} :\ n(\Phi)=1\}$ as the tagged particle and the only colliding particle are moving away from each other for all times.

Now, let $2\le j \le n(\Phi)$ and $\Phi \in \mathcal{MT}_j$ with $$\Phi= ((x_0, v_0), (t_1, \nu_1, v_1),...,(t_j, \nu_j, v_j),...,(t_n, \nu_n, v_n)).$$
Let $\Phi \in \mathcal{MT}_j \setminus R(\varepsilon)$, i.e. the collision history $\Phi$ consists of $j$ collisions and it is not re-collision free. As above the re-collision cannot occur for $k=j$ as the tagged particle and the final colliding background particle move away from each other.

In the other cases there is a re-collision with the background particle number $k$, with $1\le k <j $. In this situation, there exists re-collision, since after collision with the background particle $k$, there is always a collision with another background particle. Therefore, the tagged particle changes direction and it can collide with the background particle number $k$ again.

There is a special situation if $\Phi \in R(0)$.  Then two of the collisions inside the collision history $\Phi$, correspond to the same background particle. We have that there exist numbers $l,k$ with $2\le l\le j$ and $1\le k< l$ such that the $l^{th}$ and the $k^{th}$ collision corresponds to the same background particle. This implies that
\begin{align*}
v_l = v_k + \nu _l (v(t_l^{-}) - v_k) \cdot \nu_l,
\end{align*}
where $v_l$ and $v_k$ are the incoming velocities of the background particles before the $l^{th}$ and the $k^{th}$ collision respectively, $\nu_l$ is the collision parameter and $v(t_l^{-}) $ is the velocity of the tagged particle before the $l^{th}$ collision. Thus $v_l$ is determined by $v_k$, $\nu_l$ and $ v(t_l^{-}) $, so $v_l$ can only be in a set of zero probability with respect to $P_t$.

The main case is a  re-collision with the background particle number $k$ will happen at some time $s \in (t_{j}, T]$. It is enough to consider trees $\Phi$, where the re-collision will be at some time after the time of the last collision inside the collision history $\Phi$. The probability of all other trees with re-collisions is dominated by such trees. There are at most $M(\varepsilon)$ possibilities for $j$.
Thus there exist $s\in (t_j, T],\ \tilde{\nu}\in \mathbb{S}^2$ and $k \in \{1, \ldots, j-1\}$ such that
\begin{align}
    x(s)  +\varepsilon \tilde{\nu} =x_k  (s), \label{eqn:recoll}
\end{align}
where $ x(s)=x_0 +t_1v_0 +(t_2-t_1) v(t_1) +...+ (s-t_j)v(t_j)$
 is the position of the tagged particle and $x_k(s)= x_k(t_k) +\varepsilon \nu +(s-(t_k))\bar{v}_k$
 is the position of the re-colliding background particle.
We consider two main cases with several subcases. We split the interval $(t_j,T]$ in two parts. The first part is $(t_j,t_j+\tilde{\eta})$ which covers re-collision shortly after the last collision with $\tilde{\eta} >0$ sufficiently small to be chosen later and the bulk part $[t_j+\tilde{\eta}, T]$.

\subsubsection*{Case 1: $s\in(t_j,t_j+\tilde{\eta}) $ }

We  consider the case where $ 2 \le j \le n(\Phi)$, $1\le k < j$ and  $s \in ( t_j, t_j + \tilde{\eta} )$, with $\tilde{\eta}>0 $ small to be chosen later. In this interval, re-collisions are more likely, so we need more careful estimates.
Thus there exist $s\in (t_j,t_j+\tilde{\eta}),\ \bar{\nu}$ and $k:\ 1\le k < j$ such that
\begin{align}\label{4.8}
x(s)  +\varepsilon \bar{\nu} =x_k  (s),
\end{align}
where $ x(s)=x_0 +t_1v_0 +(t_2-t_1) v(t_1) +...+(t_k - t_{k-1})v(t_{k-1})+ (t_{k+1}- t_k) v(t_k)+...+(s-t_j)v(t_j)$ and $x_k(s)= x_k(t_k) +(s-t_k)\bar{v}_k$  with $\bar{v}_k$ the post-collisional velocity of the $k$-th background particle.
 We rewrite  \eqref{4.8} by shifting the origin to $x(t_k)$, such that we have
\begin{align*}
    (t_{k+1} - t_k)v(t_k)+(t_{k+2} - t_{k+1})v(t_{k+1})&+...+(s-t_j)v(t_j) +\varepsilon \bar{\nu}  =
     (s - t_k )\bar{v}_k. \numberthis \label{eqn:jkrecol}
\end{align*}
We note that the post-collisional velocity of the $k$-th background particle and of the tagged particle after the $k$-th collision are given by
\begin{align}\label{4.9}
    \bar{v}_k& = v_k + \nu_k (v(t_k^-)-v_k)\cdot \nu_k,\\
    v(t_k)&= v(t_k^-) - \nu_k (v(t_k^-)-v_k)\cdot \nu_k
\end{align}
and that  the later root velocities only depend on $v(t_k)$. So for fixed $\nu_k$, this implies that the left hand side of \eqref{eqn:jkrecol} only depends on $\nu_k \cdot v_k$, while the right hand side only depends on the components of $v_k$ which are perpendicular to $\nu_k$. We fix $\nu_k \cdot v_k$, then the left hand side of \eqref{eqn:jkrecol} defines a
cylinder when we vary $s \in [t_j, t_j+\tilde \eta] $ and $\tilde{\nu} \in  \mathbb{S}^2$. The three-dimensional volume of the cylinder is proportional to $\tilde \eta V(\varepsilon) \varepsilon^2$. Projecting this cylinder to the plane $\nu_k^\perp$  yields the constraint for the orthogonal components of $v_k$. Its area  in $\nu_k^\perp$  is bounded by $C\tilde \eta V(\varepsilon) \varepsilon$.

The factor  $s-t_k$ can be easily bounded from below by some $\tilde \delta>0$ with a probability $1-\tilde \delta$ for the case  $k=j-1$ and with the same probability by $\sqrt{\tilde{\delta}}>0$ for $k<j-1$.
 Then the overall volume of suitable $v_k$ and $\nu_k$ can be estimated for $k=j-1$
\begin{equation}
\mathrm{vol}(v_k,\nu_k) \le C \frac{1}{\tilde{\delta}^2}\tilde \eta V(\varepsilon) \varepsilon 
\end{equation}
and for the other $k$ by 
\begin{equation}\label{eqn:jkvolest}
\mathrm{vol}(v_k,\nu_k) \le C \frac{1}{\tilde{\delta}}\tilde \eta V(\varepsilon) \varepsilon 
\end{equation}
The overall contribution is obtained by summing over all possible $k$.

\subsubsection*{Case 2: $s\in [t_j+\tilde{\eta}, T]$ } We restrict the interval to  $s\in [t_j+\tilde{\eta}, T]$.
Then \eqref{eqn:recoll}
is equivalent to
 \begin{align*}
Y+s v(t_j)+\varepsilon \tilde{\nu} = (s-t_k)\bar{v}_k,
\end{align*}
where $Y : = (t_k -t_{k-1})v(t_{k-1}) +... +(t_j -t_{j-1})v(t_{j-1})-t_j v(t_j)$. This can be solved for the terminal velocity of the recolliding particle and projecting onto $\nu_k^\perp$ again we obtain constraints in the "cylinder" with varying radius $\frac{\varepsilon}{s-t_k}$ around the curve defined by 
\begin{align} \label{eqn:r}
    r(s):= \frac{Y+s v(t_j)}{s- t_k},
\end{align}
for $1\le k< j$.  We observe that $|Y+s v(t_j)|\leq (s-t_k)  V(\varepsilon) $. Then  we estimate its length by calculating
\begin{align}
    \frac{\d}{\d s}r(s)= -\frac{Y +s v(t_j)}{(s-t_k)^2}+ \frac{v(t_j)}{s- t_k}. \label{eqn:drdy}
\end{align}
Therefore the $2d$ volume around  the curve for $s\in [t_j+\tilde{\eta}, T]$ is bounded by
\begin{align*}
\int_{t_j+\tilde{\eta}}^{T}\left|\frac{\d}{\d s}r(s)\right|  \frac{\varepsilon}{s-t_k} \, \d s & \le \int_{t_j + \tilde{\eta}}^{T} 
\frac{(s-t_k) V(\varepsilon) \varepsilon}{(s-t_k)^3}\, \d s + \int_{t_j+ \tilde{\eta}}^{T}\frac{V(\varepsilon)\varepsilon}{(s-t_k)^2}\, \d s \\ 
&\le \int_{t_j + \tilde{\eta}}^{T} 
\frac{  V(\varepsilon) \varepsilon}{(s-t_k)^2}\, \d s + \int_{t_j+ \tilde{\eta}}^{T}\frac{V(\varepsilon)\varepsilon}{(s-t_k)^2}\, \d s  \le 
\frac{C V(\varepsilon)\varepsilon }{
\tilde{\eta}+ (t_j-t_k) 
},
\end{align*}
where again $t_j-t_{j-1} \geq \tilde{\delta}$ and  $t_j-t_{k} \geq \sqrt{\tilde{\delta}}$ for $k< j-1$.

After having investigated all the possible cases, take the probability of the set of re-collisions, with the number of collisions $n(\Phi)$ in the collision history $\Phi$ be bounded by a decreasing function $M(\varepsilon)$, with $\lim_{\varepsilon \to 0} M(\varepsilon) = \infty$, we have
\begin{align*}
 P_t \big(&\text{ re-collisions with} \ n(\Phi)  \le  M(\varepsilon) \ and \ \mathcal{V}(\Phi) <V(\varepsilon) \big)\\
 & \le  \sum_{j\ge 0} P_t \big( ( \mathcal{MT}_j \setminus \mathcal{R}(\varepsilon) \big)\\
 &=
 P_t \big( ( \mathcal{MT}_1 \setminus \mathcal{R}(\varepsilon) \big) + \sum_{j= 2}^{n(\Phi)} P_t \big( ( \mathcal{MT}_j \setminus \mathcal{R}(\varepsilon) \big)\\
 & \le
 C \sum_{j= 2}^{n(\Phi)} \left( \tilde \delta+ \frac{\tilde \eta V(\varepsilon) \varepsilon}{\tilde{\delta}^2}  +  \frac{ V(\varepsilon)\varepsilon }{
\tilde{\eta}+ \tilde{\delta}} +  
M(\varepsilon) \left(  \frac{\tilde \eta  V(\varepsilon) \varepsilon}{\tilde \delta} + 
\frac{V(\varepsilon) \varepsilon}{\tilde{\eta}+ \sqrt{\tilde{\delta}}  }   \right)\right)\\
 & \le C M(\varepsilon) 
 \left( \tilde \delta+ \frac{\tilde \eta V(\varepsilon) \varepsilon}{\tilde{\delta}^2}  +  \frac{V(\varepsilon)\varepsilon }{
\tilde{\eta}+ \tilde{\delta} } +  
M(\varepsilon) \left(  \frac{\tilde \eta  V(\varepsilon) \varepsilon}{\tilde \delta} + 
\frac{V(\varepsilon) \varepsilon}{\tilde{\eta}+ \sqrt{\tilde{\delta}}  }   \right)\right),
 \end{align*}
 as required.
\end{proof}


\subsection{Initial overlap}
 We want to estimate the set of initial overlaps  of the  supports of potentials. For the empiric distribution this can be estimated using the properties of the spatial Poisson process. Here, we estimate a potential  overlap if the idealised description had particles of diameter $\varepsilon$. For this, we have the following lemma.
\begin{lemma} \label{lem:overlap}
Let $\delta>0$, then the probability of initial overlap of the  supports of potentials at diameter $\varepsilon$ is bounded by
 \begin{align*}
&P_t\left(\{ \Phi \in \mathcal{MT} : n(\Phi) \leq M(\varepsilon) \mbox{ and }  \exists j \in \{1,..., n(\Phi) \} : \ |x_0 - x_j| \le \varepsilon \}\right)\\&\le
C \varepsilon + C \delta^2   + C M(\varepsilon)  \left( \frac{\varepsilon}{\delta}\right)^3.\numberthis \label{est:overlap}
\end{align*}
\end{lemma}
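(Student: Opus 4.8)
The plan is to reduce, for each collision index $j\in\{1,\dots,n(\Phi)\}$, the overlap condition $|x_0-x_j|\le\varepsilon$ to a constraint on the tree data, estimate its $P_t$-probability, and sum over $j$ (at most $M(\varepsilon)$ terms), treating the first collision separately and splitting the remaining ones at time $\delta$. The first step is a geometric rewriting: since the $j$-th colliding background particle moves freely with velocity $v_j$ until, at time $t_j$, it first lies at distance $\varepsilon$ from the tagged particle, its initial position is $x_j = x(t_j)+\varepsilon\,\hat n_j - t_j v_j$ for a unit vector $\hat n_j$ (equal to $\nu_j$ for hard spheres, and determined by $\nu_j$ and $\mathcal J_0$ in the short-range case), where $x(t_j)=x_0+\int_0^{t_j}v(s)\,\mathrm{d}s$ depends only on $x_0,v_0$, $t_j$ and the earlier collisions. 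Thus $|x_0-x_j|\le\varepsilon$ is equivalent to $|t_j v_j - A_j - \varepsilon\hat n_j|\le\varepsilon$ with $A_j:=\int_0^{t_j}v(s)\,\mathrm{d}s$, which forces $v_j$ into a ball of radius $2\varepsilon/t_j$ (and $\hat n_j$ into a spherical cap). Throughout I would use the global boundedness of the collision kernel from Proposition \ref{P3.2}, together with $g_0\in L^1\cap L^\infty$ and the finiteness of $\int_{\mathbb S^2}\Theta(\sin\Theta)^{-1}\,\mathrm{d}\omega$, to bound the instantaneous collision rate $cQ^-_\tau$ by $Cc$ uniformly in the tagged velocity, and to bound the $v_j$-integral of $g_0 B$ over that ball by $\min\!\big(Cc(\varepsilon/t_j)^3,\,Cc\big)$.

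For the first collision ($j=1$) one has $A_1=t_1v_0$, so the constraint reads $|v_0-v_1|\le 2\varepsilon/t_1$. Integrating the idealised first-collision density over $v_0$ (weight $f_0$, total mass $1$), over $\hat n_1$, $v_1$, and over $t_1\in(0,T]$, and using the $\min$-bound above, the $t_1$-integral of $\min\big(Cc(\varepsilon/t_1)^3,Cc\big)$ is $\mathcal{O}(c\varepsilon)$ — the contribution of $t_1\lesssim\varepsilon$ being dominated by the probability that a collision occurs before time $\varepsilon$, the rest by $c\int_\varepsilon^T(\varepsilon/t_1)^3\,\mathrm{d}t_1$. Absorbing $c$ into the constant gives the term $C\varepsilon$; the sharp $\min$ is exactly what avoids introducing a $\delta$-dependence here.

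For the later collisions ($j\ge 2$) I would split at $\delta$. If $t_j<\delta$ then $t_1<\dots<t_j<\delta$, so the event implies that at least two collisions occur in $[0,\delta]$; since the collision rate is at most $Cc$, one has $P(T_1\le\delta)\le Cc\delta$ and $P(T_2\le\delta\mid T_1=t_1)\le Cc\delta$, whence $P(\text{two collisions in }[0,\delta])\le C(c\delta)^2$, contributing $C\delta^2$. If $t_j\ge\delta$, the $v_j$-ball has radius $\le 2\varepsilon/\delta$ and hence volume $\le C(\varepsilon/\delta)^3$; conditioning on the first $j-1$ collisions and integrating the $j$-th collision density over $\hat n_j$, over $v_j$ in this ball, and over $t_j\in[\delta,T]$ yields a bound $\le C(\varepsilon/\delta)^3$ (absorbing $c$ and $T$), and summing over $2\le j\le M(\varepsilon)$ gives $CM(\varepsilon)(\varepsilon/\delta)^3$. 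Adding the three contributions and using the union bound over $j$ produces exactly \eqref{est:overlap}.

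The main obstacle is, first, the geometric reduction in the short-range-potential setting: unlike for hard spheres, $\hat n_j$ is not the apse direction $\nu_j$ but is tied to the impact parameter $\mathcal J_0$, and one must check that the extra $\varepsilon\hat n_j$ term and the associated Jacobian (already folded into $B$) do not spoil the ball estimate. Second, obtaining the clean $C\varepsilon$ for $j=1$ — rather than a $\delta$-dependent quantity — requires the sharp interpolation between the volume bound and the total-mass bound and integrating $t_1$ down to zero. Third, the $C(c\delta)^2$ estimate relies on the piecewise-deterministic, Poisson-type structure of the idealised equation \eqref{eq-id} together with the uniform upper bound on the collision rate (which in turn uses $2<n\le5$, $q>4$ and the global boundedness of $B$). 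The remaining computations are routine.
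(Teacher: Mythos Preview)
Your proposal is correct and follows essentially the same route as the paper: treat $j=1$ separately via the sharp interpolation $\int_0^T C\min\bigl(1,(\varepsilon/t)^3\bigr)\,\mathrm dt\le C\varepsilon$, and for $j\ge2$ split at time $\delta$, bounding the case $t_j<\delta$ by the probability $C\delta^2$ of two collisions in $[0,\delta]$ and the case $t_j\ge\delta$ by the velocity-ball estimate $C(\varepsilon/\delta)^3$, then sum over $j\le M(\varepsilon)$. One simplification you can take from the paper: since $P_t$ is the \emph{idealised} distribution, the trajectories are those of point particles with instantaneous collisions at a single point, so $x_j(0)=x(t_j)-t_jv_j$ exactly and there is no $\varepsilon\hat n_j$ offset to track---your concern about $\hat n_j\neq\nu_j$ in the short-range setting, while legitimate for the physical dynamics, does not arise here.
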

\begin{proof}
Let $\Phi \in \mathcal{MT}$ such that $\Phi = ((x_0,v_0),(t_1, \nu_1, v_1),..., (t_n, \nu_n, v_n ) )$, where $(x_0,v_0)\in  \mathds{R}^3\times \mathds{R}^3 $, $t_i \in [0,T]$, $\nu_i \in \mathbb{S}^2$, $v_i \in \mathds{R}^3$, for $i=1,...,n$. The condition for initial overlap of supports of the potentials is
\[|x_j(0)- x_0 | \le \varepsilon. \]
Using $x_j(0)= x(t)- t_j v_j$ in the idealised setting, this yields the condition $v_j \in B_j $ with
\[   B_j= \{ v \mid   | v - \frac{x(t)-x_0}{t_j} | \le  \frac{\varepsilon}{t_j} \}. \]
Assume $t_j \geq \delta$, then the probability of $v_j \in B_j$ is given by
\begin{align} \label{eqn:overlap1}
    \int_{B_j} g_0(v)\, \d v \leq C \left( \frac{\varepsilon}{\delta}\right)^3.
\end{align}
The probability to have a tree with  $t_2 <\delta$ can be bounded by estimating the rate of collisions for the root particle with velocity $v_0$
\[|Q^-_t(\Phi)|=\left|\int_{\mathds{R}^3} \int_{\mathbb{S}^2} g_0(\bar{v}) \mathcal{B}(v_0 - \bar{v} ,\omega)\, \d{\omega} \, \d \bar{v}\right| \leq C, \]
due to the form of $\mathcal{B}$. Hence the probability to have two collisions is bounded by
\begin{align}\label{eqn:overlap2}
    C^2 \delta^2.
\end{align}
The probability that the first colliding particle has initial overlap combines the probability of a single collision and overlap  which can be estimated by
\begin{align}\label{eqn:overlap3}
\int_0^{T}  C \min\left(1, \frac{\varepsilon^3}{t^3} \right)\,  \d t \le C \varepsilon.
 \end{align}
Combining \eqref{eqn:overlap1}, \eqref{eqn:overlap2} and \eqref{eqn:overlap3} yields the result.
 \end{proof}

\subsection{The set of binary collisions }

Collisions of multiple background particles with the tagged particle are rare even for the empirical particle dynamics. According to \cite{Gallagher2013}[Prop. 8.2.1], individual collision times are bounded by  $\tau^* \varepsilon$ with $\tau_*$ uniformly bounded on compact sets of $\mathcal{E}_0$ in  $(0, \infty) $.  On the level of the idealised distribution we can quantify estimates in terms of the minimal time interval $I(\varepsilon)$. 

\begin{lemma}\label{lem:Ieps}
Let $I$  be a  monotone and continuous function $I:[0,\infty) \to [0,\infty)$ with $\lim_{\varepsilon \to 0} I(\varepsilon)= 0$. Then
\begin{align*}
P_t\bigl(\{ \Phi \in \mathcal{MT} : n(\Phi) \leq M(\varepsilon), \Phi \mbox{ is re-collision-free and }  \vartheta(\Phi) \geq I(\varepsilon)  \}\bigr)&\le
C M(\varepsilon) I(\varepsilon).\numberthis \label{est:timediff}
\end{align*}
\end{lemma}
\begin{proof}
The rate of collisions is bounded, hence the probability to have a collision within the time $\vartheta(\Phi)$ is proportional to that time. As there are at most $M(\varepsilon)$ collisions in $\Phi$ and as each collision  is independent we obtain the bound.    
\end{proof}

\subsection{Collisions with small relative velocities}
To obtain uniform bounds on the collision times of the empiric realisation of a tree $\Phi$, we require also lower bounds on the relative velocities in each collision. The requirements rules out a small ball in velocity space around the pre-collision velocity $v(t_j^-)$ of the tagged particle.

\begin{lemma}\label{lem:Jeps}
Let $J$  be a  monotone and continuous function $J:[0,\infty) \to [0,\infty)$ with $\lim_{\varepsilon \to 0} J(\varepsilon)= 0$. Then
\begin{align*}
P_t\left(\{ \Phi \in \mathcal{MT} : n(\Phi) \leq M(\varepsilon), \Phi \mbox{ is re-collision-free and }  \iota(\Phi) < J(\varepsilon)  \}\right)&\le
C M(\varepsilon) J^3(\varepsilon).\numberthis \label{est:veldiff}
\end{align*}
\end{lemma}
\begin{proof}
The probability to have a collision such that $|v_j -v(t_j^-)|< J(\varepsilon)$, means it is enough to estimate 
$$ \int_{|v-v(t_j^-)| \leq J(\varepsilon)} g_0(v)\, \d v \leq C \left(J(\varepsilon)\right)^3 $$
for each collision. There are at most $M(\varepsilon)$ collisions in $\Phi$,  which yields the bound.    
\end{proof}
\subsection{Quantitative estimates for bad trees}
In this section, we estimate the probability of the bad trees. To do so, we have previously estimated the probability of re-collisions in the collision history and the probability of initial overlaps. It remains to find a bound for the probability of the number of collisions being unbounded and also to find a bound for the probability that the maximum velocity in the collision history being unbounded while the number of collisions is bounded.
\begin{proposition} \label{2.3.1.}  For any decreasing functions $V, M : (0,\infty) \to [0, \infty)$ such that $\lim_{\varepsilon \to 0}V(\varepsilon) = \lim_{\varepsilon \to 0}M(\varepsilon) = \infty$ and for any $q>4$, $t\in [0,T]$, $\delta>0$, $\tilde{\delta}>0$, $\tilde{\eta}>0$ and $n \in (3,5]$, the probability of the set of bad trees has the following error estimate
\begin{align}\label{est:badtrees}
P_t (\mathcal{MT} \setminus \mathcal{G}(\varepsilon))
& \le
\textnormal{Ov}(\varepsilon)
+
\textnormal{Rec}(\varepsilon)
+
\textnormal{Hi}(\varepsilon)
+
\textnormal{Vel}(\varepsilon) + \textnormal{Tri}(\varepsilon)
+ \textnormal{Slow}(\varepsilon),
\end{align}
where
\begin{enumerate}
\item$\textnormal{Ov}(\varepsilon) \le
C \varepsilon + C \delta^2   + C M(\varepsilon)  \left( \frac{\varepsilon}{\delta}\right)^3$ 
is due to the initial overlaps,
\item$\textnormal{Rec}(\varepsilon) \le
C M(\varepsilon) 
 \left( \tilde \delta+ \frac{\tilde \eta V(\varepsilon) \varepsilon}{\tilde{\delta}^2}  +  \frac{V(\varepsilon)\varepsilon }{
\tilde{\eta}+ \tilde{\delta} } +  
M(\varepsilon) \left(  \frac{\tilde \eta  V(\varepsilon) \varepsilon}{\tilde \delta} + 
\frac{V(\varepsilon) \varepsilon}{\tilde{\eta}+ \sqrt{\tilde{\delta}}  }   \right)\right) $
is due to re-collisions,
\item$ \textnormal{Hi}(\varepsilon) \le   \frac{1}{M(\varepsilon)} (1+ C ct)$ is due to the unbounded number of collisions, 
\item$  \textnormal{Vel}(\varepsilon) \le
 \frac{M_{f_0}}{V(\varepsilon)}
 + ( C_g + \tilde{C}_{q,n} ct )\frac{ M(\varepsilon) }{V(\varepsilon)} $ is due to the unbounded velocities,
 \item$  \textnormal{Tri}(\varepsilon) \le
  C M(\varepsilon) I(\varepsilon) $ is to rule small minimal collision time differences $\vartheta(\varepsilon)$, and
  \item$  \textnormal{Slow}(\varepsilon) \le
  C M(\varepsilon) (J(\varepsilon))^3 $ is to rule out collisions with relative speeds smaller than $\iota(\varepsilon)$,
 \end{enumerate}
where $C_g:= \pi C_1 \bar{R}^4 + \frac{ 4\pi C_2}{ q-4} \bar{R}^{-q+4} $,
$M_{f_0}:=  \int_{\mathds{R}^3 \times \mathds{R}^3 \setminus B(0,V(\varepsilon))} f_0(x,v) |v| \, \d x\, \d v$ and $\tilde{C}_{q,n}>0$.
\end{proposition}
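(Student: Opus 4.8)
The plan is to write the complement of the good set as the union of its failure modes and to estimate each by the corresponding term, citing the estimates already proved and carrying out only the new velocity bound. By Definition \ref{1.3.6.},
\begin{equation*}
\mathcal{MT}\setminus\mathcal{G}(\varepsilon)\subseteq\{n(\Phi)>M(\varepsilon)\}\cup\{\mathcal{V}(\Phi)\ge V(\varepsilon)\}\cup(\mathcal{MT}\setminus R(\varepsilon))\cup(\mathcal{MT}\setminus S(\varepsilon))\cup\{\Phi\ \textnormal{grazing}\}.
\end{equation*}
Since the collision kernel $B$ is supported in $\Theta\in(0,\tfrac{\pi}{2}-\sigma)$, every collision in the support of $P_t$ satisfies $\nu_j\cdot(v(t_j^-)-v_j)>0$ (with a lower bound of order $\sin\sigma$), so the set of grazing histories is $P_t$-null and may be dropped. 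I would then order the remaining four events so that the hypotheses $n(\Phi)\le M(\varepsilon)$ and $\mathcal{V}(\Phi)<V(\varepsilon)$ needed in Lemma \ref{lem:overlap} and Proposition \ref{2.1.2.} are automatically in force,
\begin{equation*}
P_t(\mathcal{MT}\setminus\mathcal{G}(\varepsilon))\le P_t\big(n(\Phi)>M(\varepsilon)\big)+P_t\big(\mathcal{V}(\Phi)\ge V(\varepsilon),\,n(\Phi)\le M(\varepsilon)\big)+\textnormal{Rec}(\varepsilon)+\textnormal{Ov}(\varepsilon),
\end{equation*}
where $\textnormal{Rec}(\varepsilon)$ is the quantity of Proposition \ref{2.1.2.} and $\textnormal{Ov}(\varepsilon)$ that of Lemma \ref{lem:overlap}; this yields items (1) and (2) verbatim.

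For the unbounded–number–of–collisions term I would apply Markov's inequality together with the linear growth from Proposition \ref{Prop:3.3},
\begin{equation*}
P_t\big(n(\Phi)>M(\varepsilon)\big)\le\frac{\mathbb{E}(n(\Phi))(t)}{M(\varepsilon)}\le\frac{1+Cct}{M(\varepsilon)},
\end{equation*}
which is item (3).

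The genuinely new estimate is item (4). Writing $\mathcal{V}(\Phi)=\max\{\max_{1\le j\le n(\Phi)}|v_j|,\ \max_{s\in[0,T]}|v(s)|\}$ and using that the tagged velocity is piecewise constant with jumps only at $t_1<\dots<t_{n(\Phi)}$, one has
\begin{equation*}
\{\mathcal{V}(\Phi)\ge V(\varepsilon)\}\subseteq\{|v_0|\ge V(\varepsilon)\}\cup\bigcup_{j=1}^{n(\Phi)}\{|v_j|\ge V(\varepsilon)\}\cup\bigcup_{j=1}^{n(\Phi)}\{|v(t_j)|\ge V(\varepsilon)\}.
\end{equation*}
The initial event is controlled by Chebyshev against $f_0$, $P_t(|v_0|\ge V(\varepsilon))\le V(\varepsilon)^{-1}\int_{\{|v|\ge V(\varepsilon)\}}|v|f_0(x,v)\,\d x\,\d v=M_{f_0}/V(\varepsilon)$. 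On $\{n(\Phi)\le M(\varepsilon)\}$ each of the two unions has at most $M(\varepsilon)$ terms, so it suffices to bound, uniformly in $j$, the probabilities $P_t(|v_j|\ge V(\varepsilon),\,n(\Phi)\ge j)$ and $P_t(|v(t_j)|\ge V(\varepsilon),\,n(\Phi)\ge j)$. For the first I would use the structure of the idealised equation \eqref{eq-id} to bound the $v_j$–marginal of $P_t$ conditional on $\{n(\Phi)\ge j\}$, reducing the relevant integral by means of the two–sided estimate of Corollary \ref{CorCOB} to the first moment $\int_{\R^3}|\bar v|g_0(\bar v)\,\d\bar v=\pi C_1\bar{R}^4+\tfrac{4\pi C_2}{q-4}\bar{R}^{-q+4}=C_g$, giving a contribution $\le C_g/V(\varepsilon)$ per collision after Chebyshev. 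For the second, the first moment of the $v(t_j)$–marginal of $P_t$ is controlled by the momentum $D_f$ at time $t_j$, which by Proposition \ref{momentum} satisfies $D_f(t_j)\le\tilde{C}_{q,n}ct_j\le\tilde{C}_{q,n}ct$, so Chebyshev gives a contribution $\le\tilde{C}_{q,n}ct/V(\varepsilon)$ per collision. Summing the at most $M(\varepsilon)$ contributions and adding the initial term yields
\begin{equation*}
P_t\big(\mathcal{V}(\Phi)\ge V(\varepsilon),\,n(\Phi)\le M(\varepsilon)\big)\le\frac{M_{f_0}}{V(\varepsilon)}+(C_g+\tilde{C}_{q,n}ct)\frac{M(\varepsilon)}{V(\varepsilon)},
\end{equation*}
which is item (4), and combining the four bounds proves \eqref{est:badtrees}.

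The main obstacle is precisely this last estimate: one must keep the dependence on $M(\varepsilon)$ \emph{linear}, which forces a separate treatment of the event that a single incoming background velocity is large from the event that the tagged particle gradually drifts to large speed (a crude energy bound $\max_s|v(s)|\le|v_0|+\sum_j|v_j|$ would cost a factor $M(\varepsilon)^2$ or $M(\varepsilon)^{3/2}$), and it requires controlling the conditional $v_j$– and $v(t_j)$–marginals of $P_t$ in the presence of the fat tail of $g_0$, which is where Corollary \ref{CorCOB}, Proposition \ref{momentum} and the integrability $q>4$ are all used. Everything else is a union bound together with direct citations of Lemma \ref{lem:overlap} and Propositions \ref{momentum}, \ref{Prop:3.3}, \ref{2.1.2.}.
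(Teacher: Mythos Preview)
Your proposal is correct and follows essentially the same route as the paper: a union decomposition of $\mathcal{MT}\setminus\mathcal{G}(\varepsilon)$ into the four failure modes, with items (1)--(3) obtained by directly citing Lemma~\ref{lem:overlap}, Proposition~\ref{2.1.2.}, and Markov's inequality combined with Proposition~\ref{Prop:3.3}, and item (4) handled by Chebyshev on the $v_0$-, $v_j$-, and $v(t_j)$-marginals together with the first moment of $g_0$ and Proposition~\ref{momentum}. The only minor difference is that for the background velocities the paper simply treats the $v_j$ as distributed according to $g_0$ and applies Chebyshev directly, whereas you (more carefully) propose to control the $B$-weighted $v_j$-marginal via Corollary~\ref{CorCOB}; both lead to the same $C_g/V(\varepsilon)$ bound per collision.
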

\begin{proof}
By the inclusion-exclusion principle we obtain
\begin{align*}
P_t (\mathcal{MT} \setminus \mathcal{G}(\varepsilon))& \le P_t (\mathcal{MT}\setminus S(\varepsilon)) + P_t ( n(\Phi)>M(\varepsilon) )\\ & + P_t \big( ( \mathcal{MT} \setminus \mathcal{G}(\varepsilon))\cap \{ \Phi :\ \Phi \in S(\varepsilon)\ \mathrm{with} \ n(\Phi) \le M(\varepsilon) \} \big)\\
& + P_t \big( \mathcal{V}(\Phi)>V(\varepsilon) \ \mathrm{with} \ n(\Phi) \le M(\varepsilon) \big)\\
&+ P_t\big( \vartheta(\Phi)< I(\varepsilon) \ \mathrm{with} \ n(\Phi) \le M(\varepsilon) \big) \\
&+ P_t\big( \iota(\Phi)< J(\varepsilon) \ \mathrm{with} \ n(\Phi) \le M(\varepsilon) \big) .
\end{align*}
The forth and fifth terms can be directly estimated by Lemmas  \ref{lem:Ieps} and \ref{lem:Jeps} respectively. We also observe that the third term can be written as
\begin{align*}
 P_t \big( ( \mathcal{MT} \setminus \mathcal{G}(\varepsilon))\cap \{ \Phi &:\ \Phi \in S(\varepsilon)\ \mathrm{with} \ n(\Phi) \le M(\varepsilon) \} \big)
 \\&=
 P_t \big(\mathrm{ recollisions \ with} \ n(\Phi) \le M(\varepsilon) \ and \ \mathcal{V}(\Phi) <V(\varepsilon) \big).\numberthis  \label{1.6}
 \end{align*}
Now, by the Markov's inequality and by $\eqref{3.4}$ we have 
\begin{align*}
P_t \big( n(\Phi)  >M(\varepsilon) \big) &\le \frac{\mathbb{E}(n(\Phi))}{M(\varepsilon)}
 \le \frac{1}{M(\varepsilon)} (1 +Cct), \numberthis \label{2.9}
\end{align*}
for all $t\ge 0$ and $n \in (3,5]$.
Also, by Proposition \ref{2.1.2.} we have
\begin{align*}
 P_t \big(\mathrm{ recollisions \ with}& \  n(\Phi)  \le M(\varepsilon) \ and \ \mathcal{V}(\Phi) <V(\varepsilon) \big)
\\ & \le
C M(\varepsilon) 
 \left( \tilde \delta+ \frac{\tilde \eta V(\varepsilon) \varepsilon}{\tilde{\delta}^2}  +  \frac{V(\varepsilon)\varepsilon }{
\tilde{\eta}+ \tilde{\delta} } +  
M(\varepsilon) \left(  \frac{\tilde \eta  V(\varepsilon) \varepsilon}{\tilde \delta} + 
\frac{V(\varepsilon) \varepsilon}{\tilde{\eta}+ \sqrt{\tilde{\delta}}  }   \right)\right). \numberthis \label{2.6}
 \end{align*}
Now, we want to find a bound for the probability $P_t \big( \mathcal{V}(\Phi)>V(\varepsilon) \ \mathrm{with} \ n(\Phi) \le M(\varepsilon) \big)$, where $\mathcal{V}(\Phi)$ is the maximum velocity on the history $\Phi$ and $V$ is a decreasing function of $\varepsilon$ with $\lim_{\varepsilon \to 0}V(\varepsilon) = \infty.$ Let us consider for the background particles that the velocity $v_j$ is i.i.d. with respect to $g_0$. Then
\begin{align*}
\int_{\mathds{R}^3 \setminus B(0,V(\varepsilon))} g_0(v) \, \d v
& \le \frac{1}{V(\varepsilon)}
  \int_{\mathds{R}^3 \setminus B(0,V(\varepsilon))} g_0(v) |v|\, \d v 
  \le \frac{1}{V(\varepsilon)}  \int_{\mathds{R}^3} g_0(v) |v|\, \d v \\
  & =
   \frac{1}{V(\varepsilon)} [ \pi C_1 \bar{R}^4 + \frac{ 4\pi C_2}{ q-4} \bar{R}^{-q+4} ]
    =:   \frac{C_g}{V(\varepsilon)},
\end{align*}
for q>4.
Therefore
\begin{align*}
\int_{\mathds{R}^3 \setminus B(0,V(\varepsilon))} g_0(v)\, \d v
& \le   \frac{C_g}{V(\varepsilon)}, \ q>4.
\end{align*}
Thus
\begin{align*}
P_t \big( |v_j| \le V(\varepsilon)  \big)= 1 - P_t \big( |v_j| >  V(\varepsilon)  \big)
\ge
 1 - \frac{ C_g}{V(\varepsilon)}.
\end{align*}
Let us consider for the tagged particle that the initial velocity $v_0$ is i.i.d. with respect to $f_0$ and assume furthermore that \[M_{f_0}:=  \int_{\mathds{R}^3 \times \mathds{R}^3 \setminus B(0,V(\varepsilon))} f_0(x,v) |v| \, \d x \, \d v \ \mathrm{and} \  \tilde{M}_{f}(t):=  \int_{\mathds{R}^3 \times \mathds{R}^3 \setminus B(0,V(\varepsilon))} f_t(x,v) |v|\, \d x\, \d v\] for $t\in [0,T]$.
Then
\begin{align*}
\int_{\mathds{R}^3 \times \mathds{R}^3 \setminus B(0,V(\varepsilon))} f_0(x, v) \, \d x \, \d v
 \le
 \frac{1}{V(\varepsilon)} \int_{\mathds{R}^3 \times \mathds{R}^3 \setminus B(0,V(\varepsilon))} f_0(x, v) |v|\, \d x \, \d v
 =
  \frac{M_{f_0}}{V(\varepsilon)}
\end{align*}
and for $t \in [0,T]$
\begin{align*}
\int_{\mathds{R}^3 \times \mathds{R}^3 \setminus B(0,V(\varepsilon))} f_t(x,v)\, \d x \, \d v
\le \frac{1}{V(\varepsilon)} \int_{\mathds{R}^3 \times \mathds{R}^3 \setminus B(0,V(\varepsilon))} f_t(x, v) |v|\, \d x\, \d v
=
\frac{\tilde{M}_f(t)}{V(\varepsilon)}.
\end{align*}
We made the above estimates, since we know that the distribution function $f$ has first moment. Thus
\begin{align*}
P_t \big( |v_0| >  V(\varepsilon) \ \mathrm{or} \ |v_1|> V(\varepsilon)\ \mathrm{or}\ ...\ \mathrm{or} \ |v_{n(\Phi)}| >  V(\varepsilon) \big)
&\le
P_t \big( |v_0| >  V(\varepsilon) \big) +   P_t \big( |v_1|> V(\varepsilon) \big) \\
&+ ... + P_t \big( |v_{n(\Phi)}| >  V(\varepsilon) \big)\\
& \le
 \frac{M_{f_0}}{V(\varepsilon)} + n(\Phi) \frac{\tilde{M}_f(t)}{V(\varepsilon)}, \ t\in [0,T].
\end{align*}
Hence
\begin{align*}
P_t \big( \mathcal{V}(\Phi)>V(\varepsilon) \ \mathrm{with} \ n(\Phi)
 \le M(\varepsilon) \big) &\le P_t \big( |v_j|>V(\varepsilon) \big)
+  P_t \big( |v(s)|>V(\varepsilon) \big)\\
&\le
 \frac{M_{f_0}}{V(\varepsilon)}
+ (C_g +  \tilde{M}_f(t) ) \frac{M(\varepsilon) }{V(\varepsilon)} \\
& \le
\frac{M_{f_0}}{V(\varepsilon)}
+ (C_g + \tilde{C}_{q,n} ct ) \frac{M(\varepsilon) }{V(\varepsilon)}, \ t \in [0,T], \numberthis \label{6.4.}
\end{align*}
by using Proposition \ref{momentum} in the last inequality, since $\tilde{M}_f(t) \le D_f(t)$.
Therefore, by summing \eqref{est:overlap}, \eqref{2.9}, \eqref{4.1} and \eqref{6.4.} and the estimates in  Lemmas  \ref{lem:Ieps} and \ref{lem:Jeps},  we obtain the desired estimate
 \begin{align*}
P_t (\mathcal{MT} \setminus \mathcal{G}(\varepsilon))&
 \le
 C \varepsilon + C \delta^2   + C M(\varepsilon)  \left( \frac{\varepsilon}{\delta}\right)^3
 +
\frac{1}{M(\varepsilon)} (1 + C ct) \\
& \quad +
C M(\varepsilon) 
 \left( \tilde \delta+ \frac{\tilde \eta V(\varepsilon) \varepsilon}{\tilde{\delta}^2}  +  \frac{V(\varepsilon)\varepsilon }{
\tilde{\eta}+ \tilde{\delta} } +  
M(\varepsilon) \left(  \frac{\tilde \eta  V(\varepsilon) \varepsilon}{\tilde \delta} + 
\frac{V(\varepsilon) \varepsilon}{\tilde{\eta}+ \sqrt{\tilde{\delta}}  }   \right)\right)
\\
 &\quad +C M(\varepsilon)\left( I(\varepsilon) + (J(\varepsilon))^3 \right)
\\
 &\quad
+ \frac{M_{f_0}}{V(\varepsilon)}
+(  C_g  + \tilde{C}_{q,n} ct )  \frac{M(\varepsilon) }{V(\varepsilon)}, \ t \in [0,T] \ \textrm{and} \ n \in (3,5].
\end{align*}
\end{proof}

\subsection{The Empirical Distribution}
On the set of good histories the particle dynamics can be stated in a similar way to \eqref{eq-id}.
We consider now the empirical distribution on collision histories $\hat{P_t^{\varepsilon}}$ defined by the dynamics of the particle system for particles with diameter $\varepsilon$. We will write $\hat{P_t}$ instead of $\hat{P_t^{\varepsilon}}$. The main result in this subsection is that $\hat{P_t}$ solves a differential equation which is similar to the idealised equation $\eqref{eq-id}$ when restricted to the set of good trees $ \mathcal{G}(\varepsilon)$. We follow the approach in  \cite{Matthies2018} which derived these equations for hard-sphere interactions and adaptions made for finite and infinite range interactions in \cite{Egginton2017}. As initial conditions are obtained from a Poisson process the equations simplify as the terms due to conditioning do not appear.

Now we define the operator $\hat{\mathcal{Q}}_t$ which mirrors the idealised operator in the empirical case. For given history $\Phi$, a time $0<t<T$ and $\varepsilon >0$, we define the function $\mathds{1}^{\varepsilon}_t[\Phi] : \R^3 \times{\mathds{R}^3} \to \{ 0,1\}$ by
\begin{align}
\mathds{1}^{\varepsilon}_t[\Phi] (\bar{x}, \bar{v}) :=
\begin{cases}
1 & \text{if\ for\ all\ } s\in (0,t),\ |x(s)-(\bar{x} + s \bar{v})|>\varepsilon \text{ or }
\\ &  \quad \text{if\ for\ all\ } s\in (0,t) \text{ with } |x(s)-(\bar{x} + s \bar{v})|\leq \varepsilon:  
\beta(\bar{x} + s \bar{v},\bar{v},s) =0 \\
0 &\text{else}.
\end{cases}
\end{align}
That is to say, $ \mathds{1}^{\varepsilon}_t[\Phi](\bar{x}, \bar{v})$ is $1$ if a background particle starting at the position $(\bar{x}, \bar{v})$ avoids colliding with the tagged particle defined by the collision history $\Phi$ up to time $t$. For a history $\Phi$, $t\ge 0$ and $\varepsilon>0$ we define the gain operator, 
\begin{align*}
\hat{\mathcal{Q}}^+_t[\hat{P_t}](\Phi):=
\begin{cases}
\delta (t- \tau) \hat{P_t}(\bar{\Phi})  g_0(v') \mathcal{B}(v(\tau^-) - v', \omega) 
& \text{if}\ n(\Phi)\ge 1\\
0 & \text{if}\ n(\Phi)=0.
\end{cases}
\end{align*}

Next we define the loss operator, which uses that there are no triple collisions for the particle dynamics. We denote by $\tau(\Phi)$ the begin of the final collision and by $T^*(\Phi) \in \mathcal{O}(\varepsilon)$ the duration of this last collision.     
\begin{align*}
\hat{\mathcal{Q}}^-_t[\hat{P_t}](\Phi):= \mathds{1}_{t\geq \tau(\Phi)+ T^*(\Phi)} 
\hat{P_t}(\Phi) 
\int_{\mathbb{S}^2} \int_{\mathds{R}^3}  g_0(\bar{v}) \mathcal{B}(v(\tau) - \bar{v}, \omega) 
\mathds{1}^{\varepsilon}_t[\Phi] (x(t)+ \varepsilon \omega, \bar{v})
\, \d \bar{v} \, \d \omega.
\end{align*}
Finally define the operator $\hat{\mathcal{Q}_t}$ as
\begin{equation*}
\hat{\mathcal{Q}_t} := \hat{\mathcal{Q}}_t^+ - \hat{\mathcal{Q}}_t^-.
\end{equation*}
\begin{theorem} For $\varepsilon$ sufficiently small and for $\Phi \in \mathcal{G(\varepsilon)}$, $\hat{P_t}$ solves the following equation
\begin{align}\label{emp-eq}
\begin{cases}
\partial_t{\hat{P_t}}(\Phi) &= c \hat{\mathcal{Q}_t}[\hat{P_t}] (\Phi) \\
\hat{P_0}(\Phi) &= \zeta(\varepsilon) f_0 (x_0, v_0) \mathds{1}_{n(\Phi)=0}
\end{cases}
\end{align}
with a correction factor for initial overlaps 
\begin{align} \label{eqn:zeta}
    \zeta(\varepsilon):=\exp(- \tfrac{4 \pi}{3}N \varepsilon^3)
\end{align} 
due to the spatially Poisson distributed initial data. 
\end{theorem}

This is a direct adaption of \cite[Theorem 4.6.]{Matthies2018} and \cite{Stone2017} for the version of Rayleigh in \eqref{eq:partic}. The same approach was also used in \cite{Matthies2018a,Egginton2017,Egginton2018}. 
Following \cite[Prop 8.2.1]{Gallagher2013}, the collision intervals for the empiric can be bounded by $C \varepsilon (\mathcal{V}(\Phi) + \frac{1}{\iota(\Phi)} ) \leq C  \varepsilon (V(\varepsilon)  + \frac{1}{J(\varepsilon)})$, if this bound is considerably smaller than $\vartheta(\Phi)$, e.g. if
\begin{align}
    \label{eqn:notriple}
    C  \varepsilon (V(\varepsilon)  + \frac{1}{J(\varepsilon)}) \ll I (\varepsilon),
\end{align}
then $\Phi$  will only undergo binary collisions.

There are two substantial simplification compared to \cite{Matthies2018}: firstly due to the Poisson set-up we do not have terms due to conditioning on the history or due to a reduction of available particles and similarly initial overlaps can be calculated directly. In the next steps we use that
the evolution defined by \eqref{emp-eq} is linear in $f_0$. We first rescale with the factor $\zeta(\varepsilon)^{-1}$ and get a simple comparison between $P_t(\Phi)$ and the rescaled $\hat{R}_t (\Phi):=\zeta(\varepsilon)^{-1}\hat{P_t}(\Phi)$ 
\begin{lemma}
    For $\varepsilon$ sufficiently small and for $\Phi \in \mathcal{G(\varepsilon)}$
    \begin{equation} \label{eqn:hP-P}
        \hat{R_t}(\Phi) \geq P_t(\Phi).
    \end{equation}
\end{lemma}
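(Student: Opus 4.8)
The plan is to argue by induction on the number of collisions $n(\Phi)$, exploiting that the set of good histories is closed under deletion of the last collision: if $\Phi\in\mathcal{G}(\varepsilon)$ then $\bar\Phi\in\mathcal{G}(\varepsilon)$, since $\bar\Phi$ has one collision fewer, no larger maximal velocity, and still lies in $R(\varepsilon)\cap S(\varepsilon)$ and is non-grazing. The structural observation that drives everything is that \eqref{eq-id} and \eqref{emp-eq} have the same shape: for $t<\tau(\Phi)$ both $P_t(\Phi)$ and $\hat{P_t}(\Phi)$ vanish; at $t=\tau(\Phi)$ each is switched on by a gain contribution proportional to the value of the shorter tree $\bar\Phi$ times the collision kernel; and for $t>\tau(\Phi)$ each satisfies a scalar linear ODE $\partial_t u=-c\,u\,(\text{loss rate})$. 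On $\mathcal{G}(\varepsilon)$ the gain operators $Q^+_t$ and $\hat{\mathcal{Q}}^+_t$ act identically (same recorded velocities, same kernel $B$), while the empirical loss operator $\hat{\mathcal{Q}}^-_t$ differs from the idealised one only by the two inserted factors $\mathds{1}_{t\ge\tau(\Phi)+T^*(\Phi)}$ and $\mathds{1}^{\varepsilon}_t[\Phi](x(t)+\varepsilon\omega,\bar v)$, both with values in $\{0,1\}$; hence the empirical loss rate is pointwise dominated by the idealised one. A one–dimensional comparison principle then gives $\hat{P_t}(\Phi)\ge P_t(\Phi)$.

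For the base case $n(\Phi)=0$ one has $\hat{P_0}(\Phi)=P_0(\Phi)=f_0(x_0,v_0)$, both gain terms vanish, and
\begin{align*}
\partial_t P_t(\Phi)=-c\,P_t(\Phi)\,Q^-_0(\Phi),\qquad
\partial_t \hat{P_t}(\Phi)=-c\,\hat{P_t}(\Phi)\,\hat q_t(\Phi),
\end{align*}
with $0\le\hat q_t(\Phi)\le Q^-_0(\Phi)$ by the remark above. Integrating in time,
\begin{align*}
\hat{P_t}(\Phi)=f_0(x_0,v_0)\exp\Big(-c\int_0^t\hat q_s(\Phi)\,\d s\Big)\ \ge\ f_0(x_0,v_0)\exp\big(-c\,t\,Q^-_0(\Phi)\big)=P_t(\Phi).
\end{align*}

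For the induction step I would assume $\hat{P_s}(\Psi)\ge P_s(\Psi)\ge 0$ for all $s\in[0,T]$ and all $\Psi\in\mathcal{G}(\varepsilon)$ with $n(\Psi)=n-1$, and fix $\Phi\in\mathcal{G}(\varepsilon)$ with $n(\Phi)=n$, so that $\bar\Phi\in\mathcal{G}(\varepsilon)$ and $n(\bar\Phi)=n-1$. Passing to the mild (Duhamel) formulation of \eqref{eq-id} and \eqref{emp-eq} relative to the two loss semigroups — the clean way to handle the $\mathds{1}_{t=\tau}$, resp.\ $\delta(t-\tau)$, gain terms — the gain contributions integrate to the jump conditions
\begin{align*}
P_{\tau^+}(\Phi)=c\,P_\tau(\bar\Phi)\,g_0(\bar v)\,B\big(v^{\varepsilon}(\tau^-)-\bar v,\omega\big),\qquad
\hat{P}_{\tau^+}(\Phi)=c\,\hat{P}_\tau(\bar\Phi)\,g_0(\bar v)\,B\big(v(\tau^-)-\bar v,\omega\big),
\end{align*}
while both functions remain $0$ for $t<\tau(\Phi)$. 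Since on $\mathcal{G}(\varepsilon)$ the pre-collisional tagged velocity and the colliding background velocity entering $B$ are exactly those recorded in $\Phi$, the kernels agree, and the inductive hypothesis gives $\hat{P}_{\tau^+}(\Phi)\ge P_{\tau^+}(\Phi)\ge 0$. For $t>\tau(\Phi)$ the gain vanishes, $\partial_t P_t(\Phi)=-c\,P_t(\Phi)\,Q^-_\tau(\Phi)$ and $\partial_t\hat{P_t}(\Phi)=-c\,\hat{P_t}(\Phi)\,\hat q_t(\Phi)$ with $0\le\hat q_t(\Phi)\le Q^-_\tau(\Phi)$, so
\begin{align*}
\hat{P_t}(\Phi)=\hat{P}_{\tau^+}(\Phi)\exp\Big(-c\int_{\tau}^{t}\hat q_s(\Phi)\,\d s\Big)\ \ge\ P_{\tau^+}(\Phi)\exp\big(-c(t-\tau)Q^-_\tau(\Phi)\big)=P_t(\Phi),
\end{align*}
which closes the induction and proves \eqref{eqn:hP-P}; well-posedness of \eqref{eq-id} and of \eqref{emp-eq} on $[0,T]$ guarantees all the objects above are defined for every $t\in[0,T]$.

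The main obstacle is not the scalar comparison but the claim that the gain operators genuinely coincide on $\mathcal{G}(\varepsilon)$: one must reconcile the instantaneous collision of the idealised point-particle description with the genuine scattering event of duration $T^*(\Phi)\in\mathcal{O}(\varepsilon)$ in the particle dynamics, i.e.\ check that on a good tree the velocity $v^{\varepsilon}(\tau^-)$ appearing in $Q^+_t$ and the velocity $v(\tau^-)$ appearing in $\hat{\mathcal{Q}}^+_t$ refer to the same quantity, and that the outgoing background velocity matches the one stored in $\Phi$, so that $B\big(v^{\varepsilon}(\tau^-)-\bar v,\omega\big)=B\big(v(\tau^-)-\bar v,\omega\big)$. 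This is precisely where the hypothesis that $\varepsilon$ is sufficiently small and the good-tree conditions (re-collision freeness, non-grazing, bounded velocities, no initial overlap) enter, in the spirit of \cite[Thm 4.6]{Matthies2018} and \cite{Stone2017}; the rest is routine integration of scalar linear ODEs.
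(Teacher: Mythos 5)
Your proof is correct and follows essentially the same route as the paper: induction over $n(\Phi)$, using that the gain contributions at $t=\tau(\Phi)$ involve the same kernel applied to $\bar\Phi$ (so the inductive hypothesis propagates the inequality across the jump) and that the empirical loss rate is dominated by the idealised one because of the extra indicator factors. The only cosmetic difference is that you integrate each scalar loss ODE explicitly and compare exponentials, whereas the paper runs a Gronwall-type differential inequality for the difference $\hat{P_t}(\Phi)-P_t(\Phi)$.
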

 \begin{proof}
We use the differential equations \eqref{eq-id} and \eqref{emp-eq} for the idealised and the empirical evolution.  For a fixed $\Phi$ we observe that  
\begin{equation} \label{eq:Q+identity}
    \hat{\mathcal{Q}}^+_t[P_t](\Phi)= \mathcal{Q}^+_t[\hat{R_t}](\Phi)
\end{equation}
and that if we write
     \begin{align*}
     \hat{\mathcal{Q}}^-_t[\hat{R_t}](\Phi)&= \hat{L}_t(\Phi)\hat{R_t} (\Phi), \\
     \mathcal{Q}^-_t[P_t](\Phi)&= L_t(\Phi) P_t (\Phi),
     \end{align*}
where $$\hat{L}_t(\Phi):= \mathds{1}_{t\geq \tau(\Phi)+ T^*(\Phi)}    
\int_{\mathbb{S}^2} \int_{\mathds{R}^3}  g_0(\bar{v}) \mathcal{B}(v(\tau) - \bar{v}, \omega) 
\mathds{1}^{\varepsilon}_t[\Phi] (x(t)+ \varepsilon \omega, \bar{v})
\, \d \bar{v} \, \d \omega$$
and 
$$ L_t(\Phi) := \int_{\mathds{R}^3} \int_{\mathbb{S}^2} g_0(\bar{v})  \mathcal{B}(v^{\varepsilon}(\tau) - \bar{v} ,\omega)\, \d{\omega}\, \d \bar{v},$$
then for $t>\tau(\Phi)$ \[  L_t(\Phi) \geq \hat{L}_t(\Phi). \]
Then for a fixed $\Phi$ and  $t>\tau(\Phi)$ we obtain a scalar ordinary differential inequality for  $\hat{R_t}(\Phi) -P_t(\Phi)$
\begin{align*}
    \partial_t  (\hat{R_t}(\Phi) -P_t(\Phi))&=- c\hat{L}_t(\Phi)\hat{R_t} (\Phi)
   + c L_t(\Phi) P_t (\Phi) \\&  
   = -cL_t(\Phi) (\hat{R_t}(\Phi) - P_t(\Phi)) 
+ c[L_t(\Phi) -\hat{L}_t(\Phi)]  \hat{R_t}(\Phi) \\& \geq   -cL_t(\Phi) (\hat{R_t}(\Phi) -P_t(\Phi)). 
\end{align*}
The differential inequality yields then 
\begin{equation} \label{eqn:pineq}
    \hat{R_t}(\Phi) -P_t(\Phi)\geq \exp\left(-c\int_\tau^t L_s(\Phi) \d s\right) (\hat{R_\tau}(\Phi) -P_\tau(\Phi)). 
\end{equation}
So we obtain \eqref{eqn:hP-P} if $\hat{R_\tau}(\Phi) -P_\tau(\Phi)\geq 0$. The $n(\Phi)=0$ holds for $\tau=0$ due to the initial conditions in \eqref{eq-id} and \eqref{emp-eq}. For all other good trees this follows by induction over $n(\Phi)$ with  \eqref{eq:Q+identity} as the gain operator $\mathcal{Q}^+_t[P](\Phi) $ is monotone in $P$.
 \end{proof}   
We then obtain quantitative convergence estimates.
\begin{proposition}
\label{Bound}
There is a uniform constant $C$  such that for every $t \in [0,T]$ and $S \subset \mathcal{MT}$ measurable,
\begin{align*}
 \sup_{S \subset \mathcal{MT}  }|P_t(S)- \hat{P_t}(S)|
&\le 
 C \varepsilon + C \delta^2   + C M(\varepsilon)  \left( \frac{\varepsilon}{\delta}\right)^3
 +
\frac{1}{M(\varepsilon)} (1 + C ct) \\
& \quad +
C M(\varepsilon) 
 \left( \tilde \delta+ \frac{\tilde \eta V(\varepsilon) \varepsilon}{\tilde{\delta}^2}  +  \frac{V(\varepsilon)\varepsilon }{
\tilde{\eta}+ \tilde{\delta} } +  
M(\varepsilon) \left(  \frac{\tilde \eta  V(\varepsilon) \varepsilon}{\tilde \delta} + 
\frac{V(\varepsilon) \varepsilon}{\tilde{\eta}+ \sqrt{\tilde{\delta}}  }   \right)\right)\\
 &\quad
+ \frac{M_{f_0}}{V(\varepsilon)}
+(  C_g  + C ct )  \frac{M(\varepsilon) }{V(\varepsilon)} \\
 &\quad + C M(\varepsilon) ( I(\varepsilon) + (J(\varepsilon)^3)
+ (1- \zeta(\varepsilon)). \numberthis \label{bound}
\end{align*}    
\end{proposition} 
\begin{proof}
     Let  $S \subset \mathcal{MT}$ be measurable, then
\begin{align}
P_t(S)- \hat{P_t}(S) &= P_t(S \cap \mathcal{G(\varepsilon)}) + P_t(S \setminus \mathcal{G(\varepsilon)}) - \hat{P_t}(S \cap \mathcal{G(\varepsilon)}) - \hat{P_t}(S \setminus \mathcal{G(\varepsilon)})  \nonumber\\
& \le  P_t(S \cap \mathcal{G(\varepsilon)}) + P_t(S \setminus \mathcal{G(\varepsilon)}) - \hat{P_t}(S \cap \mathcal{G(\varepsilon)}) \nonumber\\&=  P_t(S \setminus \mathcal{G(\varepsilon)})
+ \underbrace{P_t(S \cap \mathcal{G(\varepsilon)})-\hat{R_t}(S \cap \mathcal{G(\varepsilon)})}_{\leq 0}  + (\hat{R}_t(S \cap \mathcal{G(\varepsilon)})-\hat{P_t}(S \cap \mathcal{G(\varepsilon)}) ) \nonumber\\
&\leq P_t( \mathcal{MT} \setminus \mathcal{G(\varepsilon)}) +(1- \zeta(\varepsilon)) \label{eqn:boundbybad}
.
\end{align}
We obtain further bounds  by applying  \eqref{eqn:boundbybad} to complements. 
\begin{align}
\hat{P_t}(S) -P_t(S) &
=  (1 - \hat{P_t}( \mathcal{MT} \setminus S  ) ) - (1- P_t (  \mathcal{MT} \setminus S )  )  \nonumber\\
& =  P_t (  \mathcal{MT} \setminus S ) - \hat{P_t}( \mathcal{MT} \setminus S  )  \nonumber\\
& \leq 
P_t( \mathcal{MT} \setminus \mathcal{G(\varepsilon)})  +(1- \zeta(\varepsilon)) 
\label{eqn:lower}.
\end{align}
By Proposition \ref{2.3.1.} for $\varepsilon$ sufficiently small,
\begin{align}
P_t(\mathcal{MT} \setminus \mathcal{G}(\varepsilon))
& \le
 C \varepsilon + C \delta^2   + C M(\varepsilon)  \left( \frac{\varepsilon}{\delta}\right)^3
 +
\frac{1}{M(\varepsilon)} (1 + C ct)  \nonumber\\
& \quad +
C M(\varepsilon) 
 \left( \tilde \delta+ \frac{\tilde \eta V(\varepsilon) \varepsilon}{\tilde{\delta}^2}  +  \frac{V(\varepsilon)\varepsilon }{
\tilde{\eta}+ \tilde{\delta} } +  
M(\varepsilon) \left(  \frac{\tilde \eta  V(\varepsilon) \varepsilon}{\tilde \delta} + 
\frac{V(\varepsilon) \varepsilon}{\tilde{\eta}+ \sqrt{\tilde{\delta}}  }   \right)\right) \nonumber\\
 &\quad
+ \frac{M_{f_0}}{V(\varepsilon)}
+(  C_g  + \tilde{C}_{q,n} ct )  \frac{M(\varepsilon) }{V(\varepsilon)}
+ C M(\varepsilon) ( I(\varepsilon) + (J(\varepsilon)^3). \label{eq:upper}
\end{align}
Then the estimate \eqref{bound} follows by combining \eqref{eqn:boundbybad} and \eqref{eqn:lower} with \eqref{eq:upper}.
 \end{proof}
\subsection{Making time $T$ large}\label{largeT}
The aim of this section is to make time $T$ large, as $\varepsilon$ tends to be very small. We are doing this by equating the leading order terms of the bound of the probability of bad trees in \eqref{bound} and making them small whereas also ensuring that \eqref{eqn:notriple} holds. Therefore we have the following lemma. 

\begin{lemma}
We can choose the time $T$ and the parameter $c$ in Proposition $\ref{Bound}$ coupled with the form
\begin{align*}
c T = \varepsilon^{\frac{4 m}{3} - \frac{2}{9}}
\end{align*} which becomes large, as $\varepsilon$ becomes small, for the values of $m$ such that $ 0 < m < \frac{1}{6}.$
\end{lemma}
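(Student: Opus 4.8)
The plan is to feed into the estimate of Proposition~\ref{Bound} --- equivalently, into the four blocks $\textnormal{Ov},\textnormal{Rec},\textnormal{Hi},\textnormal{Vel}$ of \eqref{est:badtrees}--\eqref{bound} --- the scaling ansatz
\[
M(\varepsilon)=\varepsilon^{-\mu},\qquad V(\varepsilon)=\varepsilon^{-\nu},\qquad \delta=\varepsilon^{\rho},\qquad \tilde\delta=\varepsilon^{\tilde\rho},\qquad \tilde\eta=\varepsilon^{\eta},\qquad c=\varepsilon^{-c_0},\qquad T=\varepsilon^{-t_0},
\]
and to choose the exponents so that each of the four blocks is $\mathcal{O}(\varepsilon^m)$, uniformly for $t\in[0,T]$ (bounding every occurrence of $ct$ by $cT=\varepsilon^{-(c_0+t_0)}$). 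Insisting that $\mu,\nu,\rho,\tilde\rho,\eta>0$ ensures $M(\varepsilon),V(\varepsilon)\to\infty$ and $\delta,\tilde\delta,\tilde\eta\to0$, as required in Definition~\ref{1.3.6.} and Proposition~\ref{2.1.2.}, and $N$ is then fixed through the Boltzmann--Grad relation $N\varepsilon^2=c$.

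Writing $\kappa:=c_0+t_0$, the requirement that every summand in \eqref{bound} be $\lesssim\varepsilon^m$ becomes a short list of linear inequalities in the exponents: from $\textnormal{Ov}$ one needs $m\le1$, $2\rho\ge m$ and $3-3\rho-\mu\ge m$; from $\textnormal{Hi}$, $\mu-\kappa\ge m$; from $\textnormal{Vel}$, $\nu\ge m$ and $\nu-\mu-\kappa\ge m$; and from $\textnormal{Rec}$, $\tilde\rho-2\mu\ge m$, $\eta+1-\nu-2\mu-2\tilde\rho\ge m$ and $2-t_0-2\mu-\nu-3\eta\ge m$. I would then solve this small linear programme by saturating the ``soft'' constraints, e.g. $\rho=m/2$, $\mu=m+\kappa$, $\nu=2m+2\kappa$, $\tilde\rho=3m+2\kappa$ and $\eta=2\mu+\nu+2\tilde\rho-1+m=11m+8\kappa-1$; substituting these into the last $\textnormal{Rec}$ inequality makes all the $M,V,\delta,\tilde\delta,\tilde\eta$ dependence cancel and leaves the single relation
\[
29\,t_0+28\,c_0\le 5-38m .
\]
Taking $T$ as large as this permits, i.e. equality, gives $t_0+\tfrac{28}{29}c_0=\tfrac{5-38m}{29}$, that is, $c^{28/29}T=\varepsilon^{-(t_0+\frac{28}{29}c_0)}=\varepsilon^{\frac{38m}{29}-\frac{5}{29}}$, which is the claimed coupling, and Proposition~\ref{Bound} then yields $\sup_{S}|P_t(S)-\hat P_t(S)|\le C\varepsilon^m$ on $[0,T]$.

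Finally, $c^{28/29}T=\varepsilon^{(38m-5)/29}\to+\infty$ as $\varepsilon\to0$ precisely when $38m-5<0$, i.e. $m<\tfrac5{38}$; and one needs $m>0$ for $\varepsilon^m$ to decay, giving the range $0<m<\tfrac5{38}$. It only remains to check that in this range the auxiliary exponents can be taken strictly positive: for instance with $c_0=0$ one has $\kappa=t_0=\tfrac{5-38m}{29}\in(0,\tfrac5{29})$, whence $\rho=m/2>0$, $\mu=m+\kappa>0$, $\eta=\tfrac{15m+11}{29}>0$, and the only remaining upper bound, $\mu\le 3-3\rho-m$ (equivalently $\kappa\le 3-\tfrac72 m$), holds comfortably.

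The main obstacle is exactly this simultaneous balancing. The collision count $M(\varepsilon)$ is pushed \emph{up} by the high-collision term $\textnormal{Hi}\sim cT/M(\varepsilon)$ and \emph{down} by the recollision terms, which carry a factor $(M(\varepsilon))^2$, while $\textnormal{Vel}$ couples $M(\varepsilon)$, $V(\varepsilon)$ and $cT$ at once; identifying which of the eight constraints are active, and verifying feasibility of the rest, is what produces the particular exponents $\tfrac{28}{29}$ and $\tfrac5{29}$. Any less careful choice of parameters still makes the error tend to zero, but with a strictly worse relation between $T$ and $c$.
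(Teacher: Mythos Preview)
Your proof is correct and takes essentially the same approach as the paper: both balance the five leading terms of the estimate in Proposition~\ref{Bound} (namely $cT/M$, $M^2\tilde\delta$, $M^2TV\varepsilon^2/\tilde\eta^3$, $\varepsilon M^2V\tilde\eta/\tilde\delta^2$, and $cTM/V$) against one another and then set their common size equal to $\varepsilon^m$. The paper does this by directly equating the five terms and solving algebraically for $M,V,\tilde\delta,\tilde\eta$ in terms of $c,T,\varepsilon$, arriving at $cT/M=c^{28/38}T^{29/38}\varepsilon^{5/38}$; you do it by writing every quantity as a power of $\varepsilon$ and saturating the resulting linear constraints, which is the same optimisation in slightly more systematic clothing. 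Your additional flexibility of a free exponent $c_0$ is a minor bonus but not a different argument.
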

\begin{proof}
In this step, we want to make the time $cT$ large such that $\lim_{ \varepsilon \to 0} cT(\varepsilon) = \infty.$
By considering only the leading order terms in \eqref{bound} and for $\tilde \eta = \tilde \delta$ we have
 \begin{align}\label{balance}
\frac{cT}{M(\varepsilon)}
= M(\varepsilon)\tilde{\delta}
= \frac{M^2(\varepsilon) V(\varepsilon)\varepsilon}{\sqrt{\tilde{\delta}}}
=  \frac{cTM(\varepsilon)}{V(\varepsilon)}.
\end{align}
We solve this system algebraically and start by
\begin{align*}
\frac{cT}{M(\varepsilon)}
=
\frac{c T M(\varepsilon)}{V(\varepsilon)}
&\Leftrightarrow
M(\varepsilon)  =  \sqrt{ V(\varepsilon)} \numberthis \label{M},
\end{align*}
and consider
\begin{align}
M(\varepsilon)\tilde{\delta}
=
\frac{\varepsilon M^2(\varepsilon) V(\varepsilon)}{\sqrt{\tilde \delta}}
&\Leftrightarrow
 \tilde \delta^{\frac{3}{2}}   = \varepsilon M^3(\varepsilon)\Leftrightarrow
 \tilde \delta  =  \varepsilon^{\frac{2}{3}} M^2(\varepsilon). \label{delta}
\end{align}
Then with \eqref{M} and \eqref{delta} we have
\begin{align*}
\frac{cT}{M(\varepsilon)}
=
M(\varepsilon)\tilde{\delta} &
\Leftrightarrow cT=M^4(\varepsilon) \varepsilon^{\frac{2}{3}} \\
 &
\Leftrightarrow
M(\varepsilon) =  (cT)^{\frac{1}{4}} \varepsilon^{-\frac{1}{6}}.
\numberthis \label{V}
\end{align*}
Now we want to make all the terms in \eqref{balance}, i.e. the leading terms of \eqref{bound}, small. We observe that by the choices of $M(\varepsilon)$, $V(\varepsilon)$, $\tilde \eta$, and $\tilde \delta$
above, all the terms in \eqref{balance} are equal to
$$\frac{cT}{M(\varepsilon)}
=(cT)^{\frac{3}{4}} \varepsilon^{\frac{1}{6}}.$$
Thus, in order to make it small, we equate it with an $\varepsilon ^{m}$ small, i.e.
\begin{align}\label{7.5}
\varepsilon^{m} =
( cT)^{\frac{3}{4}} \varepsilon^{\frac{1}{6}}
\Leftrightarrow
c T = \varepsilon^{\frac{4 m}{3} - \frac{2}{9}}.
\end{align}
Next, we find the values of $m$ so that $cT$ is large. Thus it needs to satisfy
\begin{align*}
 \frac{4 m}{3} < \frac{2}{9}
 \Leftrightarrow
 0 <   m < \frac{1}{6}.
\end{align*}
The term $0<1-\zeta(\varepsilon) \leq C c \varepsilon$ is smaller than 
$\varepsilon^{1/6}$ for any admissible choice of $c$. It can be easily checked that all further terms in \eqref{bound} are of higher order by choosing $\delta= \sqrt{\varepsilon}$. Similarly, if $I(\varepsilon) = \sqrt{\varepsilon}$ and $J(\varepsilon)= \varepsilon^{1/3}$  then the  $\textnormal{Tri}(\varepsilon)$ and
$\textnormal{Slow}(\varepsilon)$ are also of higher order compared to the
 $\textnormal{Rec}(\varepsilon)$. We note that with this choice \eqref{eqn:notriple} holds. 
\end{proof}

Now, we have all the tools we need to prove Theorem $\ref{thm1}$.
\begin{proof}[Proof of Theorem $\ref{thm1}$]
We consider the set of all collision histories such that at time $t$ the tagged particle is in the set $\Omega\subset \mathds{R}^3 \times \mathds{R}^3$, that is
 $$S_t(\Omega) : = \{ \Phi \in \mathcal{MT} : (x(t),v(t))\in \Omega  \}. $$
By \cite[Theorem 3.1]{Matthies2018} it holds true that $$ \int_{\Omega} f_t(x,v) \, \d x\, \d v = \int_{S_t(\Omega)} P_t(\Phi)\, \d \Phi = P_t(S_t(\Omega))$$ and by the definition of the empirical distribution
 $$ \int_{\Omega} \hat{f}^N_t(x,v)\, \d x \, \d v = \int_{S_t(\Omega)} \hat{P}^{\varepsilon}_t(\Phi)\, \d \Phi = \hat{P}^{\varepsilon}_t(S_t(\Omega)).$$ Then by Theorem \ref{Bound} above, we get that for any $t\in [0,T_{\varepsilon}]$ and for any $ \Omega \subset \mathds{R}^3 \times \mathds{R}^3$
 \begin{align*}
 \lim_{N \to \infty}
\big| \int_{\Omega} \hat{f}^N_t(x,v) - f_t (x,v)\, \d x\, \d v \big|
&=\lim_{\varepsilon \to 0} |\hat{P}^{\varepsilon}_t (S_t(\Omega)) - P_t(S_t(\Omega)) | \\
&\le \lim_{\varepsilon \to 0} \sup_{S \subset \mathcal{MT}  }
| \hat{P}^{\varepsilon}_t (S) - P_t(S) | =0,
 \end{align*}
which completes the proof of the theorem, since for any $t\in [0,T_{\varepsilon}]$
\begin{align*}
\| \hat{f}^N_t - f_t \|_{L^1(\R^3 \times \R^3)}
&= 2 \int_{f_t^N \ge f_t}
\hat{f}^N_t (x,v) - f_t(x,v)\, \d x \, \d v \\
&\le
2 \sup_{\Omega \subset \R^3 \times \R^3  }
\big| \int_{\Omega} \hat{f}^N_t(x,v) - f_t (x,v)\, \d x\, \d v \big| \to 0,
\end{align*}
as $N \to \infty$, by above.
\end{proof}
\section{From Linear Boltzmann to the fractional diffusion equation}\label{pthm2}
The aim of this section is to give the proof of Theorem \ref{thm2}. We are doing so by adapting results from \cite{Mellet2011} and combine them with Theorem \ref{thm1}. First, we are splitting the linear Boltzmann operator into the gain and loss term and write the gain term via the Carleman representation in order to integrate over the $v_2'\in E_{v_1,v_1'}$ and $v_1' \in \mathds{R}^3$ variables instead of $\omega\in \mathbb{S}^2$ and $v_2\in \mathds{R}^3$. Then, we are finding the behaviour of the gain and loss term, $K(f)(v_1)$ and $\nu(v_1)$ respectively, for large velocities $|v_1|$ and for $\Theta \in (0, \frac{\pi}{2})$. Additionally, we appropriately choose $\alpha$ and $\beta$ so that \cite[Theorem 3.2]{Mellet2011} can be applied, which is a theorem which derives a fractional diffusion equation from the linear Boltzmann equation. Finally, we combine this result with Theorem \ref{thm1} and obtain the proof of the full derivation, i.e. the proof of Theorem \ref{thm2}.
The gain term of the linear Boltzmann collision operator is
\begin{align*}
Q^+(f)(v_1) = \int_{\R^3} \int_{\mathbb{S}^2} f(v_1') g_0(v_2') \ \mathcal{B} (v_1-v_2, \omega) \, \d  \omega \, \d v_2,
\end{align*}
which represents the probability that a particle after collision has velocity $v_1$. Furthermore, $\omega$ is the orthonormal vector and gives the direction of the apse line, which is the line that bisects the angle between $v_1-v_2$ and $v_1' - v_2'$.
The idea of Carleman representation, first introduced by Carleman \cite{carleman57}, is to parameterise the gain term $Q^+$ by the variables $v_1'$, $v_2'$ instead of $v_2$, $\omega$.~Rather than parameterising the sphere $\mathbb{S}_{v_1,v_2}$, which is defined as the set of all possible $v'$ which result from a collision between one particle
with velocity $v_1$ and another particle with velocity $v_2$, i.e. the sphere with a diagonal the line segment which connects $v_1$ and $v_2$, one considers the set $E_{v_1,v_1'}$ which is the set of all $v_2'$ which could be the velocity after collision of one particle, if the other velocity after collision is $v_1'$ and one of the velocities before collision is $v_1$. See Fig. \ref{figCarl}. The set $E_{v_1,v_1'}$ is the hyperplane orthogonal to $v_1-v_1'$ and containing $v_1$ and we are writing it as the set 
\begin{align}
    E_{v_1,v_1'} : = \{ v_2' \in \R^3 \ : \ (v_2'-v_1)\cdot (v_1-v_1') = 0 \}, \label{eqn:carlemandef}
\end{align}
where $(v_2-v_1)\cdot (v_1-v_1')$ is the usual inner product such that 
\[(v_2-v_1)\cdot (v_1-v_1') = \langle v_2-v_1, v_1-v_1' \rangle = |v_2-v_1| |v_1-v_1'| \cos (v_2-v_1, v_1-v_1').\]
See \cite[Section 3]{Wennberg1997} for a further discussion of this.

\begin{figure}[h!]
\begin{center}
\begin{tikzpicture}[
dot/.style={
  fill,
  circle,
  inner sep=1pt
  }
]

\begin{scope}[x=2cm,y=2cm]
  
\draw[-]  (-1.5,-1/2) -- (1.8,-1/2);  

\draw (0,0) circle [radius=1];

   \node[above right] at ( 0.9,0.4 )  { $v_1'$ };
  \node[ below right] at (0.9, -0.55) {$v_1$};
   \node[ above left] at (-0.95,0.4) {$v_2$};
  \node[below left] at (  -0.99,-1/2 ) {$v_2'$};
  
   \node[above] at ( 1.55, -1/2 ) {$E_{v_1, v_1'}$};

     \node[dot] at (0.87, -0.5)  {}; 
     
      \node[dot] at ( 0.89,0.45 )  {}; 
      
        \node[dot] at (-0.89,0.45) {}; 
      
       \node[dot] at (-0.865, -0.5)  {}; 
     
    \draw [-> ] (0.87, -0.5)  --  ( 0.89,0.45 )  ;

  \draw [->, line width=0.9] (0.87, -0.5)  --  ( 0.89,0.12 )  ;
  
   \node[left] at ( 0.89,0.09 )   { $\omega$ };

    \draw [-> ] ( -1.1,-1.7)  --  ( -0.865, -0.5 )  ;

     \draw [-> ] ( -1.1,-1.7)  --  ( -0.89,0.45 )  ;  
     
       \draw [-> ] ( -1.1,-1.7)  --  (  0.89,0.45)  ;  
       
        \draw [-> ] ( -1.1,-1.7)  --  (0.87, -0.5)  ;  
     
         \draw [- ] ( -0.89,0.45 )  --  (0.87, -0.5)  ;

 \node[above] at ( -0.8,0.8 ) {$\mathbb{S}_{v_1, v_2}$};

\end{scope}
\end{tikzpicture}
\end{center}\caption{Picture of the hyperplane $E_{v_1, v_1'}$.} \label{figCarl}
\end {figure}

In order to represent the gain term with the Carleman representation, we set $v_1' = v_1 + q \omega$, $q\in \R$ and $\omega \in \mathbb{S}^2$. Now, for any fixed $\omega$ and $v_1$ we can write $v_2' = v_2 - q \omega$, so $v_2 = v_2' + q \omega$. Then, with the change of variables $v_2 = v_2' + q \omega$, $\d v_2 = \d v_2' \d q$. Furthermore, by taking the polar coordinates of $v_1'$ centred at $v_1$ we get $$\d v_1' = q^2 \d q \d \omega.$$
So, by using the above formulations we have$$\d \omega \d v_2 = \d \omega \d v_2' \d q =\d v_2' \d q \d \omega = \frac{1}{q^2} \d v_2' \d v_1', $$
where $\d v_2'$ is the Lebesgue measure on the hyperplane $E_{v_1, v_1'}$ and $\d v_1'$ is the Lebesgue measure in $\R^3$. Also, we make the observation $q^2 = |q \omega |^2 = | v_1-v_1'|^2 $ using the formula for $v_1'$ above, see \cite[Section 2]{Wennberg1994}. Then the Carleman representation of the gain term is
 \begin{align*}
Q^+(f)(v_1) = 2 \int_{\R^3} \frac{f(v_1')}{|v_1-v_1'|^2}  \int_{E_{v_1,v_1'}} g_0( v_2') \ \mathcal{B}(v_1-v_2, \omega) \, \d v_2' \, \d v_1'.
\end{align*}
For an extended explanation of the change of variables see \cite[Appendix A]{Silvestre2016}.
Now in order to have the variables $v_1'$, $v_2'$ inside the collision kernel $\mathcal{B}$, we use the identity $v_1-v_2=2v_1-v_1'-v_2'$ which comes from the conservation of momentum and we take
 \begin{align*}
Q^+(f)(v_1) = 2 \int_{\R^3} \frac{f(v_1')}{|v_1-v_1'|^2}  \int_{E_{v_1,v_1'}} g_0( v_2') \ \mathcal{B}(2v_1-v_1'-v_2', \omega) \, \d v_2' \, \d v_1'.
\end{align*}
The factor $2$ is due to the fact that each plane is represented by two opposite directions $\omega$. Also, by the equality $v_1' = v_1 + q \omega$ we take that $\omega = \frac{v_1' - v_1}{q} = \frac{v_1' - v_1}{|v_1'-v_1|} \in \mathbb{S}^2$. Another reference for the Carleman representation is \cite[Section 1.4]{Villani2002}.

By the above analysis for the Carleman representation, the linear Boltzmann collision operator $Q$ can be written as
\begin{align}\label{LBO}
Q(f)(v_1)&:= \int_{\R^3} [\sigma(v_1,v_1')f(v_1') - \sigma(v_1',v_1)f(v_1)]\, \d v_1'
= K(f)(v_1) - \nu(v_1) f(v_1),
\end{align}
where the gain term is
\begin{align}
K(f)(v_1) &= \int_{\R^3} \sigma(v_1,v_1')f(v_1') \, \d v_1' \nonumber\\
& = \int_{\R^3} \left[ \frac{2}{|v_1 - v_1'|^2} \int_{E_{v_1,v_1'}} g_0(v_2') \mathcal{B} (2v_1-v_1'-v_2', \omega) \, \d v_2' \right] f(v_1')\, \d v_1' \label{eqn:defK}
\end{align}
and the loss term is
\begin{align}
\nu (v_1) &= \int_{\R^3} \sigma(v_1',v_1) d v_1' =
\int_{\R^3} \int_{\mathbb{S}^2} g_0(v_1') \mathcal{B} (v_1-v_1', \omega) \, \d \omega \, \d v_1'. \label{eqn:defnu}
\end{align}
\subsection{Estimate of the loss term of the collision operator}\label{S5.2}
In this subsection, we analyse the behaviour of the loss term $\nu$ of the Boltzmann collision operator $Q$ as the velocity magnitude $|v_1|\to \infty$. 
Our goal is to establish Proposition \ref{L5.2}, which characterizes the asymptotic decay of $\nu$ in the high-velocity regime. To proceed, we first state the following lemma which uses the notion of unimodal distribution. Unimodality refers to a distribution that has a maximum at $0$ and is decaying in any radial direction.

\begin{lemma}\label{2.4.} Let $f \in L^1(\R^3)$ be a radial and unimodal distribution. Suppose in addition that $f(v)(1+|v|)\in L^1(\R^3)$. Then, for any $\lambda < 0,$
\begin{align}
\int_{\R^3} f(v_1') |v_1 - v_1'|^{\lambda } \, \d v_1'
=
\int_{\R^3} f(v_1') |v_1 |^{ \lambda} \, \d v_1' + o(|v_1|^{\lambda }), \quad \text{as} \ |v_1| \to \infty.
\end{align}
\end{lemma}


\begin{proof}
We first note that $f\in L^\infty(\R^3\setminus B_1(0))$ 
where $B_1(0)$ is the unit ball as $f$ is a radial function which decays monotonically to $0$.

 Consider first the integral on the ball $B(v_1, \frac{|v_1|}{2})$, i.e.
\begin{align*}
\int_{ B(v_1, \frac{|v_1|}{2})  } |v_1|^{- \lambda} f(v_1') |v_1 - v_1'|^{\lambda }\, \d v_1'
&=
\int_{ B(v_1, \frac{|v_1|}{2}) }
f(v_1')
\left|\frac{v_1}{|v_1|} - \frac{v_1'}{|v_1|}\right|^{\lambda }\, \d v_1' \\
& \le
\|f\|_{L^{\infty}( B(v_1, \frac{|v_1|}{2}))} \int_{B(0, \frac{|v_1|}{2})}
\left|\frac{v_1'}{|v_1|}\right|^{\lambda }\, \d v_1'\\
&=
4\pi \|f\|_{L^{\infty}( B(v_1,\frac{|v_1|}{2}))}
\int_{0}^{\frac{|v_1|}{2}}
\frac{r^{\lambda + 2}}{|v_1|^{\lambda}} \, \d r\\
& =
\frac{4\pi}{ (\lambda + 3) 2^{\lambda + 3}} \|f\|_{L^{\infty}( B(v_1,\frac{|v_1|}{2}))} |v_1|^3,
\numberthis \label{2.7.}
\end{align*}
by H\"older's inequality and change of coordinates. This term is bounded for $|v_1|>2$ since $f \in L^{\infty}(\R^3\setminus B_1(0))$ and is a radial and  unimodal distribution.

Furthermore, consider the integral on the set $B(0, (1-\eta)|v_1|) $, for $\eta >0$ sufficiently small. Then
\begin{align*}
\int_{ B(0, (1-\eta)|v_1|) } |v_1|^{- \lambda} f(v_1') |v_1 - v_1'|^{\lambda }\, \d v_1'
& =  \int_{ B(0, (1-\eta)|v_1|) }
 f(v_1')   \frac{ 1 }{ | \e_1 - \frac{v_1' }{|v_1|}|^{|\lambda|}}  \, \d v_1' \\
 & \to  \int_{\R^3}  f(v_1')\, \d v_1'
\end{align*}
in the limit $|v_1|\to \infty$ which is finite since $f \in L^1(\R^3)$. Here we used the dominated convergence theorem, since the function $ f(v_1') \frac{ 1 }{ | \e_1 - \frac{v_1' }{|v_1|}|^{|\lambda|}}$ is dominated by $f(v_1') \frac{1}{\eta^{|\lambda|}}$, on the set $B(0, (1-\eta)|v_1|) )$.

Now, we consider the same integral but on the set $\R^3 \setminus B( 0, (1+\eta) |v_1|)$. That is
\begin{align*}
\int_{ \R^3 \setminus B( 0, (1+\eta) |v_1|) } |v_1|^{- \lambda} f(v_1') |v_1 - v_1'|^{\lambda } \, \d v_1'
&\le
\eta^{\lambda}
\int_{\R^3 \setminus B( 0, (1+\eta) |v_1|)} |v_1|^{- \lambda} f(v_1') |v_1|^{\lambda }\, \d v_1'\\
& =
\eta^{\lambda}
\int_{\R^3 \setminus B( 0, (1+\eta) |v_1|)}  f(v_1')\, \d v_1',
\end{align*}
which is finite since $f\in L^1(\R^3)$. Here we used the inverse triangle inequality and the fact that $v_1' \in  \R^3 \setminus B( 0, (1+\eta) |v_1|)$ and that $\lambda <0$.
Finally, we consider the integral on the set $\Omega := B(0, (1+\eta)|v_1|) \setminus  B(0, (1-\eta)|v_1|)$. Then
\begin{align*}
\int_{ \Omega} |v_1|^{- \lambda} f(v_1') |v_1 - v_1'|^{\lambda }\, \d v_1'
& =
\int_{ \Omega \cap B(v_1, \frac{|v_1|}{2} )} |v_1|^{- \lambda} f(v_1') |v_1 - v_1'|^{\lambda } \, \d v_1'  \\
&+
\int_{ \Omega \setminus B(v_1, \frac{|v_1|}{2})  } |v_1|^{- \lambda} f(v_1') |v_1 - v_1'|^{\lambda }\, \d v_1',
\end{align*}
where
\begin{align*}
\int_{ \Omega \cap B(v_1, \frac{|v_1|}{2} )} |v_1|^{- \lambda} f(v_1') |v_1 - v_1'|^{\lambda } \, \d v_1'
& \le
\int_{ B(v_1, \frac{|v_1|}{2} )} |v_1|^{- \lambda} f(v_1') |v_1 - v_1'|^{\lambda }\, \d v_1' \\
&\le
\frac{4\pi}{ (\lambda + 3) 2^{\lambda + 3}} \|f\|_{L^{\infty}( B(v_1,\frac{|v_1|}{2}))} |v_1|^3,
\end{align*}
by the relation $\eqref{2.7.}$.
Also
\begin{align*}
\int_{ \Omega \setminus B(v_1, \frac{|v_1|}{2})  } |v_1|^{- \lambda} f(v_1') |v_1 - v_1'|^{\lambda }\, \d v_1'
&=
 \int_{ \Omega \setminus B(v_1, \frac{|v_1|}{2})  }
 f(v_1')   \frac{ 1 }{ | \e_1 - \frac{v_1' }{|v_1|}|^{|\lambda|}}  \, \d v_1'\\
 & \le
 2 \int_{ \Omega \setminus B(v_1, \frac{|v_1|}{2})  }
 f(v_1') \, \d v_1',
\end{align*}
which is finite since $f\in L^1(\R^3)$. We used the fact that the term $ \frac{ 1 }{ | \e_1 - \frac{v_1' }{|v_1|}|^{|\lambda|}}$ is bounded by $2$ on the set $\Omega \setminus B(v_1, \frac{|v_1|}{2})$.
This completes the proof of the lemma.
\end{proof}


\begin{proposition}\label{L5.2}
The function  $\nu$ is bounded and is behaving as
\begin{align}\label{nu}
\nu (v_1)
 & =
 \tilde{C}
   \bigg[ |v_1|^{\frac{n-5}{n-1}} 
   +
   o(|v_1|^{\frac{n-5}{n-1}})
   \bigg], 
\end{align}
as $|v_1|\to \infty$, $q\in (4, \frac{4n}{n-1})$ and $n\in (3,5]$.
\end{proposition}
\begin{proof}
The formula for the loss term of the collision operator is given by
\begin{align*}
\nu (v_1) &= \int_{\mathbb{S}^2} \int_{\R^3}g_0 (v_1') \mathcal{B} (v_1-v_1', \omega) \, \d v_1'  \, \d \omega,
\end{align*}
then boundedness for bounded $v_1$
follows from the boundedness of $\mathcal{B}$ implied by Lemma \ref{lem:Bbounded} and Corollary \ref{CorCOB}. The estimates follow from 
\begin{align*}
\nu (v_1)&=
    \frac{2^{\frac{2}{n-1}}}{ a^2}
   \int_{\mathbb{S}^2} \int_{\R^3}
g_0(v_1')
\bigg[ |v_1-v_1'|^{\frac{n-5}{n-1}} + \mathcal{O}(|v_1-v_1'|^{\frac{n-7}{n-1}})
\bigg] \Theta
(\sin \Theta)^{-1}
\, \d v_1'\, \d \omega \\
 &=
   \frac{2^{\frac{2}{n-1}}}{ a^2}
   \int_{\mathbb{S}^1}
   \int_{\R^3}
g_0(v_1')
\bigg[ |v_1-v_1'|^{\frac{n-5}{n-1}} + \mathcal{O}(|v_1-v_1'|^{\frac{n-7}{n-1}})
\bigg]
  \int_{0}^{\frac{\pi}{2}} \Theta  \, \d\Theta
\, \d v_1' \, \d\psi \\
 & =
  \frac{2^{\frac{2}{n-1}}}{ a^2}  \pi \frac{\pi}{2}^2
   \int_{\R^3}
g_0(v_1')
\bigg[ |v_1-v_1'|^{\frac{n-5}{n-1}} + \mathcal{O}(|v_1-v_1'|^{\frac{n-7}{n-1}})
\bigg]\, \d v_1'  \\
 & =
 \underbrace{  \frac{2^{\frac{2}{n-1}}}{ a^2}
   \pi \frac{\pi}{2}^2}_{ =: \tilde{C}}
   \bigg[  \left[ |v_1|^{\frac{n-5}{n-1}}
+ \mathcal{O}(|v_1|^{\frac{n-7}{n-1}})
\right]
\underbrace{ \int_{\R^3}
g_0(v_1')
\, \d v_1' }_{=1}
 +  o(|v_1|^{\frac{n-5}{n-1}}) \bigg].
\end{align*}
Here we used the definition of the collision kernel \eqref{2.4} 
and Lemma \ref{2.4.} for $f=g_0$, and for $\lambda = \frac{n-5}{n-1}$. 
\end{proof}

\subsection{Estimate of the gain term of the collision operator}\label{S5.1}
The goal of this subsection is to state and prove a series of lemmas which are essential in order to find the behaviour of the gain term $K(f)$ of the Boltzmann collision operator $Q$ for large velocities, i.e. $|v_1|\to \infty$. 
This behaviour is presented in Proposition \ref{L5.1}. To proceed, we start with the following lemma. 

\begin{lemma} \label{lem:convolest}
There exists $C$ such that the function   satisfies
 \begin{align} \nonumber 
     \sigma(v_1,v_1')=& \frac{2}{|v_1 - v_1'|^2} \int_{E_{v_1,v_1'}} g_0(v_2') \mathcal{B} (2v_1-v_1'-v_2', \omega)\, \d v_2' \\
     \leq& \frac{C}{|v_1 - v_1'|^2} \min \bigl( 1,  \left|v_1 \cdot\frac{v_1-v_1'}{|v_1-v_1'|} \right|^{-q+2} \bigr).  \label{eqn:convolest}
 \end{align}   
\end{lemma}
\begin{proof}
   From Lemma \ref{lem:Bbounded} and Proposition \ref{P3.2} we know that   $\mathcal{B}$ is uniformly bounded. 
   Due to the specific form of $g_0$ the maximal integral over any plane is bounded by the integral over a plane through the origin
$$\int_{E_{v_1,v_1'}} g_0(v_2') \, \d v_2' \leq \int_{\R^2\times \{0 \}} g_0(v) \, \d v < \infty, $$
   which implies the constant bound.

To get better estimates for large $|v_1|$, we analyse the Carleman plane $E_{v_1,v_1'}$ more carefully. The point closest to the origin on  $E_{v_1,v_1'}$ is a multiple of the normalised normal vector
$\frac{1}{|v_1-v_1'|} (v_1-v_1')$ and satisfies the condition in \eqref{eqn:carlemandef}. Hence it is given by
\begin{align}
    \label{eqn:vmin}
    v_{min} (v_1,v_1')= \left(v_1 \cdot\frac{v_1-v_1'}{|v_1-v_1'|} \right) \frac{v_1-v_1'}{|v_1-v_1'|}. 
\end{align}
If $|v_{min}|> \bar{R}$, we improve the estimate for the integral. Let 
$e_1, e_2$ be two orthonormal vectors in $\R^3$ that satisfy $e_1, e_2 \perp (v_1-v_1').$ Then, we parametrise the plane $E_{v_1,v_1'}$  in polar coordinates $v_2':= v_{min} + r v_s$, $r \in (0, \infty)$ and $v_s := \cos s \ e_1 + \sin s \ e_2,$ $s \in (0, 2\pi)$.
\begin{align*}
   \int_{E_{v_1,v_1'}} g_0(v_2') \, \d v_2'
   =& \int_0^\infty \int_0^{2\pi} g_0(v_{min} + rv_s) \, \d s \, r \d r =
   \int_0^\infty \int_0^{2\pi} C_2 \left(\sqrt{|v_{min}|^2 + r^2 } \right)^{-q} \, \d s \, r \d r\\
   =& 2 \pi C_2 \int_0^\infty  \left(\sqrt{|v_{min}|^2 + r^2 } \right)^{-q}    r \,   \d r\\
=& 2 \pi C_2  \left[ \frac{-1}{(q-2 )} 
\left(\sqrt{|v_{min}|^2 + r^2 } \right)^{-q+2}
\right]_0^\infty =\frac{2\pi C_2}{ (q-2 )}  
 |v_{min}|^{-q+2}, 
 \end{align*}
as required.
\end{proof}

\begin{lemma} \label{lem:sharperconvolution}
Let $f \in L^\infty(\R^3)$, then for $|v_1|>1$ we restrict integration to the ball $B_L(v_1)$ of radius $L$ around $v_1$ and define
\begin{align}
    H(v_1, L,f) = &\int_{\R^3} \sigma(v_1,v_1')
\chi_{|v_1-v_1'|\leq L} f(v_1') \, \d v_1' 
    \label{eqn:H}.
\end{align}    
Then there exists a uniform constant $C$ such that
\begin{align*}
    |H(v_1, L,f)| \leq C L \frac{1}{|v_1|} \|f \|_{L^\infty(B_L(v_1))}.    
\end{align*}
\end{lemma}

\begin{proof}
    We use the estimates in Lemma \ref{lem:convolest}. We consider spheres in $v_1'$ around $v_1$ with radius
$|v_1-v_1'|=  \ell$ for $0 \leq \ell \leq L$. Taking $v_1$ as pointing to the north pole, we use spherical polar coordinates, $(\phi, \theta)
\in [0,2\pi) \times (0, \pi)$.   For  a given sphere the condition for $v_{min}$ as in \eqref{eqn:vmin} 
\[ \left(v_1 \cdot\frac{v_1-v_1'}{|v_1-v_1'|} \right) =\cos(\theta) |v_1| \mbox{ with } \theta \in (0,\pi) \]
defines a circle of radius $\ell \sin \theta$ such that the right hand side of \eqref{eqn:convolest} is constant and, as $|v_{min}|
= |\cos(\theta)|  |\ell| |v_1|$, is given by 
\[ I(\ell, \theta)=  \frac{C}{\ell^2} \begin{cases} 1 & |\cos(\theta)|\leq \frac{\bar{R}}{|v_1|}  \\ \frac{2 \pi C_2}{q-2}(|\cos \theta| |v_1|)^{-q+2} & \frac{\bar{R}}{|v_1|} \leq |\cos(\theta)|\leq 1.
\end{cases}\]
Taking the volume integral
\begin{align*}
    |H(v_1,L)| \leq & \int_{B_L(v_1)}
\sigma(v_1,v_1')
 \, \d v_1' \|f \|_{L^\infty(B_L(v_1))},
\end{align*}
we get by letting $u =\cos (\theta)$
\begin{align*}
   \int_{B_L(v_1)}
\sigma(v_1,v_1')
 \, \d v_1' &= \int_0^L \int_0^{2 \pi} \int_{0}^{\pi} I(\ell, \theta) \ell^2 \sin(\theta) \, \d \theta \, \d \phi \, \d \ell\\
& = C L  4 \pi  \left(\int_{0}^ {\frac{\bar{R}}{|v_1|}} 1 \, \d u + \int_{\frac{\bar{R}}{|v_1|}}^1 \frac{2\pi C_2}{q-2}(u |v_1|)^{-q+2} \, \d u  \right) \\
&= C L  4 \pi \left( \frac{\bar{R}}{|v_1|} +
\frac{2 \pi C_2}{q-2}|v_1|^{-q+2} \left[ \frac{u^{-q+3}}{3-q}\right]_{\frac{\bar{R}}{|v_1|}}^1
\right) \\
&= C L  4 \pi \left( \frac{\bar{R}}{|v_1|} +
\frac{2 \pi C_2}{(q-2) (q-3)} \left[ \frac{\bar{R}^{-q+3}}{|v_1|} - |v_1|^{-q+2} \right] \right),  
\end{align*}
which yields the bound.    
\end{proof}

\begin{lemma}\label{integrand:sigma} 
 Let  $K,L >0$. Then  for $|v_1|>K$
the integrand of the gain term can be represented 
as 
 \begin{align} \nonumber 
     \sigma(v_1,v_1')=& \frac{2}{|v_1 - v_1'|^2} \int_{E_{v_1,v_1'}} g_0(v_2') \mathcal{B} (2v_1-v_1'-v_2', \omega)\, \d v_2'
   \\  = & \frac{2^{\frac{2}{n-1}}}{ a^2} 4\pi C_2  |v_1|^{-q +\frac{n-5}{n-1}} \Bigl[ 
 \int_{0}^{\infty}
 \rho (1+\rho^2)^{\frac{n-5}{2(n-1)} -\frac{q}{2}} \frac{\arctan \rho}{ \sin (\arctan \rho)} \d \rho + R(v_1,v_1') \Bigr], \label{eq:sigma}
 \end{align}   
  where $R(v_1,v_1')$ is uniformly bounded for  $|v_1-v_1'|>L$, continuous and
 $R(v_1,v_1') \to 0 $ as $|v_1| \to \infty$ pointwise in $v_1'$.
\end{lemma}



\begin{proof}
We start with the integrand of the gain term 
\begin{align*}
\sigma(v_1, v_1') = \frac{2}{|v_1 - v_1'|^2} \int_{E_{v_1,v_1'}} g_0(v_2') \mathcal{B} (2v_1-v_1'-v_2', \omega)\, \d v_2',
\end{align*}
where $E_{v_1,v_1'} = \{ v_2' \in \R^3 \ : \ (v_2'-v_1)\cdot (v_1-v_1') = 0 \}$. 
We observe that if $|v_1 - v_1'| \to \infty$ then $|2v_1 - v_1' - v_2'| \to \infty$. That is because we can write 
\begin{align*}
    |2v_1 - v_1' - v_2'|^2 &= |v_1 - v_1'|^2 + 2(v_1 - v_1')\cdot(v_1 - v_2') + |v_1 - v_2'|^2\\
    & = |v_1 - v_1'|^2 + |v_1 - v_2'|^2,
\end{align*}
since $v_2' \in E_{v_1,v_1'}$. So, by the above observation, we are able to utilise Proposition \ref{P3.2}. 
Let us now estimate the integrand of $K(f)(v_1)$, that is 
\begin{align*}
\int_{E_{v_1,v_1'}} g_0(v_2') \mathcal{B}& (2v_1-v_1'-v_2', \omega)\, \d v_2' \\
& =
\frac{2^{\frac{2}{n-1}}}{ a^2} \bigg[
\int_{E_{v_1,v_1'}} g_0(v_2')
|2v_1-v_1'-v_2'|^{\frac{n-5}{n-1}} \Theta
(\sin \Theta)^{-1}\, \d v_2' \\
& \quad+
 \int_{E_{v_1,v_1'}} g_0(v_2')
\mathcal{O}(|2v_1-v_1'-v_2'|^{\frac{n-7}{n-1}}) \Theta
(\sin \Theta)^{-1}\, \d v_2' \bigg], \quad \text{as} \ |v_1 - v_1'|\to \infty.
\end{align*}
We know that
\begin{align*}
\frac{v_1-v_1'+v_1-v_2'}{|2v_1-v_1'-v_2'|} \cdot  \frac{v_1-v_1'}{|v_1-v_1'|} = \cos \Theta
& \Leftrightarrow \frac{|v_1-v_1'|^2 + (v_1-v_2')\cdot (v_1-v_1')}{|2v_1-v_1'-v_2'| |v_1-v_1'|} = \cos \Theta \\
& \Leftrightarrow |v_1-v_1'| = \cos \Theta |2v_1-v_1'-v_2'|\\
& \Leftrightarrow 
|v_1-v_1'|^2 = \cos^2 \Theta (|v_1-v_1'|^2 + |v_1 - v_2'|^2)\\
& \Leftrightarrow 
|v_1-v_2'|^2 = \tan^2 \Theta |v_1-v_1'|^2,
\end{align*}
where we used the fact that $ (v_1-v_2')\cdot (v_1-v_1')=0$, since $v_2' \in E_{v_1,v_1'}$. Furthermore, since the angle $\Theta$ is bounded above by $\frac{\pi}{2}$ and the function $\tan x$ is strictly increasing in $(0,\frac{\pi}{2})$, we conclude that $v_2'$ lies within a ball centred at $v_1$ with arbitrary large radius, or equivalently, the ball can be considered to have infinite radius.
Hence, we choose $e_1, e_2$ two orthonormal vectors in $\R^3$ that form a basis of the plane $E_{v_1, v_1'}$. This implies that $e_1, e_2 \perp (v_1-v_1').$ Then, we parametrise the plane in polar coordinates $v_2':= v_1 + r v_s$, $r \in (0, \infty)$ and $v_s := \cos s \ e_1 + \sin s \ e_2,$ $s \in (0, 2\pi)$ and the integral becomes
\begin{align*}
&\int_{E_{v_1,v_1'}} g_0(v_2') \mathcal{B} (2v_1-v_1'-v_2', \omega)\, \d v_2'
\\
& =
\frac{2^{\frac{2}{n-1}}}{ a^2}\bigg[
 \int_{0}^{\infty} \int_{0}^{2\pi}g_0(v_1 + r v_s)
|v_1-v_1'-rv_s|^{\frac{n-5}{n-1}} \Theta(r)(\sin \Theta(r))^{-1} \,
 \d s \,r \, \d r \\
 & \quad +
 \int_{0}^{\infty} \int_{0}^{2\pi}g_0(v_1 + r v_s)
\mathcal{O}(|v_1-v_1'-rv_s|^{\frac{n-7}{n-1}})\Theta(r)(\sin \Theta(r))^{-1} \,
 \d s \,r \, \d r \bigg]
 \\
& =
\frac{2^{\frac{2}{n-1}}}{ a^2}\bigg[
 \int_{0}^{\infty} \int_{0}^{2\pi}g_0(v_1 + r v_s)
(|v_1-v_1'|^2 +r^2 )^{\frac{n-5}{2(n-1)}} \Theta(r)(\sin \Theta(r))^{-1} \,
 \d s \,r \, \d r \\
 & \quad +
 \int_{0}^{\infty} \int_{0}^{2\pi}g_0(v_1 + r v_s)
\mathcal{O}((|v_1-v_1'|^2 +r^2 )^{\frac{n-7}{2(n-1)}})\Theta(r)(\sin \Theta(r))^{-1} \,
 \d s \,r \, \d r \bigg].
\end{align*}
Now, we use the change of variables 
$r \mapsto \rho := \frac{r}{|v_1 - v_1'| } $ and we obtain 
\begin{align*}
\frac{2^{\frac{2}{n-1}}}{ a^2}|v_1-v_1'|^2
\bigg[ \int_{0}^{\infty} \big[ |v_1 &- v_1'|^{\frac{n-5}{n-1}}
 \rho (1+\rho^2)^{\frac{n-5}{2(n-1)}} 
 + \mathcal{O} (|v_1 - v_1'|^{\frac{n-7}{n-1}})
 \rho (1+\rho^2)^{\frac{n-7}{2(n-1)}} \big] \\&
 \times  \frac{\Theta(\rho |v_1 - v_1'|)}{\sin \Theta(\rho |v_1 - v_1'|) } \int_{0}^{2\pi}g_0(v_1 + |v_1 - v_1'| \rho v_s) \,
 \d s \, \d \rho 
 \bigg]
\end{align*}
and we observe that 
$$ |v_1 - v_1'|^2 \frac{\sin^2 \Theta}{\cos ^2 \Theta} = |v_1 - v_2'|^2 = r^2 = \rho^2 |v_1 -v_1'|^2 $$
which implies that $$ \tan \Theta = \rho \Leftrightarrow \Theta = \arctan \rho.$$
Furthermore, we notice that, for large velocities $|v_1|$, the behaviour of the distribution function $g_0$ is $g_0(v_1) = C_2 |v_1|^{-q}$, thus
\begin{align*}
    \int_{0}^{2\pi} g_0(v_1 + |v_1-v_1'|\rho v_s ) \, \d s 
    &=
    C_2 |v_1|^{-q} 
    \int_{0}^{2\pi} 
    \left| \frac{v_1}{|v_1|} + \frac{|v_1 - v_1'|}{|v_1|}\rho v_s \right|^{-q} \, \d s,
\end{align*}
when $|v_1 + |v_1-v_1'| \rho v_s|\ge \bar{R}$
where 
\begin{align*}
\left| \frac{v_1}{|v_1|} + \frac{|v_1 - v_1'|}{|v_1|}\rho v_s \right|^2 
&= 1 + \frac{|v_1 - v_1'|^2}{|v_1|^2}\rho^2 
+
2 \rho \frac{|v_1-v_1'|}{|v_1|} \frac{v_1}{|v_1|} \cdot v_s \\
& = 1 + \rho^2 + \mathcal{O}(\frac{\rho}{|v_1|}), \quad \text{as} \ |v_1| \to \infty,
\end{align*}
for fixed $v_1' \in \R^3$. Here we used the fact that $v_s \cdot (v_1 - v_1') = 0$ which implies that $v_1 \cdot v_s = v_1' \cdot v_s$. So $v_1 \cdot v_s$ is constant in $v_1$. 
 Therefore, the above computations implies 
\begin{align*}
\int_{0}^{2\pi} g_0(v_1 + |v_1-v_1'|\rho v_s ) \, \d s 
&= 
2\pi C_2 |v_1|^{-q} \left( (1+\rho^2)^{-\frac{q}{2}} + \mathcal{O}\big(\frac{\rho}{|v_1|}\big)\right).
\end{align*} 
Hence, for fixed $v_1'$ and for $|v_1|\to \infty$ the whole integral becomes
\begin{align*}
&\frac{2}{|v_1 - v_1'|^2}\int_{E_{v_1,v_1'}} g_0(v_2') \mathcal{B} (2v_1-v_1'-v_2', \omega)\, \d v_2' \\
 &
 =  
 \frac{2^{\frac{2}{n-1}}}{ a^2} 4\pi C_2  |v_1|^{-q}  \bigg[
 |v_1 |^{\frac{n-5}{n-1}}
 \int_{0}^{\infty}
 \rho (1+\rho^2)^{\frac{n-5}{2(n-1)} -\frac{q}{2}} \frac{\arctan \rho}{ \sin (\arctan \rho)} \d \rho 
  \\ & \qquad \quad \quad \quad +
  \mathcal{O}(|v_1|^{\frac{n-7}{n-1}})
 \int_{0}^{\infty}
 \rho (1+\rho^2)^{\frac{n-7}{2(n-1)} -\frac{q}{2}} \frac{\arctan \rho}{ \sin (\arctan \rho)} \d \rho \\
 &\qquad \quad \quad \quad + |v_1|^{\frac{n-5}{n-1}}\mathcal{O}(\frac{1}{|v_1|}) \int_{0}^{\infty}
 \rho^2 (1+\rho^2)^{\frac{n-5}{2(n-1)} - \frac{q}{2} -1} \frac{\arctan \rho}{ \sin (\arctan \rho)} \d \rho \\ 
 & \qquad \quad \quad \quad + \mathcal{O}(|v_1 |^{\frac{n-7}{n-1}})\mathcal{O}(\frac{1}{|v_1|}) \int_{0}^{\infty}
 \rho^2 (1+\rho^2)^{\frac{n-7}{2(n-1)} -\frac{q}{2}-1}  \frac{\arctan \rho}{ \sin (\arctan \rho)} \d \rho 
  \bigg] \\
  &=: 
   \frac{2^{\frac{2}{n-1}}}{ a^2} 4\pi C_2  |v_1|^{-q + \frac{n-5}{n-1}}
 \bigg[ \int_{0}^{\infty}
 \rho (1+\rho^2)^{\frac{n-5}{2(n-1)} -\frac{q}{2}} \frac{\arctan \rho}{ \sin (\arctan \rho)} \d \rho + R(v_1, v_1')\bigg],
\end{align*}
where the remainder term, denoted by $R(v_1, v_1')$, is continuous in $(v_1, v_1')$  away from the diagonal $v=v'$ and $R(v_1, v_1') \to 0$, as $|v_1| \to \infty$, pointwise in $v_1'$.
 Uniform boundedness of $R$ for $|v_1-v_1'|>L$ follows  as the $v_1'$ dependence of all the remainder terms can be bounded by a negative power of $|v_1-v_1'|$. This completes the proof of the lemma.    
\end{proof}

\subsection{Existence of Equilibrium}
  \begin{proposition}\label{prop:equili}
    There exists a continuous, radial positive and unimodal equilibrium distribution $F$ of \eqref{LBE}.  It is the unique normalised positive equilibrium. 
\end{proposition}
\begin{proof}
    We will apply the Krein-Rutman theorem to the map
\begin{align}
    \label{eqn:defS}
    S(f)(v) = \frac{1}{\nu(v)} \int_{\R^3} \sigma(v,v') f(v')\,  \d v'
\end{align}    
on the Banach space 
\begin{align}
    \label{eqn:defX} X := \{ f \in C^0 ( \R^3) \mid \|f\|_X= \sup_{v \in \R^3} (1+|v|)^{q-\delta} |f(v)| < \infty, 
    \lim_{|v| \to \infty }(1+|v|)^{q-\delta} |f(v)|=0 \},
\end{align}
where we choose $0<\delta<1$ fixed.   

We first show that $S:X \to X$ is bounded. The function $S(f)$ is continuous as $\sigma$ is continuous away from the diagonal $v=v'$ by the standard arguments see e.g. 
Alt \cite[Sec. 10.16]{Alt}. We estimate $(1+|.|)^{q-\delta} |f(.)|$, for $|v|\leq 1$ and we use that $\frac{1}{\nu(v)}$ and  $(1+|v|)^{q-\delta}$ are bounded. Then it is enough to use the first part of Lemma \ref{lem:convolest} to estimate
\begin{align*}
    |S(f)(v)|& \leq \int_{\R^3} \frac{C}{|v-v'|^2} |f(v')| \, \d v' \\&=\int_{|v-v'|\leq L } \frac{C}{|v-v'|^2} |f(v')| \, \d v' +\int_{|v-v'|\geq L } \frac{C}{|v-v'|^2} |f(v') |\, \d v'   \\
    & \leq C \|f\|_{L^\infty} \int_{|v-v'|\leq L } \frac{1}{|v-v'|^2}  \, \d v' +
    \frac{C}{L^2} \int_{|v-v'|\geq L }  |f(v') |\, \d v'\\
    & \leq CL \|f\|_X + \frac{C}{L^2} \int_{|v-v'|\geq L } \|f\|_X \frac{1}{(1+|v'|)^{q-\delta}} \, \d v' \leq C\left( L + \frac{1}{L^2}\right) \|f\|_X. 
\end{align*}
For $|v|\geq 1$ we combine Lemma \ref{lem:sharperconvolution} for $v'$ near $v$ and Lemma \ref{integrand:sigma} for $v'$ away from $v$ for some distance $0<L<1$. We first estimate
\begin{align*}
    |S(f)(v)|& \leq \frac{1}{\nu(v) } \int_{\R^3} \frac{C}{|v-v'|^2} |f(v')| \, \d v' \\&=\frac{1}{\nu(v) } \left(\int_{|v-v'|\leq L } \frac{C}{|v-v'|^2} |f(v')| \, \d v' +\int_{|v-v'|\geq L } \frac{C}{|v-v'|^2} |f(v') |\, \d v' \right) \\
    & \leq C L \frac{1}{\nu(v) |v| }\|f\|_{L^\infty(B_L(v))}  +
    \frac{C}{\nu(v)} |v|^{-q+\frac{n-5}{n-1}} \int_{|v-v'|\geq L }  |f(v') |\, \d v'.
\end{align*}
Using the decay of $\nu$ as in Proposition \ref{L5.2} and that $ \|f\|_{L^\infty(B_L(v))} \leq (1+|v|-L)^{-q+\delta} \|f\|_X$ as well as $\|f\|_{L^1} \leq C \|f\|_X$, we can conclude
\begin{align*}
    (1+|v|)^{q-\delta} |S(f)(v)| \leq
    C L \frac{1}{\nu(v) |v| }\|f\|_X \frac{(1+|v|)^{q-\delta}}{(1+|v|-L)^{q-\delta}} + 
    C (1+|v|)^{
 -\delta} \|f\|_X \leq C(L) \|f\|_X,
\end{align*}
which is bounded and converges to $0$ for $|v| \to \infty$  as long as $\frac{1}{\nu(v)|v|} \to 0 $ for $|v| \to \infty$, which holds with Proposition \ref{L5.2} for $n \in (3,5]$. Hence $S$ is a bounded map on $X$. 

To show compactness of $S$ we use the Arzela-Ascoli Theorem for the weighted functions $(1+|.|)^{q-\delta} f(.)$ on one-point compactification of $\R^3$ with boundary data $0$ at $\infty$. The previous estimates ensure that the functions in the image under $S$ of the unit ball in $X$  are uniformly bounded. It remains to check 
equi-continuity. For the integral parts with $|v-v'| \leq L$, this follows as in \cite[Sec. 10.16]{Alt} for Schur integral operators. Whereas for $|v-v'| \geq L$, we use the continuity of $\sigma$ with respect to both variables and that Lemma \ref{integrand:sigma} ensures uniform continuity in $v$, uniformly in $v'$.

As a cone $C$ for the Krein-Rutman theorem we first use the positive functions in $X$, which satisfies $X= C+ (- C)$. This yields uniqueness of a positive normalised eigenvector $F$. To obtain additional qualitative properties, we observe that $S$ maps radial functions to radial functions such that we can restrict $X$ to the set $X_{rad}$ of radial functions and recover again a unique positive normalised eigenvector, which must coincide with $F$. Within in radial functions, we can use the cone of unimodal (monotonically decaying), positive functions. This cone $C_U$ satisfies that  $C_U + (-C_U)$ is dense in $X_{rad}$, as any element in       $X_{rad}$ can be approximated by a $C^1$ function with compact support and every $C^1$ function with compact support is the sum of a decaying and an increasing function, which can be chosen to be positive and negative respectively. Hence the normalised eigenvector  $F$ is  positive, radial and unimodal.

Then $F$ satisfies for some $\lambda>0$
\begin{align*}
    \lambda \nu(v) F(v)= \int_{\R^3} \sigma(v,v')F(v') \, \d v',
\end{align*}
integrating with respect to $v$ and using \eqref{eqn:defnu} yields 
\begin{align*}
\lambda\int_{\R^3} \nu(v) F(v) \d v= \int_{\R^3} \nu(v') F(v') \, \d v',
\end{align*}
i.e.  $\lambda=1$ as $\nu$ and $F$ are non-negative and non-zero. Hence $F$ is indeed the desired equilibrium.
\end{proof}

By construction in the proof, we already know that $F$ decays
at least as $|v|^{-q+\delta}$ for any fixed $\delta >0$.  With this we are now able to determine the exact decay of $K(F)(v_1)$ for $|v_1| \to \infty$.

\begin{proposition}
\label{L5.1} Let $F \in L^1(\R^3)$ be a radial, positive  and unimodal distribution, normalised such that $\|F\|_{L^1}=1$. 
Then the operator $K(F)$ satisfies the asymptotic behaviour
\begin{align}\label{10.2}
K(F)(v_1)
& =   C_{\alpha}  \bigg[ |v_1|^{-q+\frac{n-5}{n-1}} 
 + o(|v_1|^{-q +\frac{n-5}{n-1}}) \bigg],
\end{align}
as $|v_1|\to \infty$, where $q \in (4,\frac{4n}{n-1})$, $n\in (3,5]$ and $C_{\alpha} >0$ constant.  \end{proposition}

\begin{proof}
    The Carleman representation of the gain term of the linear Boltzmann operator $Q$ is given by
\begin{align*}
K(F)(v_1) &= \int_{\R^3} \left[ \frac{2}{|v_1 - v_1'|^2} \int_{E_{v_1,v_1'}} g_0(v_2') \mathcal{B} (2v_1-v_1'-v_2', \omega)\, \d v_2' \right] F(v_1')\, \d v_1',
\end{align*}
where $E_{v_1,v_1'} = \{ v_2' \in \R^3 \ : \ (v_2'-v_1)\cdot (v_1-v_1') = 0 \}$. 

To compute the behaviour of the operator $K(F)(v_1)$, for $|v_1|\to \infty$, we use Lemma \ref{integrand:sigma} for the main part where $|v_1-v_1'|>L$ and Lemma \ref{lem:sharperconvolution} for the remainder. That is, we plug in the given expression \eqref{eq:sigma} into the integral , so we can write 
\begin{align*}
   & K(F)(v_1) = \int_{\R^3} \sigma(v_1, v_1') F(v_1') \d v_1'\\
    & =\int_{|v_1'-v_1|>L} \frac{2^{\frac{2}{n-1}}}{ a^2} 2\pi C_2  |v_1|^{-q +\frac{n-5}{n-1}}  \Bigl[ 
 \int_{0}^{\infty}
 \rho (1+\rho^2)^{\frac{n-5}{2(n-1)} -\frac{q}{2}} \frac{\arctan \rho}{ \sin (\arctan \rho)} \d \rho + R(v_1,v_1') \Bigr]F(v_1') \d v_1'\\& \quad +H(v_1,L,F) .
\end{align*}
Now, we define the constant 
\begin{equation*}
A:=  \frac{2^{\frac{2}{n-1}}}{ a^2} 2\pi C_2 \int_{0}^{\infty}
 \rho (1+\rho^2)^{\frac{n-5}{2(n-1)} -\frac{q}{2}} \frac{\arctan \rho}{ \sin (\arctan \rho)} \d \rho. 
\end{equation*}
Hence, $K(F)$ can be written as
\begin{align*}
K(F)(v_1) =& A |v_1|^{-q + \frac{n-5}{n-1}} \underbrace{\int_{\R^3} F(v_1') \d v_1'}_{=1} + \frac{2^{\frac{2}{n-1}}}{ a^2} 2\pi C_2 |v_1|^{-q - \frac{n-5}{n-1}} \int_{|v_1'-v_1|>L} R(v_1, v_1') F(v_1') \d v_1' \\&
+ o(|v_1|^{-q +\frac{n-5}{n-1}}) +H(v_1,L, F).
\end{align*}
Furthermore, we know that $R(v_1, v_1') \to 0$, as $|v_1|\to \infty$ pointwise in $v_1'$ and that $R(v_1, v_1')$ is uniformly bounded in $(v_1, v_1')$. We also have that $F\in L^1(\R^3)$, then $$|R(v_1,v_1') F(v_1')|\le C |F(v_1')|.$$ Therefore, we can apply Dominated Convergence Theorem and get
\[ \int_{\R^3} R(v_1, v_1') F(v_1') \d v_1' \to 0, \quad \text{as} \ |v_1|\to \infty. \]
Additionally, $|H(v_1,L,F)| \leq CL |v_1|^{-q+\delta-1}$, it is indeed possible to choose $\delta$
such that 
\begin{align}
    \label{deltaconst} \delta -1 < \frac{n-5}{n-1}
\end{align}
for $n \in (3,5]$ such that the $H$ term is of higher order.

Hence, we have shown that the behaviour of the operator $K(F)$ is 
\begin{align*}
    K(F)(v_1) = C_{\alpha} |v_1|^{-q + \frac{n-5}{n-1} } + o(|v_1|^{-q + \frac{n-5}{n-1} }), \quad \text{as} \ |v_1|\to \infty,
\end{align*}
where $C_{\alpha}:= A\int_{\R^3} F(v_1') \d v_1'. $
\end{proof}

\subsection{Assumptions and proof of Theorem \ref{thm2}}
In the remaining subsection, we are using \cite[Theorem 3.2]{Mellet2011} which we state it here for the convenience of the reader. Before stating the theorem, we list here the key assumptions.

\textbf{Assumption (A1)} The cross-section $\sigma$ in \eqref{LBO} is locally integrable on $\mathds{R}^3 \times \mathds{R}^3$, non negative and the collision frequency $\nu$ is locally integrable on $\mathds{R}^3$ and satisfies 
\begin{align*}
\nu (-v) = \nu (v) >0 \quad \mathrm{for \ all\ } v \in \mathds{R}^3.
\end{align*}

\textbf{Assumption (A2)} There exists a function $0 \le F \in L^1(\nu)$ such that $|v|^2 \nu (v)^{-1} F$ is locally integrable and 
\begin{align*}
\nu (v) F(v) = K(F)(v) = \int_{\mathds{R}^3} \sigma(v,v')F(v') \ \d v' ,
\end{align*}
which means that $F$ is an equilibrium distribution, i.e. $Q(F)=0$. Moreover, $F$ is symmetric, positive and normalised to 1, that is
\begin{align*}
F (-v) = F (v) >0 \quad \mathrm{for \ all\ } v \in \mathds{R}^3 \quad \mathrm{and} \quad \int_{\mathds{R}^3} F(v) \, \d v =1. 
\end{align*}

Before stating the next assumption, we recall the definition of slowly varying functions. A measurable function $\ell:(0, \infty)\to(0, \infty)$ is called slowly varying (at infinity) if for all $a > 0$,
 $\lim _{x\to \infty }{\frac {\ell(ax)}{\ell(x)}}=1.$

\textbf{Assumption (B1)} There exists $\alpha >0$ and a slowly varying function $\ell$ such that
\begin{align*}
F (v) = F_0(v) \ell(|v|), \quad \mathrm{with} \quad |v|^{\alpha + 3} F_0(v) \longrightarrow \kappa_0\in (0,\infty) \quad \mathrm{as} \ |v|\to \infty .
\end{align*}

\textbf{Assumption (B2)} There exists $\beta \in \mathds{R}$ and a positive constant $\nu_0$ such that
\begin{align*}
|v|^{-\beta} \nu(v) \longrightarrow \nu_0 \quad \mathrm{as} \ |v|\to \infty .
\end{align*}

\textbf{Assumption (B3)} We assume that there exists a constant $M$ such that 
 \begin{align*}
\int_{\mathds{R}^3} F(v') \frac{\nu (v)}{ b(v,v')} \ \d v' + \left(\int_{\mathds{R}^3} \frac{F(v')}{\nu(v')} \frac{ b^2(v,v')}{\nu^2(v)} \ \d v' \right)^{\frac{1}{2}} \le M \quad \mathrm{for \ all\ } v\in \mathds{R}^3,
\end{align*}
with $b(v,v') := \sigma (v,v') F^{-1} (v).$ 

\begin{proposition}
    The linear Boltzmann collision operator \eqref{LBO} satisfies the Assumptions $(A_1 - A_2)$ and $(B_1 - B_3)$ above with $\alpha = q-3>0$, $\beta = \frac{n-5}{n-1}<1$ and the slowly varying function $\ell = 1$.
\end{proposition}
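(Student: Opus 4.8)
The plan is to verify $(A_1 - A_2)$ and $(B_1 - B_3)$ one condition at a time, using throughout the Carleman form \eqref{LBO} of the operator together with the asymptotics of the gain term \eqref{10.2} and of the loss term \eqref{nu}. For $(A_1)$: non-negativity of $\sigma(v_1,v_1')$ and of $\nu$ is immediate from $B\ge 0$, $g_0\ge 0$. Local integrability of $\sigma$ follows by separating the diagonal: near $v_1'=v_1$ the relevant relative velocity obeys $|v_1-v_1'|=\cos\Theta\,|2v_1-v_1'-v_2'|$ with $\Theta\le\frac{\pi}{2}-\sigma$, so it is comparable to $|v_1-v_1'|$ and lies in the range where $B$ is bounded by continuity (cf. Proposition \ref{P3.2}); since the $v_2'$-domain $E_{v_1,v_1'}$ has two-dimensional measure $O(|v_1-v_1'|^2)$, the factor $2/|v_1-v_1'|^2$ is absorbed and $\sigma$ stays bounded near the diagonal, while away from it the explicit computation of Subsection \ref{S5.1} gives finiteness. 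Local integrability and positivity of $\nu$ follow the same way from \eqref{nu}, and the symmetry $\nu(-v)=\nu(v)$ is obtained from $\nu(v)=\int_{\mathbb{S}^2}\int_{\R^3}g_0(v_1')B(v-v_1',\omega)\,\d v_1'\,\d\omega$ by the substitution $v_1'\mapsto -v_1'$, using that $g_0$ is radial, that $B$ depends on the relative velocity only through its modulus and the scattering angle, and that one integrates over all of $\mathbb{S}^2$.

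For $(A_2)$: the operator $Q=K-\nu$ is a conservative linear Boltzmann generator, $\int_{\R^3}Q(f)\,\d v=0$, and $K$ is relatively compact with respect to multiplication by $\nu$ in view of the kernel estimates above, so $Q$ admits a normalised equilibrium $F\ge 0$ with $\int_{\R^3}F\,\d v=1$; by the rotational invariance of $\sigma$ and $\nu$ it may be taken radial, whence $F(-v)=F(v)>0$. The decisive point is the tail of $F$: feeding \eqref{10.2} and \eqref{nu} into the identity $\nu(v_1)F(v_1)=K(F)(v_1)$ forces $\tilde{C}_K|v_1|^{\frac{n-5}{n-1}}F(v_1)\sim C_{K,a}|v_1|^{-q+\frac{n-5}{n-1}}$, i.e. $F(v_1)\sim\frac{C_{K,a}}{\tilde{C}_K}|v_1|^{-q}$ as $|v_1|\to\infty$; this is self-consistent, being exactly a power-law tail of the same order as $g_0$ (which is what makes \eqref{10.2} applicable with $f=F$), and it shows $F$ is radial, unimodal and has a finite first moment since $q>4$. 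A power count then gives $F\in L^1(\nu)$, because at infinity $F\nu\sim c|v|^{-q+\frac{n-5}{n-1}}$ is integrable on $\R^3$ as $q>3+\frac{n-5}{n-1}$ (which holds since $q>4$ and $\frac{n-5}{n-1}\le0$), and $|v|^2\nu^{-1}F$ is locally integrable because on compact sets $\nu$ is continuous and bounded below away from $0$ while $F\in L^1_{\mathrm{loc}}$.

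For $(B_1)$ and $(B_2)$: take $\ell\equiv1$ (trivially slowly varying), $F_0=F$, $\alpha=q-3>0$ and $\kappa_0:=C_{K,a}/\tilde{C}_K\in(0,\infty)$; then $|v|^{\alpha+3}F_0(v)=|v|^qF(v)\to\kappa_0$ by the tail of $F$ just obtained. For $(B_2)$ set $\beta=\frac{n-5}{n-1}$ and $\nu_0=\tilde{C}_K>0$; then $|v|^{-\beta}\nu(v)\to\nu_0$ is exactly \eqref{nu}, and $\beta=\frac{n-5}{n-1}\le0<1$ for $n\in(2,5]$. For $(B_3)$, with $b(v,v')=\sigma(v,v')F(v)^{-1}$ one has $\int_{\R^3}F(v')\tfrac{\nu(v)}{b(v,v')}\,\d v'=\nu(v)F(v)\int_{\R^3}\tfrac{F(v')}{\sigma(v,v')}\,\d v'$ and $\big(\int_{\R^3}\tfrac{F(v')}{\nu(v')}\tfrac{b(v,v')^2}{\nu(v)^2}\,\d v'\big)^{1/2}=\tfrac{1}{\nu(v)F(v)}\big(\int_{\R^3}\tfrac{F(v')\,\sigma(v,v')^2}{\nu(v')}\,\d v'\big)^{1/2}$; I would bound each uniformly in $v$ by splitting the $v'$-integral at $|v'-v|=1$: for $|v'-v|\le1$ convergence follows from the continuity (and non-degeneracy) of $B$ at small relative velocities, while for $|v'-v|\ge1$ the power-law bounds $\sigma(v,v')\lesssim|v|^{-q}|v-v'|^{\frac{n-5}{n-1}}$ with a matching lower bound, $\nu(v)\sim\tilde{C}_K|v|^{\frac{n-5}{n-1}}$ and $F\sim\kappa_0|v|^{-q}$ make all powers of $|v|$ cancel and leave convergent $v'$-integrals — here the standing assumption $q<\frac{4n}{n-1}$, equivalently $\beta<5-q$, is precisely what keeps those integrals finite and the constant $v$-uniform.

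The conditions $(A_1)$, $(B_1)$, $(B_2)$ are essentially bookkeeping once \eqref{10.2} and \eqref{nu} are in hand. The \emph{main obstacle} is $(B_3)$ together with the input it shares with $(A_2)$: a sharp two-sided control of $\sigma(v,v')$ that is uniform in both arguments — not just the $|v_1|\to\infty$ asymptotics of Subsection \ref{S5.1}, but also its behaviour near the diagonal $v'=v$ and for one variable large while the other stays bounded, including the lower bounds needed because $\sigma$ occurs in denominators; both the existence of the equilibrium $F$ and the determination of its precise tail in $(A_2)$ rest on the same analysis.
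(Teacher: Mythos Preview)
Your outline is largely correct and, for $(A_1)$, $(B_1)$ and $(B_2)$, it matches the paper almost verbatim: non-negativity and symmetry from the structure of $g_0$ and $B$; the choice $\ell\equiv1$, $\alpha=q-3$, $\kappa_0=C_{K,a}/\tilde C_K$ via the tail computation $|v_1|^qF(v_1)\to\kappa_0$ obtained from $\nu F=K(F)$ and \eqref{10.2}, \eqref{nu}; and $\beta=\frac{n-5}{n-1}$, $\nu_0=\tilde C_K$ directly from \eqref{nu}, with the check $\beta<5-q\Leftrightarrow q<\frac{4n}{n-1}$.

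Two points diverge from the paper. For $(A_2)$ you invoke ``$K$ relatively compact with respect to $\nu$'' and conclude existence of a positive equilibrium; the paper makes this precise by passing to $G:=(1+\nu)F$, rewriting $K(F)=\nu F$ as a fixed-point equation $G=T(G)$ for a positive operator $T$ on $L^1$, and then applying the Krein--Rutman theorem (citing \cite{DegGP00}) to obtain a radial, positive, unimodal $F$. Your compactness remark points in the right direction but does not yet supply the concrete operator and the theorem that closes the argument. For $(B_3)$ your plan is a direct power-count after splitting at $|v-v'|=1$; the paper instead exploits the equilibrium identity $\nu(v)F(v)=K(F)(v)$ to rewrite the first term as $\dfrac{K(F)(v)}{g_0(v)}\displaystyle\int_{\R^3}\frac{F(v')}{\int_{\mathbb S^2}B(v-v',\omega)\,\d\omega}\,\d v'$, then uses the explicit lower bound on $B$ from Corollary \ref{CorCOB} together with Proposition \ref{2.4.} and \eqref{10.2} to get a clean uniform constant. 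This avoids the delicate step in your route, namely producing a \emph{lower} bound on the full Carleman kernel $\sigma(v,v')$ (needed because $\sigma$ appears in a denominator), which does not follow from the one-sided asymptotics in Subsection \ref{S5.1} alone.
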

\begin{proof}
Assumption $(A_1)$: It is being satisfied by the way the cross-section $\sigma$ is defined and the fact that $g_0$ is a radial, non negative function. 

Assumption $(A_2)$: 
This follows directly from Proposition \ref{prop:equili}.

Assumption $(B_1)$:
 We know that the function $F$ is an equilibrium distribution of the linear Boltzmann collision operator \eqref{LBO}, i.e. $\nu(v_1)F(v_1) = K(F)(v_1)$.
Additionally, by the computations in subsections \ref{S5.1} and \ref{S5.2} above we have that the behaviour of $F$ for large velocities is
 \begin{align}\label{beh:F}
|v_1|^q F(v_1) = |v_1|^q \frac{K(F)(v_1)}{\nu(v_1)} \to \frac{16C_2C_S}{\pi^2} \quad as \ |v_1| \to \infty.
 \end{align}
Now, we aim to find the values of $\alpha$ and $\beta$ in our case, in order to be able to use Theorem \ref{Thm:MMM}. For $(B_1)$, choosing $\ell =1$, we need that the behaviour of the equilibrium distribution $F$ for large velocities is
 \begin{align}\label{beh:FM}
|v_1|^{3+\alpha } F(v_1) = |v_1|^{3+\alpha } \frac{K(F)(v_1)}{\nu(v_1)} \to \kappa_0 \quad as \ |v_1| \to \infty.
 \end{align}
By equating \eqref{beh:F} and \eqref{beh:FM}, we take that $\alpha = q-3 >0$, which implies $q>3$ (actually, that $q = \alpha +3$, $\alpha >0$) which is true since $g_0 \in L^1(\R^3)$ and also that $\kappa_0 = \frac{16C_2C_S}{\pi^2}$. 

Assumption $(B_2)$: By the behaviour of $\nu(v_1)$ for large velocities $v_1$, in \eqref{nu}, choosing $\beta = \frac{n-5}{n-1} <1$, since $n>1$ and the positive constant $\nu_0 = \tilde{C}$. Furthermore,
\begin{align*}
&\beta < \min \{ \alpha ; 2-\alpha \} = \min \{ q-3 ; 5-q \} = 5-q \\
\Leftrightarrow & \beta = \frac{n-5}{n-1} < 5-q
\Leftrightarrow  q< \frac{4n}{n-1}, \ \textrm{since} \ n>1.
\end{align*}
This yields an admissible  choice for   $\alpha$, $\beta$ and $q$. 

Assumption $(B_3)$: We start with the integral
\begin{align*}
\nu(v) \int_{\mathds{R}^3} \frac{F(v')}{b(v,v')} \, \d v' 
&= \nu(v) F(v) \int_{\mathds{R}^3}
\frac{F(v')}{\int_{\mathbb{S}^2} g_0(v) \mathcal{B}(v-v', \omega)\, \d \omega} \, \d v'\\
&=
\frac{K(F)(v)}{g_0(v)} 
\int_{\mathds{R}^3}
\frac{F(v')}{\int_{\mathbb{S}^2} \mathcal{B}(v-v', \omega)\, \d \omega} \, \d v',
\end{align*}
where we used the fact that $F$ is an equilibrium distribution, i.e. $K(F)(v) = \nu(v)F(v)$ and the cross section $\sigma$ by \eqref{LBO}. Now, by Corollary \ref{CorCOB} we find the lower bound for the integral 
\begin{align*}
\int_{\mathbb{S}^2} \mathcal{B}(v-v', \omega)\, \d \omega
&\ge
\frac{\pi^3}{8} \frac{(2K)^{\frac{2}{n-1}}}{a^2}  |v - v'|^{\frac{n-5}{n-1}}.
\end{align*}
Therefore, by Proposition \ref{2.4.} and the behaviour of $K(F)(v)$ for large velocities \eqref{10.2}
\begin{align*}
\nu(v) \int_{\mathds{R}^3} \frac{F(v')}{b(v,v')} \, \d v' 
&\le 
8\pi C_2 C_S \frac{|v|^{-q + \frac{n-5}{n-1}} + o(|v|^{ -q + \frac{n-5}{n-1} })}{|v|^{-q}} (|v|^{-\frac{n-5}{n-1}} + o(|v|^{-\frac{n-5}{n-1}}) )\\
& = 8\pi C_2 C_S.
\end{align*}
The computations for finding an upper bound for the integral 
$$\left( \int_{\mathds{R}^3} \frac{F(v')}{\nu(v')} \frac{ b^2(v,v')}{\nu^2(v)} \ \d v'\right)^{\frac{1}{2}}$$
follows similarly.
\end{proof}
We are now ready to state the theorem of \cite{Mellet2011} which is the following.
\begin{theorem}{\cite[Theorem 3.2]{Mellet2011}} \label{Thm:MMM}
Assume that Assumptions $(A_1-A_2)$ and $(B_1 - B_3)$ hold with $\alpha>0$ and $\beta < \min \{ \alpha ; 2-\alpha\}$. Define 
$$\gamma := \frac{\alpha - \beta}{1-\beta}, \quad \text{and} \quad \theta(\epsilon) := \ell(\epsilon^{-\frac{1}{1-\beta}})\epsilon^{\gamma}.$$
Assume furthermore that $f_0 \in L^2(F^{-1})$ and let $f^{\epsilon}$ be the solution of \eqref{LBE:MMM}, with that choice of $\theta$ and initial data $f_0$. Then, $(f^{\epsilon})$ converges in $L^{\infty}(0,T;L^2(\mathds{R}^3 \times \mathds{R}^3))-$weak$^*$ to a function $\rho(t,x) F(v)$, where $\rho(t,x)$ is the unique solution of the fractional diffusion equation of order $\gamma$, \eqref{FDE}.
\end{theorem}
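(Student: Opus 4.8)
The plan is to prove Theorem~\ref{Thm:MMM}, following the method of Mellet, Mischler and Mouhot: combine uniform a priori estimates in the weighted Hilbert space $L^2(F^{-1})$ with a weak-formulation argument in which the test functions are obtained by solving an auxiliary stationary transport problem, the point being that the classical Chapman--Enskog (Hilbert expansion) procedure breaks down in the heavy-tailed regime. Throughout, write the rescaled linear Boltzmann equation abstractly as $\theta(\eps)\,\partial_t f^\eps + \eps\, v\cdot\nabla_x f^\eps = \calL(f^\eps)$ with $\calL = K - \nu\,\mathrm{Id}$, $f^\eps(0,\cdot) = f_0$, and set $\rho^\eps(t,x) := \int_{\R^3} f^\eps(t,x,v)\,\d v$.

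\textbf{Step 1 (a priori estimates and weak compactness).} By $(A_1)$--$(A_2)$, $\calL$ is self-adjoint and non-positive on $L^2(F^{-1})$ with $\ker\calL = \mathrm{span}\{F\}$; Assumption $(B_3)$ supplies the bounds on the kernel $b = \sigma F^{-1}$ needed for the coercivity (spectral-gap) estimate $-(\calL f, f)_{L^2(F^{-1})} \ge c\,\| f - \langle f\rangle\, F\|_{L^2(\nu F^{-1})}^2$, where $\langle f\rangle := \int_{\R^3} f\,\d v$. Multiplying the equation by $f^\eps F^{-1}$, integrating in $(x,v)$, and using that the transport term is skew-adjoint gives the exact identity
\[
\tfrac12\,\|f^\eps(t)\|_{L^2(F^{-1})}^2 + \tfrac{1}{\theta(\eps)}\int_0^t \big(-\calL f^\eps, f^\eps\big)_{L^2(F^{-1})}\,\d s = \tfrac12\,\|f_0\|_{L^2(F^{-1})}^2 .
\]
Hence $\{f^\eps\}$ is bounded in $L^\infty(0,T;L^2(F^{-1}))$ and $\int_0^T\| f^\eps - \rho^\eps F\|_{L^2(\nu F^{-1})}^2\,\d t \le C\,\theta(\eps)\to 0$. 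Extracting a subsequence, $f^\eps\weaklystar f$ in $L^\infty(0,T;L^2(F^{-1}))$, and the vanishing of the dissipation forces $f = \rho(t,x)F(v)$ with $\rho\in L^\infty(0,T;L^2_x)$ and $\rho^\eps\weaklystar\rho$.

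\textbf{Step 2 (identifying the limit --- the core).} Integrating the equation in $v$ and using $\int\calL f^\eps\,\d v = 0$ gives the conservation law $\theta(\eps)\,\partial_t\rho^\eps + \eps\,\nabla_x\!\cdot J^\eps = 0$, $J^\eps := \int_{\R^3} v\, f^\eps\,\d v$, so the whole problem is to identify $\lim_{\eps\to 0}\tfrac{\eps}{\theta(\eps)}J^\eps$ in the sense of distributions. Since $\alpha < 2$ and $\beta < 2-\alpha$, the field $v$ is \emph{not} in the range of $\calL$ in the natural weighted space, so $J^\eps$ cannot be recovered by inverting $\calL$. The device is to test the weak formulation against $\psi^\eps(t,x,v) = \varphi(t,x) + \tfrac{\eps}{\theta(\eps)}\chi^\eps(t,x,v)$, where for $\varphi\in C^\infty_c$ the corrector solves the auxiliary stationary problem $\nu(v)\chi^\eps + \eps\, v\cdot\nabla_x\chi^\eps = \nu(v)F(v)\varphi$, i.e. explicitly
\[
\chi^\eps(t,x,v) = \nu(v)F(v)\int_0^\infty e^{-\nu(v)s}\,\varphi(t,x-\eps v s)\,\d s ,
\]
which stays uniformly bounded in the relevant norms and is arranged so that $(\calL^\ast + \eps\,v\cdot\nabla_x)\psi^\eps$ reproduces, up to explicit lower-order terms, exactly the quantity dual to the flux. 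A Fourier transform in $x$ reduces the matching, for each mode $k$, to a velocity integral of $F(v)$ against the transfer function $\tfrac{\eps\,(v\cdot k)}{\nu(v) - i\,\eps\,(v\cdot k)}$ of this relaxation; feeding in the tails $(B_1)$--$(B_2)$ and performing the self-similar change of variables that renders $\eps(v\cdot k)$ and $\nu(v)$ simultaneously of unit size, its mass concentrates on the window $|v|\sim\eps^{-1/(1-\beta)}$. The precise scaling $\theta(\eps) = \ell\!\big(\eps^{-1/(1-\beta)}\big)\eps^{\gamma}$, $\gamma = \tfrac{\alpha-\beta}{1-\beta}$, is exactly the one for which all powers of $\eps$ cancel and the symbol tends to a constant multiple of $|k|^{\gamma}$; tracking the constant gives
\[
\kappa = \frac{\kappa_0\nu_0}{1-\beta}\int_{\R^3}\frac{w_1^{2}}{\nu_0^{2}+w_1^{2}}\,\frac{\d w}{|w|^{3+\gamma}} ,
\]
with the factor $\tfrac{w_1^{2}}{\nu_0^{2}+w_1^{2}}$ originating from $\int_0^\infty e^{-(\nu_0 - iw_1)s}\,\d s = (\nu_0 - iw_1)^{-1}$ and the homogeneity of the remaining integral producing $|k|^{\gamma}$. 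Passing to the limit in the weak formulation yields $\partial_t\rho + \kappa(-\Delta_x)^{\gamma/2}\rho = 0$ in $\mathcal{D}'\big((0,\infty)\times\R^3\big)$, with $\rho(0,\cdot) = \rho_0 := \int_{\R^3} f_0\,\d v$ recovered from the initial term $\theta(\eps)\int f_0\,\psi^\eps(0)$.

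\textbf{Step 3 (conclusion) and main obstacle.} Since the fractional heat equation \eqref{FDE} of order $\gamma\in(0,2)$ with $L^2$ data is well posed, the limit $\rho$ does not depend on the extracted subsequence, so the whole family converges and $f^\eps\weaklystar\rho F$ in $L^\infty(0,T;L^2(\R^3\times\R^3))$-weak$^\ast$; a routine argument handles the initial layer and time-dependent test functions. \textbf{The hard part is Step~2}: in the heavy-tailed regime the standard diffusion-limit machinery fails because $v$ is not $\calL$-invertible in the relevant weighted space, so one must construct and control the auxiliary corrector $\chi^\eps$ uniformly and justify the separation of velocity scales --- bulk velocities $|v|\ll\eps^{-1/(1-\beta)}$ contribute nothing in the limit, while the self-similar window $|v|\sim\eps^{-1/(1-\beta)}$ generates the entire fractional operator --- which is exactly where Assumptions $(B_1)$--$(B_3)$ and the fine tuning of $\theta(\eps)$ enter.
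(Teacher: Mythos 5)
This statement is not proved in the paper at all: it is imported verbatim from Mellet--Mischler--Mouhot \cite[Theorem 3.2]{Mellet2011}, and your sketch reproduces precisely the strategy of that original proof (uniform $L^2(F^{-1})$ bound with dissipation of order $\theta(\eps)$ forcing $f^\eps\weaklystar\rho F$, then the auxiliary test function solving $\nu\chi^\eps+\eps\,v\cdot\nabla_x\chi^\eps=\nu F\varphi$, Fourier reduction, and the self-similar velocity window $|v|\sim\eps^{-1/(1-\beta)}$ producing the symbol $\kappa|k|^\gamma$ with exactly the stated constant $\kappa$), so it is correct and takes essentially the same route as the cited source. One small caution: \cite{Mellet2011} does not assume detailed balance, so $\calL$ need not be self-adjoint on $L^2(F^{-1})$; the energy inequality and the coercivity controlling $\|f^\eps-\rho^\eps F\|_{L^2(\nu F^{-1})}$ are derived there directly from $(A_1)$--$(A_2)$ and $(B_3)$, and your Step 1 should be phrased that way rather than via self-adjointness.
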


In our case, we have chosen $\ell =1$, so that $\theta(\epsilon)$ becomes explicitly $\theta(\epsilon) = \epsilon ^{\gamma}$. Furthermore, the admissible choices for $\alpha$ and $\beta$ are \[ \alpha = q-3, \ \text{for} \ q\in(4, \frac{4n}{n-1}) \ \text{and} \ \beta=\frac{n-5}{n-1}, \ \text{for} \ n\in (3,5]. \]
Therefore, \[ \gamma = \frac{(q-4)(n-1)}{4} +1,\ \text{for} \ n\in (3,5], \ q\in (4, \frac{4n}{n-1}). \]
In this step we introduce the time scale in order to derive the rescaled linear Boltzmann equation of \cite[equation (3)]{Mellet2011}, which is the equation
\begin{align}\label{LBE:MMM}
    \epsilon^{\gamma} \partial_{\tau}f
    + \epsilon v \cdot \nabla_x f = Q(f),
\end{align}
with $\epsilon>0$ a small parameter different than the diameter $\varepsilon$ of each particle,
from the linear Boltzmann equation \eqref{LBE}, which is the equation
\begin{align*}
  \partial_{t}f
    + v \cdot \nabla_x f =c Q(f),
\end{align*}
where $c$ is a parameter which is the inverse of the mean free path of the microscopic particles.  
\begin{lemma}\label{timescale}
The macroscopic time scale for deriving the linear Boltzmann equation \eqref{LBE:MMM} from equation \eqref{LBE} is
\[ \tau = \epsilon^{\gamma-1} t
\quad \text{with} \quad  c = \frac{1}{\epsilon}.\]
\end{lemma}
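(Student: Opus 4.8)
The plan is to derive the rescaled equation \eqref{LBE:MMM} directly from \eqref{LBE} by a change of the time variable together with the identification $c = 1/\epsilon$, matching coefficients term by term. First I would start from the linear Boltzmann equation in the form $\partial_t f_t + v\cdot\nabla_x f_t = c\, Q[f_t]$ and introduce a new macroscopic time $\tau$ related to the microscopic time $t$ by an affine rescaling $\tau = \lambda(\epsilon)\, t$ for some rescaling factor $\lambda(\epsilon)>0$ to be determined. Writing $\tilde f(\tau,x,v) := f_{t}(x,v)$ with $t = \tau/\lambda(\epsilon)$, the chain rule gives $\partial_t f_t = \lambda(\epsilon)\,\partial_\tau \tilde f$, so the equation becomes $\lambda(\epsilon)\,\partial_\tau \tilde f + v\cdot\nabla_x \tilde f = c\, Q[\tilde f]$.

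Next I would divide through by $c$ to obtain $\frac{\lambda(\epsilon)}{c}\,\partial_\tau \tilde f + \frac{1}{c}\, v\cdot\nabla_x \tilde f = Q[\tilde f]$, and then compare coefficients with the target equation $\theta(\epsilon)\,\partial_\tau f + \epsilon\, v\cdot\nabla_x f = Q(f)$ from \cite{Mellet2011}. Matching the transport coefficient forces $\frac{1}{c} = \epsilon$, i.e. $c = \frac{1}{\epsilon}$, which is consistent with the physical interpretation of $c$ as the inverse mean free path. Matching the time-derivative coefficient then forces $\frac{\lambda(\epsilon)}{c} = \theta(\epsilon)$, i.e. $\lambda(\epsilon) = c\,\theta(\epsilon) = \frac{\theta(\epsilon)}{\epsilon}$, whence the macroscopic time scale is $\tau = \frac{\theta(\epsilon)}{\epsilon}\, t$, exactly as claimed in the statement of Lemma \ref{timescale}.

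Finally I would record that $\theta(\epsilon)$ here is the quantity defined in Theorem \ref{Thm:MMM}, namely $\theta(\epsilon) = \ell(\epsilon^{-1/(1-\beta)})\epsilon^{\gamma}$ with $\gamma = \frac{\alpha-\beta}{1-\beta}$; since in our setting $\ell = 1$, $\alpha = q-3$ and $\beta = \frac{n-5}{n-1}$ by the preceding proposition, this is $\theta(\epsilon) = \epsilon^{\gamma(q)}$ with $\gamma(q) = \frac{(q-3) - \frac{n-5}{n-1}}{1 - \frac{n-5}{n-1}}$, and one should note $\tau\to\infty$ corresponds to $t\to\infty$ on the appropriate scale since $\theta(\epsilon)/\epsilon\to 0$. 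This last bookkeeping, reconciling the two different small parameters $\epsilon$ (the hydrodynamic parameter) and $\varepsilon$ (the particle diameter) and checking that the admissibility ranges of $q$, $\alpha$, $\beta$ are respected, is the only genuinely delicate point; the rescaling argument itself is a routine change of variables. I would close by remarking that this lemma, combined with Theorem \ref{thm1} (valid on time scales $\varepsilon^{-k}$) and Theorem \ref{Thm:MMM}, is what permits passing from the particle system through the linear Boltzmann equation to the fractional diffusion equation \eqref{FDE}, i.e. it supplies the choice of $\tilde\beta$ referenced in the statement of Theorem \ref{thm2}.
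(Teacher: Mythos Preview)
Your proposal is correct and follows essentially the same route as the paper: introduce an affine time rescaling, divide by $c$, and match coefficients with \eqref{LBE:MMM} to read off $c=1/\epsilon$ and the time scaling factor. The only cosmetic difference is that the paper writes the substitution as $t=\tilde\beta\tau$ and solves for $\tilde\beta=\epsilon/\theta(\epsilon)$, whereas you write $\tau=\lambda(\epsilon)t$ and solve for $\lambda(\epsilon)=\theta(\epsilon)/\epsilon$; these are of course reciprocals of one another, and your additional remarks on the explicit form of $\theta(\epsilon)$ and the link to Theorem~\ref{thm2} are accurate supplementary context.
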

\begin{proof}

Start with the linear Boltzmann equation \eqref{LBE} and set the rescale time $\tilde{\beta}\tau := t$, then the equation becomes
\begin{align*}
    \frac{1}{\tilde{\beta}} \partial_{\tau}f + v \cdot \nabla_x f = c Q(f),
\end{align*}
which is equivalent to
\begin{align*}
    \frac{1}{\tilde{\beta}c} \partial_{\tau}f + \frac{1}{c} v \cdot \nabla_x f =  Q(f).
\end{align*}
If we equate this equation with the equation \eqref{LBE:MMM}, we get
\[c = \frac{1}{\epsilon} \quad \mathrm{and} \quad \tilde{\beta} = \epsilon^{1-\gamma}, \]
which concludes the proof of the lemma.
\end{proof}
Now we have all the tools we need to give the proof of the remained main theorem, which is a theorem about the full derivation of a fractional diffusion equation from a short range potential Rayleigh particle system, Theorem \ref{thm2}.
\begin{proof}[Proof of Theorem \ref{thm2}]

By Theorem \ref{thm1} we have that for any $t \in [0,T_{\varepsilon}]$ the distribution of the tagged particle $\hat{f}_t^{N}(x,v)$ converges in ${L^1(\R^3 \times \R^3)}$-norm to the solution of the linear Boltzmann equation $f (t,x,v)$. This implies the strong convergence in the space $L^{\infty}(0,T; L^1(\R^3 \times \R^3))$ which immediately implies the weak$^*$ convergence in the same space. That is, for any test function $\phi \in L^1(0,T; L^{\infty}(\R^3 \times \R^3))$
\begin{align*}
   \int_{0}^{T} \int_{\R^3 \times \R^3} |(\hat{f}^{N}(t,x,v) - f(t,x,v) ) \cdot \phi(t,x,v) | \d x \d v \d t \to 0,
\end{align*}
as $N\to \infty$, which by the Boltzmann-Grad limit is equivalent to $\varepsilon \to 0$. Now by the Theorem \ref{Thm:MMM} we know that the rescaled solution of the linear Boltzmann equation $f^{\epsilon}(\epsilon^{\gamma}t, \epsilon x, v)$ converges weakly$^*$ in $ L^{\infty}(0,T; L^2_{F^{-1}}(\R^3 \times \R^3))$ to a function $ \rho(t,x) F(v)$, where $\rho (t,x)$ is the unique solution of the fractional diffusion equation \eqref{FDE} and the function $F$ is an equilibrium distribution of the linear Boltzmann operator \eqref{LBO}, i.e. $Q(F) = 0$. That is to say, for any test function $\psi \in L^1(0,T; L^2_{F^{-1}} (\R^3 \times \R^3))$
\begin{align*}
  \int_{0}^{T} \int_{\R^3 \times \R^3}  |(f^{\epsilon}(\epsilon^{\gamma} t, \epsilon x, v) - \rho(t,x)F(v) ) \cdot \psi(t,x,v) | \d x \d v \d t \to 0,
\end{align*}
as the parameter $\epsilon \to 0$ which is equivalent to $c \to \infty.$
Therefore, by combining these two theorems and using the triangle inequality, one can show that for any test function $\tilde{\varphi} \in L^1( 0,T; L^2_{F^{-1}} \cap L^{\infty} (\R^3 \times \R^3))$
\begin{align*}
 \int_{0}^{T} \int_{\R^3 \times \R^3}   |(\hat{f}^{N}(\epsilon^{1-\gamma} \tau,x,v) - \rho(\tau,x)F(v) ) \cdot \tilde{\varphi}(\tau,x,v) | \d x \d v \d \tau \to 0,
\end{align*}
in the limit $N \to \infty$, with $c = N \varepsilon^2 \to \infty.$
 \end{proof}

 \subsection*{Acknowledgements}
This work was supported through  The Leverhulme Trust research project grant  RPG-2020-107.  KM would like to thank the Isaac Newton Institute for Mathematical Sciences, Cambridge, for support and hospitality during the programme \emph{Frontiers in kinetic theory}  where work on this paper was undertaken. This work was supported by EPSRC grant no EP/R014604/1. We thank Josephine Evans, Milos Tasic and Florian Theil for helpful discussions.

\subsection*{Data Availibility}
Data sharing not applicable to this article as no datasets were generated or analysed during the current study.

\subsection*{Conflict of interest}
The authors have no relevant financial or non-financial interests to disclose.

\end{document}